\newtheorem{theorem}{Theorem}[section]
\newtheorem{lemma}[theorem]{Lemma}
\newtheorem{corollary}[theorem]{Corollary}
\newtheorem{proposition}[theorem]{Proposition}
\newcounter{maintheorem}
\theoremstyle{remark}
\newtheorem{remark}[theorem]{Remark}
\theoremstyle{definition}
\numberwithin{equation}{section}
\renewcommand{\tocsection}[3]{%
\indentlabel{\@ifnotempty{#2}{\bfseries\ignorespaces#1 #2\quad}}\bfseries#3}
\renewcommand{\tocsubsection}[3]{%
\indentlabel{\@ifnotempty{#2}{\ignorespaces#1 #2\quad}}#3}
\newcommand\@dotsep{4.5}
\def\@tocline#1#2#3#4#5#6#7{\relax
\ifnum #1>\c@tocdepth 
\else
\par \addpenalty\@secpenalty\addvspace{#2}%
\begingroup \hyphenpenalty\@M
\@ifempty{#4}{%
\@tempdima\csname r@tocindent\number#1\endcsname\relax
}{%
\@tempdima#4\relax
}%
\parindent\z@ \leftskip#3\relax \advance\leftskip\@tempdima\relax
\rightskip\@pnumwidth plus1em \parfillskip-\@pnumwidth
#5\leavevmode\hskip-\@tempdima{#6}\nobreak
\leaders\hbox{$\m@th\mkern \@dotsep mu\hbox{.}\mkern \@dotsep mu$}\hfill
\nobreak
\hbox to\@pnumwidth{\@tocpagenum{\ifnum#1=1\bfseries\fi#7}}\par
\nobreak
\endgroup
\fi}
\renewcommand\csname r@tocindent0\endcsname{0pt}
\def\l@subsection{\@tocline{2}{0pt}{2.5pc}{5pc}{}}
\newcommand{\sna}{\operatorname{SNA}}
\newcommand{\bbr}{\mathbb{R}}
\newcommand{\bbn}{\mathbb{N}}
\newcommand{\N}{\mathbb{N}}
\newcommand{\calf}{\mathcal{F}}
\newcommand{\fm}{\calf(M)}
\newcommand{\na}{\operatorname{NA}}
\newcommand{\pna}{\operatorname{PNA}}
\newcommand{\PNA}{\operatorname{PNA}}
\newcommand{\ldira}{\operatorname{LDirA}}
\newcommand{\lip}{\operatorname{Lip}_0}
\newcommand{\Lip}{\operatorname{Lip}_0}
\newcommand{\der}{\operatorname{Der}}
\newcommand{\sign}{\operatorname{sign}}
\newcommand{\eps}{\varepsilon}
\DeclareMathOperator{\supp}{supp}
\DeclareMathOperator{\spann}{span}
\DeclareFontFamily{U}{mathx}{}
\DeclareFontShape{U}{mathx}{m}{n}{<-> mathx10}{}
\DeclareSymbolFont{mathx}{U}{mathx}{m}{n}
\DeclareMathAccent{\widehat}{0}{mathx}{"70}
\DeclareMathAccent{\widecheck}{0}{mathx}{"71}
\DeclareMathOperator{\NA}{NA}
\newcommand{\neib}{\operatorname{Neib}}
\renewcommand{\epsilon}{\varepsilon}
\renewcommand{\subset}{\subseteq}
\newlength\Colsep
\begin{document}
\title[Linear structures of norm-attaining Lipschitz functions]{Linear structures of norm-attaining Lipschitz functions and their complements}

\author[Choi]{Geunsu Choi}
\address[Choi]{Department of Mathematics Education, Sunchon National University, 57922 Jeonnam, Republic of Korea \newline
\href{http://orcid.org/0000-0002-4321-1524}{ORCID: \texttt{0000-0002-4321-1524}}}
\email{\texttt{gschoi@scnu.ac.kr}}

\author[Jung]{Mingu Jung} 
\address[Jung]{June E Huh Center for Mathematical Challenges, Korea Institute for Advanced Study, 02455 Seoul, Republic of Korea\newline
\href{https://orcid.org/0000-0003-2240-2855}{ORCID: \texttt{0000-0003-2240-2855}}}
\email{jmingoo@kias.re.kr}

\author[Lee]{Han Ju Lee}
\address[Lee]{Department of Mathematics Education, Dongguk University, 04620 Seoul, Republic of Korea \newline
\href{https://orcid.org/0000-0001-9523-2987}{ORCID: \texttt{0000-0001-9523-2987}}}
\email{\texttt{hanjulee@dgu.ac.kr}}

\author[Rold\'an]{\'Oscar Rold\'an}
\address[Rold\'an]{Department of Mathematics Education, Dongguk University, 04620 Seoul, Republic of Korea \newline
\href{https://orcid.org/0000-0002-1966-1330}{ORCID: \texttt{0000-0002-1966-1330}}}
\email{\texttt{oscar.roldan@uv.es}}

\keywords{Lipschitz function, metric space, norm-attainment, linear subspaces}
\subjclass[2020]{Primary: 46B04;  Secondary: 46B20, 46B87, 54E50}

\date{\today}                                           


\begin{abstract}
We solve two main questions on linear structures of \textup{(}non-\textup{)}norm-attaining Lipschitz functions. First, we show that for every infinite metric space $M$, the set consisting of Lipschitz functions on $M$ which do not strongly attain their norm and the zero contains an isometric copy of $\ell_\infty$, and moreover, those functions can be chosen not to attain their norm as functionals on the Lipschitz-free space over $M$. Second, we prove that for every infinite metric space $M$, neither the set of strongly norm-attaining Lipschitz functions on $M$ nor the union of its complement with zero is ever a linear space. Furthermore, we observe that the set consisting of Lipschitz functions which cannot be approximated by strongly norm-attaining ones and the zero element contains $\ell_\infty$ isometrically in all the known cases. Some natural observations and spaceability results are also investigated for Lipschitz functions that attain their norm in one way but do not in another, for several norm-attainment notions considered in the literature.
\end{abstract}

\maketitle

\hypersetup{linkcolor=black}

\makeatletter \def\l@subsection{\@tocline{2}{0pt}{1pc}{5pc}{}} \def\l@subsection{\@tocline{2}{0pt}{3pc}{6pc}{}} \makeatother


\hypersetup{linkcolor=blue}

\section{Introduction}

A subset $A$ of a vector space is said to be \textit{lineable} if $A\cup\{0\}$ contains an infinite-dimensional linear space, and \textit{spaceable} if $A\cup\{0\}$ contains a \textit{closed} infinite-dimensional linear space. 
The study of lineability dates back to a result by B. Levine and D. Milman \cite{LM} in 1940, which states that the subset of all functions of bounded variation in $C[0,1]$ does not contain a closed infinite dimensional subspace. In this direction, V.I. Gurariy \cite{Gurariy66} first proved that the set of nowhere differentiable functions on $[0,1]$ is lineable. Afterwards, it was shown by V.P. Fonf, V.I. Gurariy, and M.I. Kadec \cite{FGK99} that such a set is also spaceable. To this day, 
plenty of research has been done on large linear structures of functions that exhibit pathological behavior or certain properties. For a complete background on lineability and spaceability, we refer the reader to the monograph \cite{ABPS} and the seminal papers \cite{AGS05, EGS14, GQ04}.

Throughout the paper, we respectively denote by $X^*$, $B_X$, and $S_X$ the dual space, the unit ball, and the unit sphere of a Banach space $X$ over the \textit{real} field $\mathbb{R}$. The symbol $\mathcal{L}(X,Y)$ stands for the Banach space of all bounded linear operators from $X$ to a Banach space $Y$. Recall that an operator $T \in \mathcal{L}(X,Y)$ {\it attains its norm}, or it is {\it norm-attaining}, if there exists $x_0 \in S_X$ such that $\|T\| = \|T(x_0)\|$. We denote by $\NA(X, Y)$ the subset of $\mathcal{L}(X, Y)$ of all bounded linear operators which attain their norm. When we deal with the scalar case, in order to simplify the notations, we use only the symbol $\NA(X)$ instead of $\NA(X,\bbr)$.

 In general, the set $\NA(X, Y)$  is not always a subspace of $\mathcal L(X,Y)$. For instance, $\NA(\ell_1)$ is nothing but the set $\{x \in \ell_{\infty}: \|x\|_{\infty} = \max_{n \in \N} |x_n| \}$, which is not a subspace of $\ell_\infty$. However, it is clear that if $X = Z^*$ is an infinite-dimensional dual Banach space, then $\na(X)$ is spaceable as $Z$ is contained in $\na(X)$ by the Hahn-Banach theorem, and if $X$ is reflexive, $\na(X)$ is a Banach space (isometrically isomorphic to $X^*$) by the result of R.C. James (see \cite{James57, James64}). Moreover, given a separable Banach space $X$, if $W$ is a closed separating subspace of $X^*$ such that $W \subseteq \NA(X)$, the space $W$ turns out to be an isometric predual of $X$ \cite{PP}. In 2001, G. Godefroy asked whether given a Banach space $X$ of dimension at least $2$, the set $\na(X)$ contains a $2$-dimensional linear space \cite[Problem III]{Godefroy01}, and this question was studied deeply in \cite{BG}. Finally, M. Rmoutil \cite{Rmoutil17} solved this question in the negative by proving that there is a renorming $X$ of $c_0$ due to C. Read (see \cite{Read18}) for which $\na(X)$ does not contain a $2$-dimensional linear space. For more results on lineability and spaceability in the context of norm-attaining operators, we refer to \cite{AAAG07, BG, FGMR, GPacheco08, GP2010, GP2015, PT}. 

  Similarly, for non-reflexive $X$, the set $X^*  \setminus \NA(X)$ consisting of all non-norm-attaining functionals on $X$ is also often spaceable.
  For instance, under certain natural conditions the sets $\mathcal{C}(K)^* \setminus \NA(\mathcal{C}(K))$ and $L_1(\mu)^* \setminus \NA(L_1(\mu))$ are spaceable \cite[Theorem 2.5, Theorem 2.7]{AAAG07}. However, note that there are non-reflexive Banach spaces such that $(X^*  \setminus \NA(X)) \cup\{0\}$ does not even contain $2$-dimensional linear spaces (see \cite[Example 3.1]{GPacheco08}). In the context of linear operators, D. Pellegrino and E. Teixeira \cite{PT} showed that, whenever $1 \leq p < \infty$ and $Y$ contains an isometric copy of $\ell_p$, the set $\mathcal{L}(X, Y) \setminus \NA(X,Y)$ is lineable whenever it is non-empty \cite[Proposition 7]{PT}. Recently, in \cite{DFJR23} it is proved among other results that the set $\mathcal{L}(c_0 (\Gamma), Y) \setminus \overline{\NA(c_0(\Gamma), Y)}$ is spaceable, where $\Gamma$ is an infinite set and $Y$ is a strictly convex renorming of $c_0(\Gamma)$. Let us note that the fact that $\mathcal{L}(c_0, Y) \setminus \overline{\NA(c_0, Y)}$ is non-empty was first observed in J. Lindenstrauss's seminal paper \cite{L}.

Before we turn to the context of norm-attaining Lipschitz functions, let us first recall some necessary definitions. Given a pointed metric space $M$ with the distinguished point $0$, let $\lip(M)$ denote the Banach space of real-valued  Lipschitz functions $f:M\rightarrow \mathbb{R}$ such that $f(0)=0$ endowed with the Lipschitz norm
\begin{equation}\label{def:lipnorm}
\|f\|:=\sup\left\{ |S(f,p,q)| :  (p,q) \in \widetilde{M}  \right\}, \quad \text{where } \, S(f,p,q): = \frac{f(q)-f(p)}{d(p,q)}
\end{equation} 
and we denote $\widetilde{A} := \{ (a, b) \in A^2 : a \neq b \}$ for a given set $A$. Note that the choice of the distinguished point $0\in M$ is irrelevant for our purposes, since if $0$ and $0'$ are two different points of $M$, the mapping $\phi:\lip(M)\rightarrow \operatorname{Lip}_{0'}(M)$ given that maps each $f\in\lip(M)$ to $f-f(0')$ is an isometry that preserves the norm and norm-attainment of every function. We say that a Lipschitz function $f\in\lip(M)$ \textit{strongly attains its norm} if the supremum in \eqref{def:lipnorm} is actually a maximum, and denote by $\sna(M)$ the set of strongly norm-attaining Lipschitz functions in $\lip(M)$. The notion of strong norm-attainment has been studied extensively in recent years \cite{CCGMR19, Chiclana22, CGMR21, Godefroy16, JMR23, KMS16}.

A natural element in $\lip(M)^*$ is the evaluation mapping $\delta_x$ at some point $x \in M$, i.e., $\delta_x (f) = f(x)$ for every $f \in \lip(M)$. The space $\calf(M)=\overline{\operatorname{span}}\{\delta_x:\, x\in M\} \subseteq \lip(M)^*$ is known to be an isometric predual of $\lip(M)$, called the \textit{Lipschitz-free space} (also known as Arens-Eells space or transportation cost space). One important universal property of Lipschitz-free space is linearization: given a Lipschitz function $f \in \lip (M)$, there is a unique bounded linear functional $T_f \in  \fm^*$ with $\|T_f\| = \|f\|$ satisfying that $T_f (\delta_x) = f(x)$ for every $x \in M$.  
It is clear that $f\in\lip(M)$ strongly attains its norm at a pair $(p,q)$ if and only if the corresponding bounded linear functional $T_f \in \fm^*$ attains its norm at $\frac{\delta_p - \delta_q}{d(p,q)}$. We will say that $f$ \textit{attains its norm as a functional} on $\fm$ if the corresponding functional $T_f$ attains its norm. For a solid background in Lipschitz functions and Lipschitz-free spaces, we refer the reader to \cite{Godefroy15, Weaver18}.

Investigating infinite-dimensional linear structures within sets of Lipschitz functions that strongly attain their norm has attracted considerable attention, as seen in studies \cite{AMRT23, CJLR2023, DMQR23, KR22}. For instance, if $M$ is an infinite metric space, then the set $\sna(M)$ contains an isomorphic copy of $c_0$ \cite[Main Theorem]{AMRT23}, and if, in addition, $M$ is not uniformly discrete, then $c_0$ is isometrically contained in $\sna(M)$ \cite[Theorem 4.2]{DMQR23}. However, there exist uniformly discrete metric spaces $M$ such that $c_0$ cannot be isometrically contained in $\sna(M)$ \cite[Theorems 4.1 and 4.4]{DMQR23}. Recently, it is observed in \cite[Proposition 3.9]{CJLR2023} that if the set $\sna(M)$ contains an isometric copy of $c_0$, then $\fm$ contains a $1$-complemented copy of $\ell_1$ which provides several examples of $M$ for which $\sna(M)$ does not contain $c_0$ isometrically.

In this paper, we tackle the following two related natural questions.
\begin{enumerate}[label=(Q\arabic*)]
\itemsep0.25em
\item Is $\lip(M) \setminus \sna(M)$ spaceable if $M$ is infinite?\label{q1}
\item Can $\sna(M)$ and $(\lip(M) \setminus \sna(M))\cup\{0\}$ be linear spaces if $M$ is infinite?\label{q2}
\end{enumerate}

In Section \ref{section:SNA}, we answer the first question, \ref{q1}: we show that for every infinite metric space $M$, the set $(\lip(M) \setminus \sna(M))\cup\{0\}$ always contains an isometric copy of $\ell_\infty$. In fact, we prove a stronger result by showing that those functions can be chosen so that their corresponding functionals do not attain their norm as functionals (see Theorem \ref{Main-Theorem-c0-PNA}). As a natural extension of the study, we also investigate the spaceability of the set $\lip(M) \setminus \overline{\sna(M)}$. Namely, we show that in all the known cases where this set is non-empty, $(\lip(M) \setminus \overline{\sna(M)})\cup\{0\}$ actually contains an isometric copy of $\ell_\infty$. Further spaceability results and remarks are considered for several classes of Lipschitz functions that do or do not attain their norms.

In Section \ref{section:linearity}, we give a negative answer to the second question, \ref{q2}, by showing that for every infinite metric space $M$, the sets $\sna(M)$ and $(\lip(M) \setminus \sna(M))\cup \{0\}$ are never linear spaces, contrary to the case of norm-attaining operators (see Theorem \ref{thm:sna_not_linear}). 

Finally, in Section \ref{LipNA}, we provide further spaceability results for sets of Lipschitz functions that do not attain their norms strongly by studying functions that lie in between certain sets of Lipschitz functions that attain their norm in weaker senses from the literature. We provide a diagram of the relations between these sets and show that all the involved regions are spaceable.

\section{Spaceability of non-strongly norm-attaining Lipschitz functions}\label{section:SNA}

In this section, we mainly aim to answer the question \ref{q1}, and further address the spaceability outside the norm-closure of strongly norm-attaining Lipschitz functions. To do so, we first observe the following simple but useful lemma.

\begin{lemma}\label{lemma:special-c0-in-ell-infty}
$\ell_\infty$ contains a closed subspace $Z$ which is an isometric copy of $\ell_\infty$ such that there is no coordinate of maximum modulus for any non-zero element $z \in Z$. That is, there is an isometric isomorphism $T:\ell_\infty\to Z$ such that if $a=(a_n)_{n=1}^\infty\in \ell_\infty$, then
\[ \|T (a)\| > |a_n|\] for all $n\in \mathbb{N}$.
\end{lemma}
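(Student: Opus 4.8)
The plan is to build $Z$ by ``spreading out'' each coordinate of the input across infinitely many positions, so that the supremum defining the norm is approached but never realized at any single coordinate. Concretely, I would fix a strictly increasing sequence of scalars $(c_k)_{k=1}^\infty$ with $0<c_k<1$ for every $k$ and $c_k\to 1$ (for instance $c_k=1-2^{-k}$), together with a bijection $\mathbb{N}\to\mathbb{N}\times\mathbb{N}$, which identifies $\ell_\infty=\ell_\infty(\mathbb{N})$ isometrically with $\ell_\infty(\mathbb{N}\times\mathbb{N})$ by relabeling coordinates. I then define $T:\ell_\infty\to\ell_\infty$ by
\[
(Ta)_{(n,k)} = c_k\, a_n, \qquad (n,k)\in\mathbb{N}\times\mathbb{N},
\]
for $a=(a_n)_{n=1}^\infty\in\ell_\infty$, and set $Z:=T(\ell_\infty)$.

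First I would verify that $T$ is a linear isometry. Linearity is immediate from the defining formula. For the norm, since the scalars $c_k$ are positive with $\sup_k c_k=1$, for every $a\in\ell_\infty$ one computes
\[
\|Ta\|_\infty=\sup_{n,k}|c_k a_n|=\Bigl(\sup_k c_k\Bigr)\Bigl(\sup_n|a_n|\Bigr)=\|a\|_\infty,
\]
so $T$ is an isometric embedding; in particular it is injective with complete, hence closed, range. Thus $Z$ is a closed subspace of $\ell_\infty$ which is an isometric copy of $\ell_\infty$ via $T$.

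It then remains to check the key property, namely that no coordinate of a nonzero element of $Z$ attains the maximal modulus. Let $a\in\ell_\infty\setminus\{0\}$ and let $(n,k)$ be an arbitrary coordinate. If $a_n\neq 0$, then $|(Ta)_{(n,k)}|=c_k|a_n|<|a_n|\le\|a\|_\infty=\|Ta\|$, using $c_k<1$ strictly; and if $a_n=0$, then $|(Ta)_{(n,k)}|=0<\|Ta\|$, because $a\neq 0$ forces $\|Ta\|=\|a\|_\infty>0$. In either case every coordinate of $Ta$ is strictly dominated by $\|Ta\|$, which is exactly the assertion that $\|T(a)\|>|(Ta)_j|$ for all coordinates $j$, i.e. that $Z$ contains no nonzero element with a coordinate of maximum modulus.

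The only delicate point, and the whole content of the lemma, is to reconcile two competing demands: making $T$ an isometry forces the weights $c_k$ to have supremum exactly $1$, whereas preventing any coordinate from realizing the norm forces each individual $c_k$ to lie strictly below $1$. A sequence increasing to $1$ but never reaching it meets both constraints simultaneously, and everything else above is a routine verification.
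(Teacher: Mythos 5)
Your proof is correct and follows essentially the same approach as the paper: both spread each coordinate $a_n$ over infinitely many positions with weights $1-2^{-k}$ increasing strictly to $1$, so the supremum is attained in norm but at no single coordinate. The only cosmetic difference is that the paper realizes the re-indexing explicitly via prime powers (placing $a_n(1-2^{-m})$ at position $s_n^m$) rather than invoking an abstract bijection $\mathbb{N}\to\mathbb{N}\times\mathbb{N}$.
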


\begin{proof}
Let $\{s_n\}_{n=1}^{\infty}$ be the sequence of increasing prime numbers. For each $n\in \bbn$, consider 
\[
z_n = (z_n (j))_{j=1}^\infty \in \ell_\infty
\]
given by 
\[
z_n (s_n^m) := 1-2^{-m} \text{ for each } m\in\bbn,\, \text{ and } \, z_n (j) := 0 \, \text{ otherwise}.
\]
For each $a=(a_n)_n \in \ell_\infty$, consider the vector $z^{(a)} \in \ell_\infty$ defined as the formal series $z^{(a)} :=\sum_n a_n z_n$. In other words, 
\[
z^{(a)} (s_n^m) = a_n (1-2^{-m}) \, \text{ for every } n, m \in \mathbb{N}, \, \text{ and } \, z^{(a)} (j) = 0 \, \text{ for every } j \not\in \{s_n^k : n, k \in \mathbb{N}\}.  
\]
Notice that $\|z^{(a)}\|_\infty =\|a\|_\infty$. However, by construction of $z_n$'s, for a fixed  $m\in\bbn$, we have that $|z^{(a)} (m)|< \|a\|_\infty$. 
\end{proof}

We are now able to show that for any infinite metric space $M$, the set of Lipschitz functions defined on $M$ which do not strongly attain their norm is spaceable, and in fact we show a stronger claim than that. In this document, by the \textit{support} of a Lipschitz function $f \in \Lip(M)$, we mean the set $\supp(f):=\{x\in M:\, f(x)\neq 0\}$.

\begin{theorem}\label{Main-Theorem-c0-PNA}
Given an infinite metric space $M$, 
\[
(\lip(M) \setminus \na (\mathcal{F}(M)))\cup\{0\} \text{ isometrically contains $\ell_\infty$.}
\] 
\end{theorem}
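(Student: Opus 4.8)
The plan is to produce a single linear isometry $\Phi:\ell_\infty\to\lip(M)$ whose range, apart from $0$, lies entirely inside $\lip(M)\setminus\na(\fm)$; since $\Phi$ is an isometry and $\Phi(0)=0$, the image is then the desired isometric copy of $\ell_\infty$. I would obtain $\Phi$ as a composition $\Phi=\Psi\circ T$, where $T:\ell_\infty\to Z\subseteq\ell_\infty$ is the isometry furnished by Lemma~\ref{lemma:special-c0-in-ell-infty} (so that no nonzero vector in the range of $T$ has a coordinate of maximal modulus), and $\Psi:\ell_\infty\to\lip(M)$ is an isometric ``coordinate'' embedding built from bump functions, described next. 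The role of $T$ is precisely to guarantee that the supremum defining the norm of $\Phi(a)$ is approached along the designated pairs but never realized at any single one of them.

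To build $\Psi$, I would first extract from the infinite space $M$ a sequence of pairs $(p_j,q_j)_{j\in\bbn}\in\widetilde M$ together with $1$-Lipschitz ``peak'' functions $h_j\in\lip(M)$ such that $S(h_j,p_j,q_j)=1$, with $\|h_j\|=1$ and the supports $\supp(h_j)$ confined to pairwise well-separated regions $R_j$ of $M$. For $c=(c_j)_j\in\ell_\infty$ I set $\Psi(c):=\sum_j c_j h_j$. The separation of the regions makes this series define a Lipschitz function whose slope across any pair is, up to a negligible cross-region contribution, one of the values $\pm c_j$; hence $\|\Psi(c)\|=\sup_j|c_j|=\|c\|_\infty$, so $\Psi$ is a linear isometry, and the only pairs at which $\Psi(c)$ can approach its norm are those lying in regions $R_j$ with $|c_j|$ close to $\|c\|_\infty$.

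With these pieces in place, fix $a\ne0$ and put $c:=T(a)$. Since $|c_j|<\|c\|_\infty=\|a\|_\infty$ for every $j$ by Lemma~\ref{lemma:special-c0-in-ell-infty}, the value $\|\Phi(a)\|=\|c\|_\infty$ is attained at no molecule $\frac{\delta_{p}-\delta_{q}}{d(p,q)}$, so already $\Phi(a)\notin\sna(M)$. To upgrade this to $\Phi(a)\notin\na(\fm)$, I would show that $T_{\Phi(a)}$ norms at no element of $S_{\fm}$: any sequence of pairs whose slopes tend to $\|c\|_\infty$ must eventually leave every fixed region (because $\sup_{j\le N}|c_j|<\|c\|_\infty$ for each $N$), and molecules supported in distinct well-separated regions stay uniformly bounded away from one another in $\fm$; hence such norming molecules possess no norm-convergent subsequence, and a tightness argument shows that no fixed $\mu\in\fm$ can carry the norming mass. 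Consequently $\|\Phi(a)\|=\|a\|_\infty$ and $\Phi(a)\in\lip(M)\setminus\na(\fm)$, as required.

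The main obstacle is exactly this last point---ruling out norm-attainment \emph{as a functional} rather than merely strong norm-attainment---since the extreme points of $B_{\fm}$ do not form a weak$^*$-compact set and a limit of molecules could a priori norm $T_{\Phi(a)}$; the uniform separation of the regions $R_j$ is what I would use to prevent the required limiting molecule from existing. A secondary difficulty is constructing the pairs and the peaks $h_j$ uniformly over all infinite $M$: I would split into the case $\inf_{\widetilde M}d=0$, where arbitrarily short and greedily separated pairs are available, and the uniformly discrete case, where one instead works with a separated family of comparable internal scale, the near-equidistant situation being the delicate extreme (handled by exploiting that $\lip(M)$ is then already close to $\ell_\infty$).
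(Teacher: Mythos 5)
Your proposal follows the same skeleton as the paper's proof: compose the embedding $T$ of Lemma~\ref{lemma:special-c0-in-ell-infty} with a disjointly supported ``coordinate'' family in $\lip(M)$, so that for $a\neq 0$ no single region can realize the norm $\|a\|_\infty$. The genuine gap is in the step you yourself flag as the main obstacle: excluding norm-attainment \emph{as a functional on} $\calf(M)$. Your mechanism --- asymptotically norming molecules must wander through infinitely many regions, such molecules are uniformly separated in $\calf(M)$, hence admit no norm-convergent subsequence, hence (``by tightness'') no $\mu\in S_{\calf(M)}$ norms $T_{\Phi(a)}$ --- is not a valid implication, because attainment at $\mu$ does not require $\mu$ to be a cluster point of norming molecules: the norming element can be diffuse. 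A concrete counterexample to your inference: let $M=[0,1]$ and let $f\in\lip([0,1])$ have $f'=\chi_A-\chi_{[0,1]\setminus A}$, where $A$ is a measurable set such that both $A$ and its complement meet every nontrivial interval in positive measure. By Lemma~\ref{lemma:equiv-sna_naf}(a), $f\notin\sna([0,1])$; moreover any molecules $m_{p_k,q_k}$ with $|S(f,p_k,q_k)|\to 1$ must satisfy $|p_k-q_k|\to 0$ (on the set $\{|p-q|\geq\eps\}$ the slopes of $f$ are bounded away from $1$), and under the identification $\calf([0,1])\cong L_1([0,1])$ of \eqref{eq:psi} they become normalized characteristic functions with shrinking supports, which have no $L_1$-convergent subsequence. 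So the hypotheses of your argument hold. Yet $|f'|=1$ a.e., so by Lemma~\ref{lemma:equiv-sna_naf}(b) $f$ \emph{does} attain its norm as a functional, namely at the diffuse element corresponding to $\chi_A-\chi_{[0,1]\setminus A}\in L_1([0,1])$. This is exactly the kind of norming element --- mass spread out, far from every molecule --- that your tightness appeal would have to exclude, and no argument for that is given.

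What is needed, and what the paper's proof supplies, is a computation ruling out diffuse norming elements for $h^{(a)}=\sum_n z^{(a)}(n)f_n$: approximating a hypothetical norming $\mu$ by finitely supported $\mu_k$, the disjointness of the supports yields $\langle h^{(a)},\mu_k\rangle=\sum_n z^{(a)}(n)\langle f_n,\mu_k\rangle$, while the fact that every sign-combination $\sum_n\theta_n f_n$ has norm at most $1$ yields $\sum_n|\langle f_n,\mu_k\rangle|\leq 1$; combining these, attainment forces $\sum_n\bigl(1-|z^{(a)}(n)|\bigr)|\langle f_n,\mu\rangle|=0$, hence $\langle f_n,\mu\rangle=0$ for every $n$ by the strict inequality $|z^{(a)}(n)|<1$, and then $\langle h^{(a)},\mu\rangle=0\neq 1$, a contradiction. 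Your proposal contains no substitute for this estimate. A secondary, lesser gap: your construction of the peaks $h_j$ over an arbitrary infinite $M$ is not secured in the uniformly discrete, near-equidistant case (e.g.\ an infinite equilateral space), where no ``well-separated regions'' exist at all and a disjointly supported family with $S(h_j,p_j,q_j)=1$ satisfies the $\ell_\infty$-sum property only if each $h_j$ takes values balanced around $0$ (otherwise $\|h_j\pm h_k\|>1$ for suitable signs); the paper avoids this issue entirely by quoting the family constructed by C\'uth and Johanis \cite{CJ17}.
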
 

\begin{proof}
Notice first from the proofs of \cite[Lemma 4, Theorem 5]{CJ17} that there exists $\{f_n\}_n \subseteq \lip(M)$ satisfying the following properties:
\begin{enumerate}[label=(a\arabic*)]
\itemsep0.25em
\item if $n\neq m$, then $\supp(f_n) \cap \supp (f_m) = \emptyset$; \label{a1} 
\item for $a=(a_n)_n \in\ell_\infty$, if $f^{(a)}$ is the pointwise limit of $\sum_{n} a_n f_n$, then $\|f^{(a)}\|=\|a\|_\infty$. \label{a2}
\end{enumerate}
Consider from Lemma \ref{lemma:special-c0-in-ell-infty} the subspace $\{ z^{(a)}: a \in \ell_\infty\}$ of $\ell_\infty$ which is isometrically isomorphic to $\ell_\infty$.
For each $a=(a_n)_n \in \ell_\infty$, let us define $h^{(a)} \in \lip(M)$ to be the pointwise limit of
\[
h^{(a)} := f^{( z^{(a)} )} = \sum_n z^{(a)} (n) f_n. 
\]
Then $\|h^{(a)}\| = \|z^{(a)}\|_\infty = \|a\|_\infty$. 

We claim that $h^{(a)}$ does not attain its norm as a functional on $\mathcal{F}(M)$ if $a\neq 0$. For simplicity, assume that $\|a\|_\infty =1 $. 
Assume to the contrary that $\langle h^{(a)}, \mu \rangle = 1$ for some $\mu\in\fm$ with $\|\mu\|=1$. 
Notice first that given a finitely supported element $\xi \in \fm$, we have 
\begin{equation}\label{eq:finitely_supported}
\langle h^{(a)}, \xi \rangle = \sum_n z^{(a)}(n) \langle f_n, \xi\rangle
\end{equation}
since $\{f_n\}_n$ have disjoint supports. 
Choose a sequence of finitely supported elements $\{ \mu_n \}_n \subseteq \mathcal{F}(M)$ such that $\|\mu_n\|=1$ for all $n\in\bbn$ and $\|\mu-\mu_n\|\rightarrow 0$. 
Passing to a subsequence if needed, we can assume $\|\mu_n-\mu_m\|<\frac{1}{2^n}$ for $m\geq n\geq 1$. Now,
\begin{align}\label{eq22-1}
\nonumber 1=\langle h^{(a)},\mu\rangle = \lim_{k\to\infty} \langle h^{(a)}, \mu_k\rangle &\leq \lim_{k\to\infty} \left( \sum_{n=1}^\infty |z^{(a)}(n)| |\langle f_n, \mu_k\rangle| \right) \\
&\leq \lim_{k\to\infty} \left( \sum_{n=1}^\infty |\langle f_n, \mu_k\rangle| \right)\\ 
&=\lim_{k\to\infty}\left( \sum_{n=1}^\infty \langle \theta_{n,k} f_n, \mu_k\rangle \right)=\lim_{k\to\infty} \langle g_k, \mu_k\rangle \leq 1 \nonumber,
\end{align}
where $g_k:=\sum_{n=1}^\infty \theta_{n,k} f_n$, and $\theta_{n,k}:=\sign(\langle f_n, \mu_k\rangle)$ for each $n,k\in\bbn$. It follows that
\begin{equation}\label{eq22-2}
\lim_{k\to\infty} \sum_{n=1}^\infty (1-|z^{(a)}(n)|) |\langle f_n, \mu_k\rangle|=0.
\end{equation}
On the other hand, for every $l\in\bbn$, 
\begin{align*}
\sum_{n=1}^l |\langle f_n, \mu_k-\mu\rangle| &= \lim_{m\to\infty} \sum_{n=1}^l |\langle f_n, \mu_k-\mu_m\rangle| \\
&= \lim_{m\to\infty} \left\langle \sum_{n=1}^l \tau_{n,k,m} f_n, \mu_k-\mu_m \right\rangle \leq \lim_{m\to\infty} \|\mu_k-\mu_m\|\leq \frac{1}{2^k},
\end{align*}
where $\tau_{n,k,m}:=\sign(\langle f_n, \mu_k-\mu_m\rangle)$ for each $n,k,m\in\bbn$. Thus, letting $l\to\infty$,
\begin{equation}\label{eq22-3}
\sum_{n=1}^\infty |\langle f_n, \mu_k-\mu\rangle| \le \frac{1}{2^k}.
\end{equation}
Note that for each $l,k\in\bbn$,
$$\sum_{n=1}^l |\langle f_n, \mu\rangle| \leq \sum_{n=1}^l |\langle f_n, \mu_k\rangle | + \frac{1}{2^k} = \left\langle \sum_{n=1}^l \alpha_n f_n, \mu_k\right\rangle + \frac{1}{2^k}\leq 1+\frac{1}{2^k},$$
for suitable signs $\alpha_1,\ldots,\alpha_l \in \{-1,1\}$. In particular,
\begin{align}
&\phantom{\leq}\left| \sum_{n=1}^\infty (1-|z^{(a)}(n)|)|\langle f_n, \mu_k\rangle | - \sum_{n=1}^\infty (1-|z^{(a)}(n)|)|\langle f_n, \mu\rangle | \right| \label{eq_difference} \\ 
&  \leq \sum_{n=1}^\infty (1-|z^{(a)}(n)|)\big| |\langle f_n, \mu_k\rangle |-|\langle f_n, \mu\rangle |\big| \nonumber \\
& \leq \sum_{n=1}^\infty (1-|z^{(a)}(n)|)|\langle f_n, \mu_k-\mu\rangle |\leq \sum_{n=1}^\infty |\langle f_n, \mu_k-\mu\rangle |\leq \frac{1}{2^k}. \nonumber
\end{align}
Letting $k\rightarrow \infty$ in \eqref{eq_difference} and using \eqref{eq22-2}, we obtain
$$\sum_{n=1}^\infty (1-|z^{(a)}(n)|)|\langle f_n, \mu\rangle |=0.$$
This shows that $\langle f_n, \mu\rangle=0$ for all $n\in\bbn$, and thus it follows from \eqref{eq22-3} that 
\[
\sum_{n=1}^{\infty} |\langle f_n, \mu_k\rangle| = \sum_{n=1}^\infty |\langle f_n, \mu_k-\mu\rangle| \le \frac{1}{2^k}
\]
for every $k \in \mathbb{N}$. This contradicts \eqref{eq22-1}, so the claim is proven.

Consequently, 
\[
\ell_\infty \stackrel{1}{=} \{h^{(a)} : a \in \ell_\infty\} \subseteq (\lip(M) \setminus \na (\mathcal{F}(M)) ) \cup \{0\}. \qedhere
\] 
\end{proof}

Using the set relation $\sna(M) \subseteq \na(\mathcal{F}(M))$, the following consequence can be obtained directly.

\begin{corollary}\label{cor:Main-Theorem-c0-PNA}
For any infinite metric space $M$, 
\[
(\lip(M) \setminus \sna(M) )\cup\{0\} \text{ isometrically contains $\ell_\infty$.}
\]
\end{corollary}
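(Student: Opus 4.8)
The plan is to obtain this as an immediate consequence of Theorem \ref{Main-Theorem-c0-PNA}, using only the elementary set inclusion $\sna(M) \subseteq \na(\fm)$. First I would recall why this inclusion holds: a function $f \in \lip(M)$ strongly attains its norm exactly when the supremum in \eqref{def:lipnorm} is a maximum, that is, when there is a pair $(p,q) \in \widetilde{M}$ with $|S(f,p,q)| = \|f\|$. As noted in the introduction, this occurs if and only if the linearization $T_f \in \fm^*$ attains its norm at the molecule $\frac{\delta_p - \delta_q}{d(p,q)}$, which is a unit vector of $\fm$ since $\|\delta_p - \delta_q\| = d(p,q)$. Hence every strongly norm-attaining $f$ attains its norm as a functional on $\fm$, giving $\sna(M) \subseteq \na(\fm)$.

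Passing to complements within $\lip(M)$ reverses the inclusion, so $\lip(M) \setminus \na(\fm) \subseteq \lip(M) \setminus \sna(M)$, and therefore
\[
(\lip(M) \setminus \na(\fm)) \cup \{0\} \subseteq (\lip(M) \setminus \sna(M)) \cup \{0\}.
\]
Theorem \ref{Main-Theorem-c0-PNA} supplies an isometric copy of $\ell_\infty$ inside the smaller set on the left, namely $\{h^{(a)} : a \in \ell_\infty\}$. Since this copy already lives inside the larger set on the right, the very same family $\{h^{(a)} : a \in \ell_\infty\}$ witnesses the desired conclusion for $(\lip(M) \setminus \sna(M)) \cup \{0\}$, with no change to the isometry.

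There is essentially no obstacle here: the entire argument is a one-line deduction from the theorem, and the only point meriting attention is the correct direction of the inclusion $\sna(M) \subseteq \na(\fm)$. That inclusion is transparent once strong norm attainment is identified with norm attainment of $T_f$ at a normalized molecule, after which no additional construction or estimate is required.
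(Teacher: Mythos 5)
Your proposal is correct and matches the paper's own argument exactly: the paper derives the corollary directly from Theorem \ref{Main-Theorem-c0-PNA} via the same inclusion $\sna(M) \subseteq \na(\fm)$, which holds because strong attainment at a pair $(p,q)$ is precisely norm attainment of $T_f$ at the molecule $\frac{\delta_p-\delta_q}{d(p,q)}$. Nothing further is needed.
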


Next, we focus on spaceability of the set of Lipschitz functions that cannot be approximated by strongly norm-attaining Lipschitz functions. 
Recall that a metric space $M$ is said to be \textit{metrically convex} (resp., \textit{length}) if $d(x,y)$ is the minimum (resp., infimum) of the length of the rectifiable curves joining $x$ and $y$ for every pair of points $(x, y) \in \widetilde{M}$. It is clear that every metrically convex space is a length space, however the converse is not true in general (see \cite[Example 2.4]{IKW07}). It was first observed in \cite[Theorem 2.3]{KMS16} that if $M$ is a metrically convex metric space, then $\sna(M)$ is not norm-dense in $\lip(M)$. Afterwards, it was actually shown that the same result holds for a complete {length} metric space \cite[Theorem 2.2]{CCGMR19}. 
In the following, we show the existence of an isometric copy of $\ell_\infty$ inside the set $(\lip(M) \setminus \overline{\sna(M)})\cup\{0\}$ when $M$ is a complete length space, and an isometric copy of $c_0$ inside the set $(\na(\mathcal{F}(M)) \setminus \overline{\sna(M)} )\cup\{0\} $ when $M$ is a metrically convex space. 

Before we state the theorem, let us recall some facts and lemmas. Given an interval $A \subseteq \bbr$, it is well known that $\lip(A)$ is isometrically isomorphic to $L_{\infty}(A)$ via the identification $f\mapsto f'$, which is defined a.e. (see \cite[Example 3.11]{Weaver18}). More precisely, let $A= [a,b]$. Then the mapping 
\begin{equation}\label{eq:psi}
\Psi : \mathcal{F}([a,b]) \rightarrow L_1 ([a,b]),
\end{equation}
given by $\Psi (m_{x,y}) = |x-y|^{-1} \chi_{[x,y)}$ for every $x<y \in [a,b]$, is an isometric isomorphism. Notice that 
\[
\Phi = (\Psi^{-1})^* : \lip ([a,b]) \rightarrow L_\infty ([a,b]) 
\]
is given by $\Phi (f) = f'$ for every $f \in \lip ([a,b])$, and $\Phi$ is an isometric isomorphism.

\begin{lemma}\label{lemma:equiv-sna_naf}
Let $A$ be any \textup{(}not necessarily bounded\textup{)} interval of $\bbr$ and $f \in \lip(A)$ a non-zero Lipschitz function.
\begin{enumerate}
\itemsep0.25em
\item[\textup{(a)}] \textup{(}\cite[Lemma 2.2]{KMS16}\textup{)} $f$ strongly attains its norm if and only if there exists a subinterval $B\subset A$ with positive measure such that either $f'(x)=\|f\|$ for all $x\in B$ or $f'(x)=-\|f\|$ for all $x\in B$.
\item[\textup{(b)}] \textup{(}\cite[Lemma 2.6]{AAAG07}\textup{)} $f$ attains its norm as a functional on $\calf(A)$ if and only if the set $A_f:=\{x\in A:\, |f'(x)|=\|f\|\}$ has positive Lebesgue measure.
\end{enumerate} 
\end{lemma}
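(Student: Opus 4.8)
The plan is to prove both statements simultaneously through the isometric identifications recalled just before the lemma, which reduce everything to elementary facts about how an $L_\infty$-functional acts on $L_1$. Since $f$ is Lipschitz, it is absolutely continuous on every bounded subinterval, so the fundamental theorem of calculus gives $f(q)-f(p)=\int_p^q f'$ whenever $p<q$ in $A$, and the identification $\Phi(f)=f'$ yields $\|f\|=\|f'\|_\infty$. I would first record these two facts, and note that the isometries $\Psi:\calf(A)\to L_1(A)$ and $\Phi:\lip(A)\to L_\infty(A)$ remain valid for an unbounded interval $A$ (one may otherwise reduce to bounded subintervals), so that $\langle f,\mu\rangle=\int_A f'\,\Psi(\mu)$ with $\|\Psi(\mu)\|_1=\|\mu\|$.

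For part (a), the forward direction starts from a pair $(p,q)$, say $p<q$, with $|S(f,p,q)|=\|f\|$. Writing $|S(f,p,q)|=\frac{1}{q-p}\bigl|\int_p^q f'\bigr|\le \frac{1}{q-p}\int_p^q|f'|\le\|f'\|_\infty=\|f\|$ and forcing every inequality to be an equality, I would conclude that $|f'|=\|f\|$ a.e.\ on $[p,q]$ and that $f'$ has constant sign a.e.\ there; hence $f$ coincides on $[p,q]$ with an affine function of slope $\pm\|f\|$, so $f'(x)=\pm\|f\|$ at every point of the subinterval $B=(p,q)$. The converse is immediate: if $f'\equiv\|f\|$ (or $\equiv-\|f\|$) on a subinterval $B$ of positive measure, pick any two points $p<q$ of $B$ and integrate to obtain $S(f,p,q)=\pm\|f\|$, so the supremum defining the norm is attained.

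For part (b), I would translate the statement through $\Psi,\Phi$ into the classical criterion that $h\in L_\infty(A)$ attains its norm as a functional on $L_1(A)$ exactly when $\{|h|=\|h\|_\infty\}$ has positive measure, applied to $h=f'$. For the ``only if'' direction, if $\langle f,\mu\rangle=\|f\|$ with $\|\mu\|=1$, set $g=\Psi(\mu)$ and examine the equality case of $\int_A f'g\le\int_A|f'||g|\le\|f'\|_\infty\|g\|_1$: equality forces $|g|=0$ a.e.\ off $A_f$, so $g\neq 0$ is concentrated on $A_f$, whence $A_f$ has positive measure. For the ``if'' direction, choose a subset $K\subseteq A_f$ of finite positive measure and let $g=|K|^{-1}\sgn(f')\chi_K$; then $g\in S_{L_1(A)}$ and $\int_A f'g=\|f\|$, so $\mu=\Psi^{-1}(g)$ norms $f$.

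The only genuinely delicate points are the passage from an almost-everywhere identity to a statement holding at \emph{every} point of an honest subinterval in (a) --- which the affine-function observation resolves cleanly --- and ensuring the $L_1$/$L_\infty$ duality is available for unbounded $A$; the equality analysis in H\"older's inequality in (b) is routine once one keeps track of both the support condition and the sign condition. I expect the a.e.-to-everywhere step in (a) to be the main thing to state carefully, since the remainder is bookkeeping within the $L_1$--$L_\infty$ picture.
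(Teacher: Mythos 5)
Your proposal is correct. Note that the paper gives no proof of this lemma at all: both parts are quoted as known results, (a) from \cite[Lemma 2.2]{KMS16} and (b) from \cite[Lemma 2.6]{AAAG07}, so there is no internal argument to compare against; your reconstruction — passing through the isometries $\Psi$ and $\Phi$ to the $L_1$--$L_\infty$ duality and then analyzing the equality cases (FTC plus the affine-function observation upgrading the a.e.\ identity to every point of an interval for (a), and the H\"older equality analysis for (b)) — is precisely the standard route taken in those references. The two delicate points you flag are handled correctly: the a.e.-to-everywhere step is resolved by affineness of $f$ on $[p,q]$, and the identifications $\lip(A)\cong L_\infty(A)$, $\calf(A)\cong L_1(A)$ do hold for unbounded intervals, so no reduction to bounded subintervals is actually needed in (b).
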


Given $p\in M$ and $r>0$, we denote by $B(p, r):=\{q\in M:\, d(p,q)\leq r\}$ the closed ball centered at $p$ with radius $r$.

\begin{lemma}\label{lemma:new-extension-lemma-[0,1]}
Let $M$ be an infinite pointed metric space containing distinct points $\{p_n\}_{n=1}^{\infty}$ and $\{q_n\}_{n=1}^\infty$ such that $d(p_n, p_m) > d(p_n, q_n) + d(p_m, q_m)$ for every $(n,m) \in \widetilde{\mathbb{N}}$. 
Suppose that $\{g_n\}_{n=1}^{\infty}\subset \lip(M)$ satisfies that $\|g_n\| =1$ and 
\begin{equation}\label{eq:g_n_bounded_by_cone}
|g_n(p)| \leq \max \{0, d(p_n,q_n)-d(p_n,p)\} \, \text{ for all } n \in \mathbb{N} \text{ and } p \in M.  
\end{equation}
Then given $\lambda=(\lambda_n)_n\in \ell_\infty \setminus\{0\}$, the element $g^{(\lambda)}:= \sum_{n=1}^{\infty} \lambda_n g_n$ \textup{(}pointwise limit\textup{)} satisfies that $\|g^{(\lambda)}\| = \|\lambda\|_\infty$. Moreover, if $g_n \not\in \sna(M)$ for every $n\in\mathbb{N}$, then $g^{(\lambda)} \not\in \sna(M)$.  
\end{lemma}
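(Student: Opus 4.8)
The plan is to first unpack the hypotheses into a clean geometric picture and then verify the norm identity and the non-attainment separately. The bound \eqref{eq:g_n_bounded_by_cone} forces each $g_n$ to vanish at every point $p$ with $d(p_n,p)\geq d(p_n,q_n)$; in particular $\supp(g_n)\subseteq\{p\in M: d(p_n,p)<d(p_n,q_n)\}$ and $g_n(q_n)=0$. Combined with the separation hypothesis $d(p_n,p_m)>d(p_n,q_n)+d(p_m,q_m)$ and the triangle inequality, this shows that the supports $\{\supp(g_n)\}_n$ are pairwise disjoint, so at every point of $M$ at most one summand $\lambda_n g_n$ is nonzero. Hence $g^{(\lambda)}$ is a well-defined function with $g^{(\lambda)}(0)=0$, and for any $p$ we have $g^{(\lambda)}(p)=\lambda_{n}g_n(p)$ whenever $p\in\supp(g_n)$ and $g^{(\lambda)}(p)=0$ otherwise.

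For the upper bound $\|g^{(\lambda)}\|\leq\|\lambda\|_\infty$ I would estimate $|S(g^{(\lambda)},p,q)|$ by splitting on the location of $p$ and $q$. If $p,q$ lie in a common $\supp(g_n)$ (or one of them lies outside all supports, where both $g^{(\lambda)}$ and $g_n$ vanish), then $S(g^{(\lambda)},p,q)=\lambda_n S(g_n,p,q)$, so the slope is at most $|\lambda_n|\leq\|\lambda\|_\infty$. The only genuine case is $p\in\supp(g_n)$ and $q\in\supp(g_m)$ with $n\neq m$: here I use $|g_n(p)|\leq d(p_n,q_n)-d(p_n,p)$ and $|g_m(q)|\leq d(p_m,q_m)-d(p_m,q)$ together with
\[
d(p_n,q_n)+d(p_m,q_m)<d(p_n,p_m)\leq d(p_n,p)+d(p,q)+d(q,p_m)
\]
to deduce $|g_n(p)|+|g_m(q)|<d(p,q)$. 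This yields $|S(g^{(\lambda)},p,q)|\leq\|\lambda\|_\infty\bigl(|g_n(p)|+|g_m(q)|\bigr)/d(p,q)<\|\lambda\|_\infty$, a \emph{strict} inequality that will be reused below.

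For the lower bound I fix $n$ and $\epsilon\in(0,1)$ and choose $(p,q)\in\widetilde M$ with $|S(g_n,p,q)|>1-\epsilon$; since the slope is nonzero, at least one endpoint, say $p$, lies in $\supp(g_n)$. If $q\in\supp(g_n)$ or $q$ lies outside all supports, then $S(g^{(\lambda)},p,q)=\lambda_n S(g_n,p,q)$ and we are done. The delicate case is when $q$ belongs to another support $\supp(g_m)$, since then $g^{(\lambda)}(q)=\lambda_m g_m(q)$ may partially cancel the slope. Here the same separation estimate saves the situation: from $|g_n(p)|>(1-\epsilon)d(p,q)$ and $|g_n(p)|+|g_m(q)|<d(p,q)$ one gets $|g_m(q)|<\epsilon\, d(p,q)$, so the spurious term is negligible and $|S(g^{(\lambda)},p,q)|\geq|\lambda_n|(1-\epsilon)-\|\lambda\|_\infty\,\epsilon$. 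Letting $\epsilon\to0$ and taking the supremum over $n$ gives $\|g^{(\lambda)}\|\geq\|\lambda\|_\infty$, completing the norm identity. I expect this cancellation case to be the main obstacle, as it is the only place where disjointness of supports alone is insufficient and the quantitative separation bound is essential.

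Finally, for the non-attainment statement, suppose $g^{(\lambda)}$ strongly attained its norm at some $(p,q)$, so $|S(g^{(\lambda)},p,q)|=\|\lambda\|_\infty$. The cross-support case is ruled out by the strict inequality established in the upper-bound step, and the case where both $g^{(\lambda)}(p),g^{(\lambda)}(q)$ vanish gives slope $0<\|\lambda\|_\infty$ (as $\lambda\neq0$). Thus $p,q$ effectively lie in a single $\supp(g_n)$, and $|S(g^{(\lambda)},p,q)|=|\lambda_n|\,|S(g_n,p,q)|=\|\lambda\|_\infty$ forces $|\lambda_n|=\|\lambda\|_\infty$ and $|S(g_n,p,q)|=1=\|g_n\|$, i.e.\ $g_n$ strongly attains its norm at $(p,q)$, contradicting $g_n\notin\sna(M)$. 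Hence $g^{(\lambda)}\notin\sna(M)$.
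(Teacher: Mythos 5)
Your proof is correct and follows essentially the same route as the paper's: the same case analysis according to which of the disjoint balls $B(p_n,d(p_n,q_n))$ contain $p$ and $q$, with the cone bound \eqref{eq:g_n_bounded_by_cone} plus the separation hypothesis yielding the strict estimate $|g_n(p)|+|g_m(q)|<d(p,q)$ for cross-ball pairs and the identity $S(g^{(\lambda)},p,q)=\lambda_n S(g_n,p,q)$ in the remaining cases, from which both the norm computation and the non-attainment assertion follow. If anything, you are more careful than the paper on the lower bound $\|g^{(\lambda)}\|\geq\|\lambda\|_\infty$: the paper dismisses everything beyond its two displayed cases as ``easily handled,'' whereas your $\varepsilon$-argument for a near-norming pair of $g_n$ whose second endpoint lands in some $\supp(g_m)$ with $m\neq n$ (where the term $\lambda_m g_m(q)$ could partially cancel the slope) makes explicit the one situation in which disjointness of the supports alone does not suffice and the quantitative separation bound is genuinely needed.
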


\begin{proof}
Given $(p,q)\in\widetilde{M}$, we shall estimate the value $|S(g^{(\lambda)}, p, q)|$. Note from \eqref{eq:g_n_bounded_by_cone} that $\supp (g_n) \subseteq B(p_n, d(p_n,q_n))$ for each $n \in \mathbb{N}$. Let us first consider the case when $p \in B(p_n, d(p_n,q_n))$ and $q \in  B(p_m, d(p_m,q_m))$ for some $n < m$. Then 
it follows that,  if $\max \{|\lambda_n|, |\lambda_m|\}\neq0$,
\begin{align}\label{lem:ga1}
|S(g^{(\lambda)}, p, q)| &\leq \frac{|\lambda_n|\left( d(p_n, q_n)-d\left(p_n, p \right) \right) + |\lambda_m|\left( d(p_m, q_m)-d(p_m, q) \right) }{d(p,q)}  \\
&< \max \{|\lambda_n|, |\lambda_m|\}.\nonumber
\end{align}
Next, we consider the case when $p, q \in B(p_n, d(p_n,q_n))$ for some $n \in \mathbb{N}$. Then it is clear that 
\begin{equation}\label{lem:ga2}
S(g^{(\lambda)}, p,q) = \lambda_{n} S(g_n, p, q).     
\end{equation}
As the other cases are easily handled; we conclude that $\|g^{(\lambda)}\| = \|\lambda\|_\infty$. The second assertion follows from \eqref{lem:ga1} and \eqref{lem:ga2}. 
\end{proof}

Finally, we are ready to prove the promised result with the following lemma.

\begin{lemma}[\mbox{\cite[Lemma 2.1]{CCGMR19}}] \label{lem:2.1}
Let $M$ be a metric space, $f \in \sna (M)$ which strongly attains its norm at $(p,q) \in \widetilde{M}$. Let $\eps >0$ and let $\alpha_\eps$ be a rectifiable curve in $M$ joining $p, q$ such that $\textup{length}(\alpha_\eps)\leq d(p,q)+\eps$, where $\textup{length}(\alpha_\eps)$ is the length of $\alpha_\eps$. 
Then for any $z_1, z_2 \in \alpha_\eps$, we have that $|f(z_1)-f(z_2)| \geq \|f\| (d(z_1,z_2)-\eps)$. 
\end{lemma}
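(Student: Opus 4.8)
The plan is to exploit the fact that strong norm attainment at $(p,q)$ forces $f$ to increase at essentially the maximal allowed rate along any near-geodesic from $p$ to $q$, so that the curve $\alpha_\eps$ can ``waste'' at most $\eps$ worth of its length. First I would normalize the situation: replacing $f$ by $-f$ if necessary (which alters neither $\|f\|$ nor the quantity $|f(z_1)-f(z_2)|$), I may assume $f(q)-f(p)=\|f\|\,d(p,q)>0$. Next I would reparametrize $\alpha_\eps$ by arc length, obtaining a map $\gamma\colon[0,L]\to M$ with $\gamma(0)=p$, $\gamma(L)=q$, and $L=\mathrm{length}(\alpha_\eps)\le d(p,q)+\eps$; the key property of this parametrization is that $d(\gamma(s),\gamma(t))\le|s-t|$ for all $s,t$, since the distance between two points is bounded by the length of the sub-arc joining them.

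Writing $\phi:=f\circ\gamma\colon[0,L]\to\bbr$, I would then record that $\phi$ is $\|f\|$-Lipschitz, because $|\phi(s)-\phi(t)|\le\|f\|\,d(\gamma(s),\gamma(t))\le\|f\|\,|s-t|$, and that $\phi(L)-\phi(0)=f(q)-f(p)=\|f\|\,d(p,q)$. Given $z_1,z_2\in\alpha_\eps$, I pick parameters $s_1\le s_2$ with $\gamma(s_1)=z_1$ and $\gamma(s_2)=z_2$ (possible after relabeling $z_1,z_2$, which is harmless since the conclusion is symmetric in them). The heart of the argument is the telescoping identity
\[
\|f\|\,d(p,q)=\bigl(\phi(s_1)-\phi(0)\bigr)+\bigl(\phi(s_2)-\phi(s_1)\bigr)+\bigl(\phi(L)-\phi(s_2)\bigr),
\]
in which I bound the two outer increments from above by $\|f\|\,s_1$ and $\|f\|\,(L-s_2)$ respectively, using the Lipschitz estimate for $\phi$.

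Rearranging yields $\phi(s_2)-\phi(s_1)\ge\|f\|\bigl(d(p,q)-s_1-(L-s_2)\bigr)$, and since $L\le d(p,q)+\eps$ this gives $\phi(s_2)-\phi(s_1)\ge\|f\|\,(s_2-s_1-\eps)$. Finally, because the arc length dominates the distance, $s_2-s_1\ge d(\gamma(s_1),\gamma(s_2))=d(z_1,z_2)$, so
\[
|f(z_1)-f(z_2)|\ge \phi(s_2)-\phi(s_1)\ge\|f\|\,(s_2-s_1-\eps)\ge\|f\|\,(d(z_1,z_2)-\eps),
\]
as desired.

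The only delicate point I anticipate is justifying the arc-length reparametrization and, with it, the two inequalities $d(\gamma(s),\gamma(t))\le|s-t|$ and $s_2-s_1\ge d(z_1,z_2)$; once these standard facts about rectifiable curves are in place, the rest is just the elementary ``length-slack'' bookkeeping above. It is worth noting that no hypothesis beyond the stated near-geodesic bound $\mathrm{length}(\alpha_\eps)\le d(p,q)+\eps$ enters the argument, which is precisely why the error term $\eps$ propagates unchanged into the conclusion.
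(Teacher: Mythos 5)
Your proof is correct. One thing to note at the outset: the paper itself gives no proof of this lemma --- it is imported verbatim from \cite[Lemma 2.1]{CCGMR19} --- so the comparison is really with that source. The argument there is the same ``length-slack'' bookkeeping you use, but phrased without any reparametrization: one observes that $p$, $z_1$, $z_2$, $q$ occur in this order along $\alpha_\eps$ (after relabeling $z_1,z_2$), so that
\[
d(p,z_1)+d(z_1,z_2)+d(z_2,q)\leq \textup{length}(\alpha_\eps)\leq d(p,q)+\eps,
\]
and then applies the triangle inequality
\[
\|f\|\,d(p,q)=|f(p)-f(q)|\leq |f(p)-f(z_1)|+|f(z_1)-f(z_2)|+|f(z_2)-f(q)|
\leq \|f\|\bigl(d(p,z_1)+d(z_2,q)\bigr)+|f(z_1)-f(z_2)|,
\]
from which the conclusion drops out upon substituting the first display. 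Your route --- normalizing so that $f(q)-f(p)=\|f\|\,d(p,q)$, passing to the arc-length parametrization $\gamma$, and telescoping the signed increments of $\phi=f\circ\gamma$ --- is logically equivalent and equally valid; the arc-length machinery buys you nothing essential here (the two outer bounds $\|f\|s_1$ and $\|f\|(L-s_2)$ play exactly the role of $\|f\|d(p,z_1)$ and $\|f\|d(z_2,q)$ above, only measured in parameter rather than in distance), but it is a clean way to make the ``points in order along the curve'' step rigorous, which is the one standard fact both proofs lean on. Two trivial edge cases you glossed over, neither a real gap: if $\|f\|=0$ the statement is vacuous (your normalization $f(q)-f(p)>0$ silently assumes $f\neq 0$), and if $d(z_1,z_2)\leq\eps$ the conclusion holds for free; your chain of inequalities in fact already covers the latter since every step is valid regardless of sign.
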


\begin{theorem}\label{thm:main_sna_c0} \leavevmode
\begin{enumerate}
\itemsep0.25em  
\item[\textup{(a)}] If $M$ is a complete length metric space, then 
\[
(\lip (M) \setminus\overline{\sna(M)})\cup\{0\}  \text{ isometrically contains $\ell_\infty$.}
\]
\item[\textup{(b)}] If $M$ is a metrically convex metric space, then 
\[
(\na(\mathcal{F}(M))  \setminus\overline{\sna(M)})\cup\{0\} \text{ isometrically contains $c_0$.}
\]
\end{enumerate}

\end{theorem}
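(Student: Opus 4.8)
The plan is to realize both copies as the ranges of the isometric embedding $\lambda\mapsto g^{(\lambda)}=\sum_n\lambda_n g_n$ supplied by Lemma~\ref{lemma:new-extension-lemma-[0,1]}, where the blocks $g_n$ are chosen not merely outside $\sna(M)$ but \emph{uniformly far} from it. First I would fix a well-separated configuration. Since $M$ is infinite I can extract points $\{p_n\}_n$ that are either $\eps$-separated (when $M$ is not totally bounded) or lacunarily clustering (otherwise), and, using the length resp.\ metric-convexity structure, choose $q_n$ on a (near-)geodesic issuing from $p_n$ with $d(p_n,q_n)=:\rho_n$ small enough that the cone-separation hypothesis $d(p_n,p_m)>\rho_n+\rho_m$ of Lemma~\ref{lemma:new-extension-lemma-[0,1]} holds. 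Each block will have the form $g_n(\cdot)=f_n(d(p_n,\cdot))$ for a suitable $1$-Lipschitz profile $f_n$ on $[0,\rho_n]$ vanishing at both endpoints, so that $g_n$ is automatically $1$-Lipschitz, supported in $B(p_n,\rho_n)$, and cone-dominated in the sense required by \eqref{eq:g_n_bounded_by_cone}.

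For the profile I would take a balanced sawtooth-type $f_n$ with $f_n'\in\{-1,+1\}$ a.e.\ whose two level sets $\{f_n'=1\}$ and $\{f_n'=-1\}$ both meet every subinterval of $[0,\rho_n]$ in positive measure, with amplitude small enough that $|f_n(s)|\le \rho_n-s$ (only the shrinking envelope near $s=\rho_n$ constrains this, and it is easy to respect with slope $\pm1$). Through the isometries $\Psi,\Phi$ of \eqref{eq:psi} and Lemma~\ref{lemma:equiv-sna_naf}(a), the absence of any subinterval on which $f_n'\equiv\pm\|f_n\|$ gives $f_n\notin\sna$; moreover, for every $h\in\sna([0,\rho_n])$, on a subinterval witnessing strong attainment of $h$ the value $f_n'=-1$ occurs on positive measure, forcing $\|f_n-h\|=\|f_n'-h'\|_\infty\ge 1$, i.e.\ a \emph{uniform} gap from $\sna$. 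Finally, since $|f_n'|=1$ a.e., Lemma~\ref{lemma:equiv-sna_naf}(b) guarantees that $f_n$ (hence $g_n$) \emph{does} attain its norm as a functional, which is what part (b) will exploit.

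The heart of the argument — and the step I expect to be the main obstacle — is upgrading $g^{(\lambda)}\notin\sna(M)$ to $g^{(\lambda)}\notin\overline{\sna(M)}$ for $\lambda\neq 0$, that is, bounding $\dist\!\big(g^{(\lambda)},\sna(M)\big)$ below. Suppose $g\in\sna(M)$ attains at $(p,q)$ with $\|g-g^{(\lambda)}\|$ small, and set $L=\|\lambda\|_\infty$, so $\|g\|\approx L$. Passing to a near-geodesic $\alpha$ from $p$ to $q$ and invoking Lemma~\ref{lem:2.1}, the increments of $g$ satisfy $|g(z_1)-g(z_2)|\ge\|g\|(d(z_1,z_2)-\eps)$ along $\alpha$; transferring this through the sup-norm proximity $\|g-g^{(\lambda)}\|$ shows that $g^{(\lambda)}$ has average slope close to $L$ over every non-negligible subarc of $\alpha$. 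Because $g^{(\lambda)}$ is bounded and \emph{vanishes in the gaps between the separated cones}, such an $\alpha$ can neither cross a gap nor be long, so it stays inside one cone $B(p_n,\rho_n)$, where $g^{(\lambda)}=\lambda_n f_n(d(p_n,\cdot))$. There the slope demand forces $\alpha$ to be essentially radial and $f_n'$ to have constant sign a.e.\ on a subinterval of $[0,\rho_n]$, contradicting the balanced oscillation of $f_n$. When $M$ is metrically convex, $(p,q)$ is joined by a genuine geodesic $\Gamma$ along which $g$ attains over the whole interval, so $g|_\Gamma'=\|g\|$ a.e.\ and the contradiction is immediate; the length case only replaces this exact identity by its averaged, near-geodesic counterpart, and it is precisely there that the estimates are most delicate.

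It remains to assemble the two copies. For (a), Lemma~\ref{lemma:new-extension-lemma-[0,1]} makes $\lambda\mapsto g^{(\lambda)}$ a linear isometry of $\ell_\infty$ into $\lip(M)$ whose range, minus $0$, lies in $\lip(M)\setminus\overline{\sna(M)}$ by the previous paragraph, giving the $\ell_\infty$ statement. For (b) I would restrict the same embedding to $c_0$: then $L=\|\lambda\|_\infty$ is attained at some index $n_0$, and since $|f_{n_0}'|=1$ a.e.\ the functional witnessing the norm-attainment of $f_{n_0}$ on $\calf([0,\rho_{n_0}])$ lifts, through the isometric inclusion of the free space over the geodesic issuing from $p_{n_0}$ into $\calf(M)$, to a $\mu\in\calf(M)$ with $\langle g^{(\lambda)},\mu\rangle=|\lambda_{n_0}|=L=\|g^{(\lambda)}\|$; hence $g^{(\lambda)}\in\na(\calf(M))$. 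This is exactly why (b) is limited to $c_0$: for $\lambda\in\ell_\infty\setminus c_0$ the norm need not be attained as a functional. The resulting range is an isometric, hence closed, copy of $c_0$ inside $(\na(\calf(M))\setminus\overline{\sna(M)})\cup\{0\}$, completing the proof.
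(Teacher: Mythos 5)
Your overall architecture is the paper's: cone-supported blocks $g_n=f_n(d(p_n,\cdot))$ fed into Lemma~\ref{lemma:new-extension-lemma-[0,1]}, Lemma~\ref{lem:2.1} along near-geodesics to keep $g^{(\lambda)}$ away from $\overline{\sna(M)}$, and, for (b), Lemma~\ref{lemma:equiv-sna_naf}(b) together with the lift $\widehat{\gamma}$ of a geodesic into $\calf(M)$. The one real difference is the one-dimensional profile, and that is exactly where your argument has a genuine gap. You take $|f_n'|=1$ a.e.\ with both sign sets meeting every subinterval in positive measure; the paper instead takes $f_n'=\chi_{A_n}-\chi_{B_n}$ with $A_n\cup B_n$ nowhere dense, so that $f_n'$ \emph{vanishes identically on some interval inside every subinterval}. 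This choice is not cosmetic. With your profile, exclusion from $\overline{\sna(M)}$ cannot be read off a single near-geodesic, which is what your sketch does: writing $\delta:=\|h-g^{(\lambda)}\|$ for the approximant $h\in\sna(M)$, Lemma~\ref{lem:2.1} carries the additive error $\|h\|\eps$, so from one fixed curve $\alpha_\eps$ all you can extract on the interval $[a,b]$ of radii it traverses is an estimate of the form
\[
\min\bigl( |\{f_n'=1\}\cap[a,b]| ,\ |\{f_n'=-1\}\cap[a,b]| \bigr) \le \frac{\delta}{\|\lambda\|_\infty}(b-a) + C\eps ,
\]
i.e.\ the minority sign set is \emph{small}---which is perfectly compatible with its having positive measure in every interval. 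So balanced oscillation is not contradicted, and your claim that the slope demand forces $f_n'$ to have constant sign a.e.\ on a subinterval does not follow as stated. The step can be repaired, but only by reversing the quantifiers: fix radii $s<t$, use the intermediate value theorem on $t\mapsto d(p_n,\alpha_\eps(t))$ to realize them by points $z_\eps,w_\eps$ on \emph{every} curve $\alpha_\eps$ (such points automatically lie in the $n$-th ball, where $g^{(\lambda)}=\lambda_nf_n(d(p_n,\cdot))$), and let $\eps\to0$ over the whole family of curves to get the clean inequality $|\lambda_n|\,|f_n(t)-f_n(s)|\ge(\|h\|-\delta)(t-s)$ for all $s<t$; this forces $f_n$ to be strictly monotone on $[a,b]$ and only then contradicts balance. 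None of this limiting argument appears in your proposal (you flag the spot as ``delicate'' and leave it open), whereas the paper's flat-spot profile is engineered precisely so that one curve and two points $z_0,w_0$ suffice. Relatedly, your claim that the curve ``can neither cross a gap nor be long, so it stays inside one cone'' is both unjustified---the gaps $d(p_n,p_m)-\rho_n-\rho_m$ are positive but need not be bounded below, e.g.\ when $M$ is compact---and unnecessary once the intermediate value theorem is used.

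A second, smaller gap: you assert the blocks are ``automatically'' admissible, but Lemma~\ref{lemma:new-extension-lemma-[0,1]} requires $\|g_n\|=1$ as a hypothesis, and composing $f_n$ with $d(p_n,\cdot)$ only gives $\|g_n\|\le 1$ in a general metric space. Equality is where the length/convexity hypothesis enters at the level of norms, and proving it costs the near-geodesic computation (the $\gamma_n^s$ estimates) that occupies a substantial part of the paper's proof of (a); without it, $\lambda\mapsto g^{(\lambda)}$ is not known to be isometric. Part (b) of your proposal, by contrast, is correct and coincides with the paper's: restrict to $c_0$, pick $n_0$ with $|\lambda_{n_0}|=\|\lambda\|_\infty$, invoke Lemma~\ref{lemma:equiv-sna_naf}(b) (which needs only $|f_{n_0}'|=\|f_{n_0}\|$ on a set of positive measure, so both profiles qualify), and push the norming element of $\calf([0,\rho_{n_0}])$ into $\calf(M)$ through the isometric geodesic embedding.
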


\begin{proof}
(a): Since $M$ is arc-connected, there exist disjoint sequences of distinct points $\{p_n\}_{n=1}^{\infty}\subset M$ and $\{q_n\}_{n=1}^{\infty}\subset M\setminus \{p_n\}_{n=1}^{\infty}$ such that $d(p_n, p_m)> d(p_n, q_n) + d(p_m, q_m)$ for all $(n,m)\in\widetilde{M}$. Let $u_n:M\rightarrow [0,\, d(p_n, q_n)]$ be the surjective norm-one Lipschitz function given by
$$
u_n(p):= \max\{0,\, d(p_n, q_n) - d(p_n, p)\},\quad \text{for all }p\in M.
$$ 
Let $A_n$ be a nowhere-dense subset of $[0,\, d(p_n,q_n)/2]$ with positive measure, and let $B_n=A_n+d(p_n,q_n)/2:=\{x+d(p_n,q_n)/2:\, x\in A_n\}$. Define $f_n\in\lip([0,d(p_n, q_n)])$ as the Lipschitz function such that 
\[f_n'=\chi_{A_n} - \chi_{B_n} \ \ \ \text{a.e.},\]  where $\chi_A$ is the characteristic function on a set $A$ which has the value 1 on $A$ and 0 elsewhere.

For each $n\in\bbn$, let $g_n:=f_n\circ u_n$, and note that $\| g_n\|\leq \|f_n\|= 1$. We will show that $\| g_n\|=1$. Fix $n\in\bbn$ and let $x,y \in [0,  d(p_n, q_n)]$ be given. For each $t>0$, there exists some $0<s=s(t)<t$ and some rectifiable curve $\gamma_n^s:[0, (1+s) d(p_n,q_n) ] \rightarrow M$ joining $p_n$ and $q_n$ and with $\operatorname{length}(\gamma_n^s)=(1+s) \, d(p_n, q_n)$. Find $p,q \in \gamma_n^s ([0, (1+s)d(p_n, q_n)])$ such that $u_n(p)=x$ and $u_n(q)=y$. Note that 
\begin{align}
&d(p_n,q_n) \leq d(p_n,q)+d(q,q_n), \nonumber \\
&d(p_n,p)+d(p,q)+d(q,q_n)\leq \operatorname{length}(\gamma_n^s)=  (1+s)d(p_n, q_n); \label{pn_p_q_qn}
\end{align}
thus
\[
d(p_n,q)-d(p_n,p) \geq d(p,q)-s\, d(p_n, q_n). 
\]
Changing the role of $p$ and $q$, we conclude that 
\[
|d(p_n, p)-d(p_n,q)| \geq d(p,q)-s\, d(p_n, q_n). 
\]
Moreover, by \eqref{pn_p_q_qn}, we have 
\[
d(p_n,q_n)-d(p_n, p) \geq d(p,q)+d(q,q_n)-s \,d(p_n,q_n)  \geq d(p,q) -s \,d(p_n,q_n)  
\]
and similarly,  
\[
d(p_n,q_n)-d(p_n,q)   \geq d(p,q) -s \,d(p_n,q_n).   
\]
Therefore, we can observe that 
\[
d(p,q)- s\, d(p_n, q_n)\leq |u_n(p)-u_n(q)|=|x-y|. 
\]
Having also in mind that
\[
d(p,q)- s\, d(p_n, q_n)\leq |x-y| = |u_n(p)-u_n(q)| \leq d(p,q) \leq (1+s) d(p_n, q_n)
\]
observe 
\begin{align*}
\|g_n\| \geq |S(g_n,p,q)| = \frac{|f_n(x)-f_n(y)|}{d(p,q)} &\geq \frac{|f_n(x)-f_n(y)|}{|x-y| +  s\,d(p_n, q_n)} \\
&= |S(f_n,x,y)| \left(1 - \frac{s \, d(p_n, q_n)}{|x-y|+ s\,d(p_n, q_n)}\right) \\
&\geq |S(f_n,x,y)| \left(1 - \frac{s \, d(p_n, q_n)}{ (1+s)d(p_n, q_n)+s\,d(p_n, q_n)}\right) \\
&= |S(f_n,x,y)| \left( 1 - \frac{s}{1+2s} \right).
\end{align*}
Letting $t\rightarrow 0$ (so, $s\rightarrow0$), we conclude that $\|g_n\| \geq |S(f_n,x,y)|$. As $x,y \in [0,d(p_n,q_n)]$ are chosen arbitrarily, we conclude that $\|g_n\|=1$. 

Moreover, by construction,  $\supp(g_n)\subset B(p_n,\, d(p_n, q_n))$. Let $\lambda=(\lambda_n)_n\in \ell_\infty \setminus \{0\}$, and let $g^{(\lambda)}:=\sum_n \lambda_n g_n$. 

First, for each $n \in \mathbb{N}$ and $p \in M$, observe that
$$ 
|g_n(p)| \leq |u_n(p)|= \max\left\{0, d(p_n, q_n) - d(p_n, p)\right\}.
$$
Then Lemma \ref{lemma:new-extension-lemma-[0,1]} shows that $\|g^{(\lambda)}\| = \|\lambda\|_\infty$.

Finally, we will show that $g^{(\lambda)}\notin \overline{\sna(M)}$. Let $h \in \sna(M)\setminus \{0\}$ which strongly attains its norm at $(p,q)\in\widetilde{M}$. Suppose that $\|g^{(\lambda)}-h\| < \frac{\|\lambda\|_\infty}{2}$. Notice that $\|h\| >  \frac{\|\lambda\|_\infty}{2} $. Note that if $g^{(\lambda)}(p)=g^{(\lambda)}(q)$, then 
\[
\|g^{(\lambda)}- h\| \geq \|h\| > \frac{\|\lambda\|_\infty}{2},
\]
which is a contradiction. Thus, we assume that $g^{(\lambda)}(p) \neq g^{(\lambda)}(q)$. Without loss of generality, let us say $g^{(\lambda)}(q) \neq 0$. Then there is $n\in\bbn$ such that $\lambda_n \neq 0$ and $u_n(q) > 0$, which implies that $q\in B(p_n,\, d(p_n, q_n))$. Also, up to rellabeling the points if needed, we can assume that $u_n(q)>u_n(p)\geq 0$. 
Pick $r\in B(p_n,\, d(p_n, q_n))$ such that $0< u_n (r) < u_n(q)$. Since $A_n$ is nowhere dense, there exist $s_0<t_0$ such that $[s_0, t_0]\subset ]u_n(r), u_n(q)[$ and $f_n'$ is identically zero on $[s_0, t_0]$. 
Take $\eps_0>0$ sufficiently small so that 
\[
0< \eps_0 < |t_0-s_0| \frac{\|h\| - \|g^{(\lambda)}- h\|}{\|h\|}
\]
and a rectifiable curve $\alpha_{\eps_0} : [0, d(p,q)+\eps_0] \rightarrow M$ joining $p$ and $q$ such that $\text{length}(\alpha_{\eps_0}) \leq d(p,q) + \eps_0$.
Find $z_0, w_0 \in \alpha_{\eps_0} ([0, d(p,q)+\eps_0])\cap B(p_n,\, d(p_n, q_n))$ such that $u_n(z_0)=s_0$ and $u_n(w_0)=t_0$. 
Then 
\[
|t_0-s_0| = |u_n(w_0)-u_n(z_0)| \leq d(w_0, z_0).
\] 
By Lemma \ref{lem:2.1}, we obtain $|h(z_0)-h(w_0)| \geq \|h\| ( d(z_0,w_0) -  \eps_0)$.
Thus, 
\begin{align*}
|g^{(\lambda)}(z_0)-g^{(\lambda)}(w_0)| &\geq |h(z_0)-h(w_0)| - \|g^{(\lambda)}-h\| d(z_0, w_0) \\
&\geq \|h\| (d(z_0,w_0)-\eps_0) - \|g^{(\lambda)}-h\| d(z_0,w_0) \\
&\geq \left(\|h\|-\|g^{(\lambda)}-h\| - \frac{\eps_0 \|h\|}{|t_0 - s_0|} \right) d(z_0,w_0) \\ 
&> \left(\|h\|-\|g^{(\lambda)}-h\| -( \|h\| -\|g^{(\lambda)}-h\|) \right) d(z_0,w_0)=0. 
\end{align*}
Since $z_0,w_0\in B(p_n, d(p_n, q_n))$, this shows that $f_n (s_0) \neq  f_n (t_0)$, which contradicts that $f_n'$ is identically zero on $[s_0,t_0]$. Therefore, we conclude that $\|g^{(\lambda)}-h\| > \frac{\|\lambda\|_\infty}{2}$ for every $h \in \sna(M)$.

(b): Next, suppose that $M$ is metrically convex. Consider the functions $u_n:M \rightarrow [0,d(p_n,q_n)]$, $f_n \in \lip ([0,d(p_n,q_n)])$, and $g_n = f_n \circ u_n \in \lip(M)$ for $n \in \mathbb{N}$ as above. 
To finish the proof, it is enough to see that given $\lambda=(\lambda_n)_n\in c_0 \setminus \{0\}$, the element $g^{(\lambda)}=\sum_n \lambda_n g_n \in \mathcal{F}(M)^*$ attains its norm at a norm-one element in $\fm$.
Fix $n \in \mathbb{N}$ so that $|\lambda_n|=\|\lambda\|_\infty$. 
Due to metric convexity of $M$, we can consider $\gamma_n : [0,d(p_n, q_n)] \rightarrow M$ such that $\gamma_n (0)=q_n,\, \gamma_n (d(p_n,q_n))=p_n$, and 
\[
|x-y| = d(\gamma_n (x), \gamma_n (y)), \quad x,y \in [0,d(p_n,q_n)].
\]
That is, $\gamma_n ([0,d(p_n,q_n)])$ is an isometric copy of $[0,d(p_n,q_n)]$ in $M$. For $t \in [0, d(p_n,q_n)]$, note from the definition of $u_n$ that 
\begin{align*}
u_n (\gamma_n (t)) = d(p_n,q_n)-d(p_n, \gamma_n (t))  = d(q_n, \gamma_n(t)) = d( \gamma_n(0), \gamma_n(t)) = |0-t| = t. 
\end{align*}
Observe from Lemma \ref{lemma:equiv-sna_naf} that $f_n \in \lip([0,d(p_n,q_n)]) = \mathcal{F}( [0,d(p_n,q_n)] )^*$ attains its norm at $\Psi_n^{-1} (h_n) \in \mathcal{F}([a,b])$, where $\Psi_n : \mathcal{F}([0,d(p_n,q_n)]) \rightarrow L_1 ([0,d(p_n,q_n)])$ is the canonical isometric isomorphism as in \eqref{eq:psi} and 
$h_n \in L_1([0,d(p_n,q_n)])$ is given by  
\[
h_n (t) =\frac{1}{2}\left(\frac{1}{m(A_{n})}\chi_{A_n} - \frac{1}{m(A_{n})}\chi_{B_n}\right)
\]

Consider the extension $\widehat{\gamma}_n :  \mathcal{F}([0,d(p_n,q_n)]) \rightarrow \mathcal{F}(M)$ of the isometry $\gamma_n : [0,d(p_n,q_n)] \rightarrow M$. 
Now, observe that  
\begin{align*}
\langle g^{(\lambda)}, \, \widehat{\gamma}_n (\psi_n^{-1}(h_n))\rangle &= \langle (\widehat{\gamma}_n)^* (g^{(\lambda)}), \, \psi_n^{-1}(h_n)\rangle \\
&= \langle g^{(\lambda)} \circ \gamma_n, \,\psi_n^{-1}(h_n)\rangle \\
&= \lambda_n \langle f_n\circ u_n \circ \gamma_n, \psi_n^{-1}(h_n)\rangle= \lambda_n \langle f_n, \psi_n^{-1}(h_n)\rangle = \lambda_n;
\end{align*}
hence $g^{(\lambda)}$ attains its norm at $\widehat{\gamma}_n (\psi_n^{-1}(h_n))\rangle$ as desired.
\end{proof}

Recall that an $\mathbb{R}$-tree is a metric space $M$ satisfying the following two conditions:
\begin{enumerate}
\itemsep0.25em
\item For any $a, b \in M$, there exists a unique isometry $\phi : [0,d(a,b)]\rightarrow M$ such that $\phi(0)=a$ and $\phi(d(a,b)) = b$. 
\item Any one-to-one continuous mapping $\varphi : [0,1] \rightarrow M$ has same range as the isometry $\phi$ associated to the points $a=\varphi(0)$ and $b=\varphi(1)$. 
\end{enumerate} 

We say that a subset $A$ of an $\mathbb{R}$-tree $M$ is \textit{measurable} whenever $\phi_{xy}^{-1} (A)$ is Lebesgue measurable for any $x,y\in M$, where $\phi_{xy}$ denotes the unique isometry associated to $x$ and $y$ as above. If $A$ is measurable and $S$ is a segment $[x,y]$ (that is, the image of $\phi_{xy}$), we write $\lambda_S (A)$ for $\lambda(\phi_{xy}^{-1} (A))$ where $\lambda$ is the Lebesgue measure on $\mathbb{R}$. We denote by $\mathcal{R}$ the set of those subsets of $M$ which can be written as a finite union of disjoint segments, and for $R = \cup_{k=1}^n S_k$ in $\mathcal{R}$ (disjoint union), we put $\lambda_R (A) := \sum_{k=1}^n \lambda_{S_k} (A)$. Now, 
\[
\lambda_M (A)  := \sup_{R \in \mathcal{R}} \lambda_R (A)
\]
defines the \textit{length measure} of a measurable subset $A$ of $M$. 

It is also observed in \cite[Theorem 2.3]{CCGMR19} that $\sna(M)$ is not norm-dense in $\lip(M)$ when $M$ is a closed subset of a pointed $\mathbb{R}$-tree with positive length measure containing the distinguished point. We prove that it is actually possible to embed isometrically $\ell_\infty$ (resp., $c_0$) into the complement of the norm-closure of $\sna(M)$ in $\lip(M)$ (resp., $\na(\mathcal{F}(M))$).

\begin{theorem}
Let $M$ be a closed subset of a pointed $\mathbb{R}$-tree $T$ containing the distinguished point. If $M$ has a positive length measure, then 
    \[
( \lip (M)   \setminus\overline{\sna(M)})\cup\{0\} \text{ isometrically contains $\ell_\infty$.}
\]
Moreover, 
\[
(\na(\mathcal{F}(M) )  \setminus\overline{\sna(M)})\cup\{0\} \text{ isometrically contains $c_0$.}
\]
\end{theorem}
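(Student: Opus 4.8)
The plan is to replay the construction proving Theorem \ref{thm:main_sna_c0}, but now letting the ambient $\mathbb{R}$-tree $T$ supply the geodesics that $M$ itself lacks, and letting the hypothesis $\lambda_M(M)>0$ supply infinitely many ``fat'' spots on which to build Cantor-type functions. The point is that neither part of Theorem \ref{thm:main_sna_c0} applies verbatim: a geodesic of $T$ joining two points of $M$ may leave $M$, so $M$ is in general neither a length space nor metrically convex, and $M$ may contain no nondegenerate segment at all. The positive length measure is exactly what repairs this, and the proof of Theorem \ref{thm:main_sna_c0} serves as a template, with Lemma \ref{lem:2.1} applied not on $M$ but on $T$ after a McShane extension.

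Setup of the spots. By the definition of $\lambda_M$ there is a segment $S=[x_0,y_0]$ with $x_0,y_0\in M$ whose parameter set $E:=\phi_{x_0y_0}^{-1}(M\cap S)$ has positive Lebesgue measure; fix a density point $\tau^\ast$ of $E$. I choose pairwise disjoint closed parameter subintervals $I_n$ accumulating at $\tau^\ast$, shrinking fast enough that, writing $p_n$ and $q_n$ for the endpoints of $\phi_{x_0y_0}(I_n)$, the separation $d(p_n,p_m)>d(p_n,q_n)+d(p_m,q_m)$ holds for $n\neq m$; density $1$ at $\tau^\ast$ makes this compatible with $\lambda(E\cap I_n)>0$ for all $n$. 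With $u_n(p):=\max\{0,d(p_n,q_n)-d(p_n,p)\}$ as before, $u_n$ is an isometry on $\phi_{x_0y_0}(I_n)$, so $u_n(M\cap\phi_{x_0y_0}(I_n))$ is a positive-measure subset of $[0,d(p_n,q_n)]$; inside it I pick disjoint nowhere-dense closed sets $A_n,B_n$ of positive measure, let $f_n\in\lip([0,d(p_n,q_n)])$ be the primitive of $\chi_{A_n}-\chi_{B_n}$, and set $g_n:=f_n\circ u_n$. As in Theorem \ref{thm:main_sna_c0} each $g_n$ is radial about $p_n$, is supported in $B(p_n,d(p_n,q_n))$, and has $\|g_n\|=1$, the lower bound being witnessed inside $M$ because $A_n\cup B_n$ lies in the $u_n$-image of $M$. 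The separation then feeds Lemma \ref{lemma:new-extension-lemma-[0,1]} to give $\|g^{(\lambda)}\|=\|\lambda\|_\infty$ for $g^{(\lambda)}:=\sum_n\lambda_ng_n$, which is the required isometric embedding at the level of norms (for $\lambda\in\ell_\infty$ in the first statement and $\lambda\in c_0$ in the second).

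The heart is to show $g^{(\lambda)}\notin\overline{\sna(M)}$. Suppose $h\in\sna(M)$ attains strongly at $(x,y)$ with $\|h-g^{(\lambda)}\|$ small; then $\|h\|>\|\lambda\|_\infty-\varepsilon$ and $|S(g^{(\lambda)},x,y)|>\|\lambda\|_\infty-2\varepsilon$. A routine strengthening of the estimate \eqref{lem:ga1} localizes this near-norming pair: both $x,y$ must lie deep in a single ball $B(p_n,d(p_n,q_n))$ with $|\lambda_n|$ close to $\|\lambda\|_\infty$, and the near-maximal slope forces $(x,y)$ to be almost radial about $p_n$, i.e.\ $d(x,y)\approx|u_n(x)-u_n(y)|$. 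Passing to the McShane extension $\widehat h\in\lip(T)$, which preserves the norm and still strongly attains at $(x,y)$ inside the geodesic space $T$, Lemma \ref{lem:2.1} shows that $\widehat h$ has slope $\|h\|$ along the exact geodesic $[x,y]\subseteq T$. On the other hand $g^{(\lambda)}=\lambda_n f_n\circ u_n$ is flat wherever $u_n$ lands in one of the (dense) intervals on which $f_n'\equiv 0$. The contradiction will be reached by exhibiting two points $z_0,w_0\in M\cap[x,y]$ with $g^{(\lambda)}(z_0)=g^{(\lambda)}(w_0)$ and $d(z_0,w_0)$ bounded below, since then $\|h-g^{(\lambda)}\|\geq|S(h,z_0,w_0)|-|S(g^{(\lambda)},z_0,w_0)|$ is large.

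I expect this last step to be the main obstacle, precisely because the attaining geodesic $[x,y]$ lives in $T$ while $M$ meets it only in a nowhere-dense (albeit positive-measure) set, so the comparison points must be found inside $M$ and not merely in $T$. My plan to overcome it is twofold. First, choosing $A_n$ (and $B_n$) with upper density bounded away from $1$ on all subintervals of length comparable to $d(p_n,q_n)$ guarantees that $f_n$ has no long secant of slope near $1$, ruling out the degenerate case in which $M\cap[x,y]$ is essentially $\{x,y\}$ separated by a single long gap. Second, in the remaining case the radiality of $g^{(\lambda)}$ together with $\lambda_M(M)>0$ forces the trace $M\cap[x,y]$ to carry positive length measure concentrated near $u_n^{-1}(A_n\cup B_n)$; since $A_n\cup B_n$ is nowhere dense, between two such concentration sites there sit points of $M\cap[x,y]$ whose $u_n$-values fall in a common flat interval of $f_n$, and these are the desired $z_0,w_0$. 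Finally, for the second (stronger) statement one notes that when $\lambda\in c_0\setminus\{0\}$ the value $\|\lambda\|_\infty$ is attained, say at $n_0$, so that $\{|\dot g^{(\lambda)}|=\|g^{(\lambda)}\|\}\supseteq A_{n_0}\cup B_{n_0}$ has positive length measure; by the $\mathbb{R}$-tree analogue of Lemma \ref{lemma:equiv-sna_naf}(b) this is exactly the condition for $g^{(\lambda)}$ to attain its norm as a functional on $\mathcal F(M)$, which places the copy of $c_0$ inside $\na(\mathcal F(M))\setminus\overline{\sna(M)}$.
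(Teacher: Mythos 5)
Your construction of the blocks $g_n=f_n\circ u_n$ and the norm identity $\|g^{(\lambda)}\|=\|\lambda\|_\infty$ are fine, and so is the McShane/Lemma \ref{lem:2.1} step; but the heart of your argument --- that any pair $(x,y)$ at which a nearby $h\in\sna(M)$ strongly attains admits two separated points $z_0,w_0\in M\cap[x,y]$ with $g^{(\lambda)}(z_0)=g^{(\lambda)}(w_0)$ --- is unjustified, and in fact false for radial functions on a tree. The problem is that $g_n$ depends only on the distance to $p_n$, so its slope between $x$ and $y$ is governed by the mass of $A_n$ in the \emph{parameter} interval $[u_n(x),u_n(y)]$, and that mass need not sit on the geodesic $[x,y]$ at all: it only needs to sit at matching radii somewhere in the ball. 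Concretely, let $M\cap S$ be a fat Cantor set $C$, let $\beta$ be the midpoint of a gap $(a,b)$ of $C$ (so $\beta\notin M$), and let $T$ have a branch at $\beta$ carrying a single point $m\in M$ at distance $r\gg b-a$ from $\beta$. The pair $(a,m)$ is exactly radial, $|u_n(a)-u_n(m)|=d(a,m)$, and its parameter interval is filled by the parameters of the trunk points in $(b,\beta+r)$, which are \emph{not} on $[a,m]$ (they are shadowed by the branch). If $A_n$ is concentrated there --- and near any density point of $A_n$ the adversary can arrange such gaps, branch points and hair lengths, placing hairs at the midpoints of all gaps of $C$, so no choice of $A_n$ avoids this --- then $|S(g^{(\lambda)},a,m)|$ is as close to $\|\lambda\|_\infty$ as one likes, while $M\cap[a,m]=\{a,m\}$ and $g^{(\lambda)}(a)\neq g^{(\lambda)}(m)$. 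Neither of your remedies applies: the density restriction on $A_n$ acts only at scale comparable to $d(p_n,q_n)$, whereas these pairs live at small scales, and the assertion that radiality plus $\lambda_M(M)>0$ forces the trace $M\cap[x,y]$ to carry positive measure is precisely what fails. Your proof therefore produces no contradiction for a strongly attaining $h$ whose norming pair is such a shadowed pair, and nothing in your construction rules such $h$ out.

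This is exactly the difficulty the paper's proof is organized around, and it is solved there by never using radial functions: the paper splits into the case where $M\cap S$ contains a segment (and pushes the functions of Theorem \ref{thm:main_sna_c0}(a) forward through the \emph{metric projection} $\pi_0$ of $T$ onto that segment) and the case where it contains none (where the blocks are $h_n(p)=\lambda_T(M\cap[u_n,p_n])-\lambda_T(M\cap[p_n,p])$, again composed with $\pi_n$). Composition with the projection makes every block constant on each branch hanging off the reference segment, so a norming pair of $h$ either projects to a single point (immediate contradiction) or projects to a nondegenerate subinterval of the segment; in the latter case $[\pi_n(x),\pi_n(y)]\subseteq[x,y]$, and a gap of $M$ inside it supplies the equal-value points \emph{on the geodesic}. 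If you want to salvage your outline, you should replace $f_n\circ u_n$ by $f_n\circ\pi_n$ (or, in the no-segment case, use the length-measure primitives, whose flatness across all gaps of $M$ is automatic). Separately, your final sentence on the $c_0$ part is also too quick: there is no off-the-shelf ``$\mathbb{R}$-tree analogue of Lemma \ref{lemma:equiv-sna_naf}(b)''; the paper has to invoke Godard's isometry $\operatorname{Lip}_{0'}(A)\cong L_\infty(A,\mu_A)$, where $\mu_A$ carries atoms, pass to the non-atomic part of $A$, and exhibit an explicit norming element of $L_1(A,\mu_A)$ --- a genuine argument rather than a formal transfer.
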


\begin{proof}
As $M$ has positive length measure, there is a segment $S=[x_0,y_0] \subseteq T$ such that $\lambda_T (M \cap S)>0$, where $\lambda_T$ is the length measure on $T$ defined as in \cite{Godard10}. We distinguish two cases:

\underline{Case 1}: Assume that there is a segment $[u_0, v_0] \subseteq M \cap S$. By Theorem \ref{thm:main_sna_c0}, there exists $\{f_n\}_n \subseteq \lip ([u_0,v_0])$ such that if $\lambda=(\lambda_n)_n \in \ell_\infty$, then $f^{(\lambda)} := \sum_{n} \lambda_n f_n$ satisfies that $\|f^{(\lambda)}\| = \|\lambda\|_\infty$ and $\|f^{(\lambda)} - g \| \geq \frac{\|\lambda\|_\infty}{2}$ for every $g \in \sna([u_0,v_0])$. Moreover, if $\lambda \in c_0$, then $f^{(\lambda)} \in  \na (\mathcal{F}([u_0,v_0])$. Consider as in \cite[Theorem 2.3]{CCGMR19} the metric projection $\pi_0 : T \rightarrow [u_0,v_0]$ which satisfies that 
\begin{equation}\label{eq:metric_projection}
    d(x,y)=d(x,\pi_0(x)) + d(\pi_0 (x),y) \, \text{ for all } x \in T, y \in [u_0,v_0]. 
    \end{equation}

For each $n \in \mathbb{N}$, define $\widetilde{f_n} \in \lip(M)$ by $\widetilde{f_n}(p)=f_n(\pi_0(p))$ for every $p \in M$. Let $\lambda=(\lambda_n)_n \in \ell_\infty$ be given and put $\widetilde{f^{(\lambda)}} := \sum_n \lambda_n \widetilde{f_n} $. Since $d(\pi_0(p), \pi_0 (q)) \leq d(p,q)$ for every $(p,q) \in \widetilde{M}$, we observe that $\| \widetilde{f^{(\lambda)}}\| = \|\lambda\|_\infty$. We will show that $\widetilde{f^{(\lambda)}}\notin\overline{\sna(M)}$. Assume to the contrary that there is $g \in \sna(M)$ such that $\| \widetilde{f^{(\lambda)}} - g \| < \frac{\|\lambda\|_\infty}{2}$. Say $S(g,x,y)=\|g\|$ for some $(x,y) \in \widetilde{M}$. If $\pi_0 (x)=\pi_0 (y)$, then this implies that 
\begin{equation}\label{eq:same_pi}
\frac{\|\lambda\|_\infty}{2} \geq \left|S \left( \widetilde{f^{(\lambda)}} - g, x,y \right) \right| = S(g,x,y) >  \frac{\|\lambda\|_\infty}{2},
\end{equation}
which is a contradiction. Thus, $\pi_0 (x) \neq \pi_0 (y)$; hence $S(g \vert_{[u_0,v_0]}, \pi_0 (x), \pi_0 (y)) = \|g\|$. This implies that 
\[
\frac{\|\lambda\|_\infty}{2} > \left\| \widetilde{f^{(\lambda)}} - g \right\| \geq \left\| f^{(\lambda)} - g \vert_{[u_0,v_0]} \right\| \geq \frac{\|\lambda\|_\infty}{2}, 
\]
which is again a contradiction. Thus, we conclude that $\| \widetilde{f^{(\lambda)}} -g\| \geq \frac{\|\lambda\|_\infty}{2}$ for every $g \in \sna(M)$. In particular, $\widetilde{f^{(\lambda)}} \not\in \overline{\sna(M)}$.

To see that $\widetilde{f^{(\lambda)}} \in \na(\mathcal{F}(M))$ provided that $\lambda \in c_0 \setminus \{0\}$, let us take $\mu \in \mathcal{F}([u_0,v_0])$ such that 
$\langle f^{(\lambda)}, \mu \rangle = \|\lambda\|_\infty$. If we consider an isometric embedding $\gamma : [u_0, v_0] \rightarrow M$, then it is routine to check that $\widetilde{f^{(\lambda)}}$ attains its norm as a functional at $\widehat{\gamma} (\mu)$, where $\widehat{\gamma} : \mathcal{F}([u_0,v_0]) \rightarrow \mathcal{F}(M)$ is the canonical extension of $\gamma$; consequently, $\widetilde{f^{(\lambda)}} \in \na (\mathcal{F}(M)) \setminus \overline{\sna(M)}$.

\underline{Case 2}: Assume that no segment is contained in $M \cap S$. Let $\{[u_n, v_n]\}_{n=1}^\infty \subseteq S$ be a sequence of pairwise disjoint closed intervals such that $\lambda_T(M \cap [u_n,v_n])>0$ for every $n \in \mathbb{N}$. For each $n \in \mathbb{N}$, pick $p_n \in [u_n,v_n]$ such that $\lambda_T (M \cap [u_n, p_n] ) = \lambda_T (M \cap [p_n, v_n])$ which in turn implies that  
\[
\lambda_T (M \cap [u_n, p_n] ) = \lambda_T (M \cap [p_n, v_n]) = \frac{1}{2} \lambda_T (M \cap [u_n, v_n]) > 0.
\]
Define $h_n : [u_n, v_n] \rightarrow \mathbb{R}$ by 
\[
h_n (p) = \lambda_T (M \cap [u_n, p_n]) - \lambda_T (M \cap [p_n, p]) \, \text{ for all } p \in [u_n,v_n].
\]
Observe that $h_n$ is a norm-one Lipschitz function and $h_n(u_n) = h_n(v_n)=0$. For each $n \in \mathbb{N}$, let $\pi_n : T \rightarrow [u_n,v_n]$ be the metric projection as in \eqref{eq:metric_projection}, and define $\widetilde{h_n} : M \rightarrow \mathbb{R}$ by $\widetilde{h_n}(p)=h_n (\pi_n (p))$ for every $p \in M$. Notice that $\|\widetilde{h_n}\|=1$ for every $n \in \mathbb{N}$, and 
\[
\widetilde{h_n} (\pi_m (p)) = 0
\]
for every $p \in M$ provided that $n \neq m$. 

Let $\lambda = (\lambda_n)_n \in \ell_\infty \setminus\{0\}$ and put $\widetilde{h^{(\lambda)}} = \sum_n \lambda_n \widetilde{h_n}$. It is clear that $\|\widetilde{h^{(\lambda)}}\| \geq \|\lambda\|_\infty$. To see that it is an equality, let $(p,q) \in \widetilde{M}$ be given. Note that if $p \not\in ]u_n, v_n[$ for every $n \in \mathbb{N}$, then $\widetilde{h^{(\lambda)}}(p) = 0$. 
If $p \in ]u_n, v_n[$ for some $n \in \mathbb{N}$, then 
\begin{align*}
|\widetilde{h^{(\lambda)}} (p)| &= |\lambda_n| | \lambda_T (M \cap [u_n, p_n]) - \lambda_T (M \cap [p_n, p]) | 
\end{align*}
Consequently, 
\begin{align*}
&|\widetilde{h^{(\lambda)}} (p)| \leq |\lambda_n| \lambda_T (M \cap [u_n, p]) \leq |\lambda_n| \lambda_T (M \cap [p, v_n]) \text{ if } p \in [u_n,p_n]  \\ 
&|\widetilde{h^{(\lambda)}} (p)| \leq |\lambda_n| \lambda_T (M \cap [p, v_n]) \text{ if } p \in [p_n, v_n];
\end{align*}
hence $|\widetilde{h^{(\lambda)}} (p)| \leq |\lambda_n| \lambda_T (M \cap [p, v_n])$ for $p \in [u_n, v_n]$. 
Suppose that $p \in ]u_n, v_n[$ and $q \in ]u_m, v_m[$ for some $n, m \in \mathbb{N}$. By changing $p$ and $q$ if necessary, we may assume that $[v_n, u_m] \subseteq [u_n, v_m]$. 
Observe that 
\begin{align*}
    |S( \widetilde{h^{(\lambda)}}, p,q)| 
    \leq \frac{|\lambda_n| \lambda_T (M \cap [p, v_n]) + |\lambda_m| \lambda_T (M \cap [u_m, q])  }{d(p,q)} \leq \max \{ |\lambda_n| , |\lambda_m| \}. 
\end{align*}

The other cases, i.e., 
\begin{itemize}
\itemsep0.25em
\item $p,q  \in ]u_n, v_n[$ for some $n \in \N$;  \item $p \in ]u_n, v_n [$ for some $n \in \N$ while $q \not\in ]u_m, v_m[$ for every $m \in \N$
\end{itemize} 
also can be handled similarly; so we conclude that $\| \widetilde{h^{(\lambda)}}\| = \|\lambda\|_\infty$.

Next, we claim that $\widetilde{h^{(\lambda)}} \not\in \overline{{\sna(M)}}$. To this, assume again to the contrary that $\| \widetilde{h^{(\lambda)}} - g \| < \frac{\|\lambda\|_\infty}{2}$ for some $g \in \sna(M)$. Let $n \in \N$ be given and say $|S(g,p,q)|=\|g\|$ for some $(p,q)\in\widetilde{M}$. If $\pi_n (p) = \pi_n (q)$ for every $n \in \N$, then as in \eqref{eq:same_pi}, we have
\[
\|\widetilde{h^{(\lambda)}}-g\| \geq |S(g,p,q)|=\|g\| > \frac{\|\lambda\|_\infty}{2},
\]
which is a contradiction. So, we suppose that $\pi_n(p)\neq\pi_n (q)$ for some $n \in \N$. Since there is no segment in $M\cap S$, we have that $[\pi_n(p), \pi_n (q)] \not\subseteq M \cap [u_n, v_n]$. Thus, we can find $(z_n, w_n) \in \widetilde{M}$ such that $[z_n, w_n] \subseteq ]\pi_n(p), \pi_n (q)[ \, \setminus (M\cap [u_n,v_n])$. Notice that 
\begin{align*}
\widetilde{h_n} (z_n) = h_n (z_n) &= \lambda_T (M \cap [u_n, p_n]) - \lambda_T (M \cap [p_n, z_n]) \\
&= \lambda_T (M \cap [u_n, p_n]) - \lambda_T (M \cap [p_n, w_n]) = h_n (w_n) = \widetilde{h_n} (w_n).
\end{align*} 
It follows that 
\[
\| \widetilde{h^{(\lambda)}} - g \| \geq |S(g, z_n, w_n)| = \|g\| > \frac{\|\lambda\|_\infty}{2},
\]
which is a contradiction. 

Finally, fix $\lambda=(\lambda_n)_n \in c_0 \setminus \{0\}$. We claim that $\widetilde{h^{(\lambda)}} \in \na (\mathcal{F}(M))$. Fix $n \in \N$ so that $|\lambda_n| = \|\lambda\|_\infty$. Consider the $\bbr$-tree $T_n:=[u_n, v_n]$ with distinguished point $0':=u_n$ and its subset $A: = M \cap T_n$. Recall from \cite[Definition 2.2]{Godard10} the positive measure $\mu_A$ defined by 
\[
\mu_A = \lambda_{T_n} \vert_A + \sum_{a \in A} L(a) \delta_a,
\]
where $L(a)=\inf_{x \in A \cap [0', a[ } d(a,x)$ and $\delta_a$ is the Dirac measure on $a$. By definition, note that $h_n \vert_{A} \in  \operatorname{Lip}_{0'} (A)$ has norm one. It is observed in \cite[Theorem 3.2]{Godard10} that 
\[
\Phi : L_\infty (A, \mu_{A})\rightarrow \operatorname{Lip}_{0'} (A)
\]
given by $\Phi(g)(a) = \int_{[0',a]} g \, d\mu_A$ is a weak$^*$-weak$^*$ continuous surjective linear isometry. Let $\Psi :L_1 (A, \mu_A) \rightarrow \mathcal{F}(A)$ be the linear isometry so that $\Psi^* = \Phi^{-1}$. Set $\widecheck{A}:= \{ a \in A: L(a) >0\}$. As $\lambda_{T_n} (A)>0$ and $\widecheck{A}$ is at most countable, we observe that the set $A_0 := A \setminus \widecheck{A}$ is non-empty. Take $\xi \in L_1 (A, \mu_A)$ defined by 
\[
\xi (p) = \begin{cases}
1/\lambda_{T_n} (A_0), \quad &\text{if } p \in A_0 \cap [u_n, p_n] \\
-1/\lambda_{T_n} (A_0),\quad &\text{if } p \in A_0 \, \cap \, ]p_n, v_n],\\
0,\quad &\text{else}.
\end{cases}
\]

It can be checked by definition that 
\[
\Phi ( \chi_{M\cap [u_n, p_n]\setminus \widecheck{A}}-\chi_{M\cap [p_n, v_n]\setminus \widecheck{A}} ) (a)= h_n (a)\, \text{ for each } a \in A,
\]
that is, $\Phi^{-1}(h_n) = \chi_{M\cap [u_n, p_n]\setminus \widecheck{A}}-\chi_{M\cap [p_n, v_n]\setminus \widecheck{A}}$.  
Thus, we can observe that 
\[
\langle h_n \vert_{A}, \Psi (\xi) \rangle = \langle \Phi^{-1} (h_n \vert_{A}) , \xi \rangle = 1; 
\]
hence $h_n \vert_{A} \in \na (\mathcal{F}(A))$. Considering an isometric embedding $\gamma$ from $A$ into $M$ as above, we can conclude that $\langle \widetilde{h^{(\lambda)}}, \widehat{\gamma} (\Psi (\xi)) \rangle = \lambda_n$; 
so $\widetilde{h^{(\lambda)}}$ attains its norm at $\widehat{\gamma} (\Psi (\xi))$. 
\end{proof}

Let us examine another negative result along this line: the set $\sna(\Gamma)$ is not norm-dense in $\lip(\Gamma)$ when $\Gamma$ is the unit sphere in $2$-dimensional Euclidean space \cite[Theroem 2.1]{CGMR21}. Furthermore, it is generalized in \cite[Theorem 1.2]{Chiclana22} that the same is true when $\Gamma$ is the image of an injective $C^1$ curve $\alpha: [0,1]\rightarrow E$, where $E$ is a normed space and $\alpha'$ is non-identically zero. We will prove now that, in fact, the complement the norm-closure of $\sna(\Gamma)$ union zero contains an isometric copy of $\ell_\infty$.

\begin{theorem}\label{thm:C1}
Let $E$ be a normed space, $\alpha : [0,1]\rightarrow E$ an injective $C^1$ curve with $\alpha'$ non-identically zero, and $\Gamma \subseteq E$ its range. Then 
\[
(\lip(\Gamma) \setminus\overline{\sna(\Gamma)})\cup\{0\} \text{ isometrically contains $\ell_\infty$.}
\]
\end{theorem}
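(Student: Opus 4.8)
The plan is to reduce the general $C^1$ curve case to the interval case that was already handled in Theorem \ref{thm:main_sna_c0}, by exploiting the fact that an injective $C^1$ curve with non-vanishing derivative behaves, locally, like an isometric image of a subinterval of $\mathbb{R}$. Since $\alpha'$ is non-identically zero and $\alpha$ is continuous, there is a point $t_0\in[0,1]$ with $\alpha'(t_0)\neq 0$, and by continuity of $\alpha'$ we may pass to a closed subinterval $[a,b]\subseteq[0,1]$ on which $\|\alpha'(t)\|$ is bounded away from zero. After reparametrizing by arc-length (using $\|\alpha'\|>0$ on $[a,b]$), we may assume $\alpha$ is parametrized so that the induced intrinsic metric on the subarc is comparable to the Euclidean length. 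The essential point is that on such a subarc the chordal metric $d(\alpha(s),\alpha(t))=\|\alpha(s)-\alpha(t)\|_E$ and the arc-length metric agree up to first order, so after shrinking the interval further we can arrange that $\Gamma_0:=\alpha([a,b])$ is bi-Lipschitz and in fact $(1+\varepsilon)$-nearly-isometric to an interval, with the distortion controllable.

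The key steps, in order, would be: first, localize to a subarc $\Gamma_0=\alpha([a,b])$ on which $\alpha$ is bi-Lipschitz with constants close to $1$; second, transport the sequence $\{f_n\}_n$ constructed in the proof of Theorem \ref{thm:main_sna_c0}(a) from an interval to $\Gamma_0$ via the curve parametrization, obtaining $\{\widetilde{f_n}\}_n\subseteq\lip(\Gamma_0)$ supported on disjoint subarcs whose mutual ``cone'' separation property \eqref{eq:g_n_bounded_by_cone} holds (using that sufficiently short subarcs of a $C^1$ curve are close to straight segments); third, extend these functions from $\Gamma_0$ to all of $\Gamma$. For the extension, I would use a metric projection or a McShane-type extension onto the subarc together with the disjointness of supports, mirroring the role played by the projections $\pi_0$ and $\pi_n$ in the preceding $\mathbb{R}$-tree theorem, so that for $\lambda=(\lambda_n)_n\in\ell_\infty\setminus\{0\}$ the function $\widetilde{f^{(\lambda)}}=\sum_n\lambda_n\widetilde{f_n}$ satisfies $\|\widetilde{f^{(\lambda)}}\|=\|\lambda\|_\infty$ by an application of Lemma \ref{lemma:new-extension-lemma-[0,1]}. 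Finally, to show $\widetilde{f^{(\lambda)}}\notin\overline{\sna(\Gamma)}$, I would argue exactly as in Theorem \ref{thm:main_sna_c0}(a): given any $h\in\sna(\Gamma)$ attaining its norm at $(p,q)$ with $\|\widetilde{f^{(\lambda)}}-h\|<\|\lambda\|_\infty/2$, use Lemma \ref{lem:2.1} along a near-geodesic joining $p$ and $q$ to locate two points $z_0,w_0$ on a flat piece of some $\widetilde{f_n}$ where the transported $f_n$ has zero derivative, and derive a contradiction from $f_n(s_0)\neq f_n(t_0)$.

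The main obstacle I expect is the passage from the interval to the curve: a general $C^1$ curve in a normed space need not be isometric to an interval, so the clean identification $\lip([a,b])\cong L_\infty([a,b])$ via $f\mapsto f'$ (used crucially in Lemma \ref{lemma:equiv-sna_naf} and in the construction of the $f_n$) is only available approximately. One must quantify the deviation of the chordal metric from the arc-length metric and ensure that the nowhere-dense positive-measure flat pieces of the $f_n$ survive transport, i.e. that the composition still fails to strongly attain its norm on $\Gamma$ and that the norm-separation estimate $\|\widetilde{f^{(\lambda)}}-h\|\geq\|\lambda\|_\infty/2$ is not destroyed by the bi-Lipschitz distortion. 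Handling this carefully, presumably by taking the localizing interval short enough that the distortion is subsumed into the slack already present in the $\varepsilon_0$-argument of Lemma \ref{lem:2.1}, is where the real work lies; the lineability/isometric-$\ell_\infty$ bookkeeping afterwards is routine given Lemma \ref{lemma:new-extension-lemma-[0,1]} and Lemma \ref{lemma:special-c0-in-ell-infty}.
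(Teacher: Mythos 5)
Your reduction to Theorem \ref{thm:main_sna_c0}(a) has a genuine gap, and it sits exactly at the obstacle you flagged: the passage from arc-length to chordal metric cannot be absorbed as a small perturbation, for two structural reasons. First, your final step invokes Lemma \ref{lem:2.1} ``along a near-geodesic joining $p$ and $q$,'' but $\Gamma$ with the metric inherited from $E$ is not a length space: if $h\in\sna(\Gamma)$ attains its norm at a pair $(p,q)$ lying on a genuinely curved portion of $\Gamma$, the \emph{only} rectifiable curves in $\Gamma$ joining $p$ and $q$ contain the whole subarc between them, whose length exceeds $\|p-q\|$ by a \emph{fixed} positive amount determined by the curve. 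Since the $\eps_0$ in that argument must be chosen smaller than a quantity of order $|t_0-s_0|$ (the length of a flat piece inside a small support), and this can be made arbitrarily small while the arclength--chord gap for $(p,q)$ stays fixed, the hypothesis of Lemma \ref{lem:2.1} is simply never satisfied. Shrinking the localizing interval does not help, because the norming pair of the approximating function $h$ ranges over all of $\Gamma$ and is not under your control.

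Second, the transported functions themselves misbehave: since $\|\alpha(t)-\alpha(s)\|\leq|t-s|$ after arc-length reparametrization, chordal slopes are \emph{larger} than arc-length slopes, so for a function built from an arbitrary nowhere-dense set $A_n$ of positive measure (as in Theorem \ref{thm:main_sna_c0}) the transported slope at a pair $(s,t)$ is roughly $\rho(s,t)/g(|t-s|)$, where $\rho$ is the density of $A_n$ in $[s,t]$ and $g$ is the curve's distortion modulus of Lemma \ref{lem:C1}(2). Both $\rho$ (at density points of $A_n$) and $g$ tend to $1$ at small scales, and it is the \emph{interval-by-interval comparison of these two rates}, not the global bi-Lipschitz constant, that decides whether the norm stays equal to $\|\lambda\|_\infty$, whether it is strongly attained, and whether the non-approximation estimate survives. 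An arbitrary fat Cantor set gives no such comparison; one needs a set $C_n$ satisfying $|C_n\cap I|/|I|<g(|I|)$ for \emph{every} nontrivial interval $I$, whose existence is precisely Chiclana's characterization, Lemma \ref{lem:C1}(1). This is what the paper's proof does: it does not reduce to Theorem \ref{thm:main_sna_c0} at all, but builds, for each $n$, a density-dominated Cantor set $C_n$ and the function $F_n$ with slope $1$ on $C_n$ and $-\tfrac18$ off it, transfers the non-approximability statement $(\dagger)$ from \cite[Theorem 1.2]{Chiclana22}, and then glues via cutoffs $\phi_n$ and McShane extensions, proving $\|G^{(\lambda)}\|=\|\lambda\|_\infty$ and $G^{(\lambda)}\notin\overline{\sna(\Gamma)}$ by direct slope case analysis, with no appeal to any length-space lemma. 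Without this adapted Cantor-set construction your plan cannot be repaired by taking the interval ``short enough.''
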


Before we present the proof, let us recall some lemmas from \cite{Chiclana22}.

\begin{lemma}\label{lem:C1}
Let $f : (0,1]\rightarrow [0,1]$ be a function, $E$ a normed space, $\rho >0$, and $\alpha : [0,\rho]\rightarrow E$ a $C^1$ curve parametrized by arc length. 
\begin{enumerate}
\itemsep0.25em
\item[\textup{(1)}] \textup{(\mbox{\cite[Theorem 1.1]{Chiclana22}})} The following are equivalent:
\begin{enumerate}
\itemsep0.25em
\item[\textup{(a)}] $\lim_{x\rightarrow 0^+} f(x)=1$ and $\inf_{x \in (0,1]} f(x) >0$. 
\item[\textup{(b)}] There exists a measurable set $C \subseteq [0,1]$ of positive measure satisfying 
\[
\frac{|C \cap I|}{|I|} < f(|I|) 
\]
for every nontrivial interval $I \subseteq [0,1]$. 
\end{enumerate}
\item[\textup{(2)}] \textup{(\mbox{\cite[Lemma 3.1]{Chiclana22}})} Consider the function $g: (0,\rho] \rightarrow \mathbb{R}$ given by 
\[
g(x) =  \left\{ \frac{\|\alpha(t)-\alpha(s)\|}{|t-s|} : t, s \in [0,\rho], \, |t-s|=x \right\}, \quad x \in (0,\rho]. 
\]
Then $\lim_{x\rightarrow 0^+} g(x) = 1$. \label{lem:C1_2}
\end{enumerate} 
\end{lemma}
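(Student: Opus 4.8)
The plan is to treat the two parts of Lemma~\ref{lem:C1} separately, since they are logically independent: part~(2) is an elementary consequence of the fundamental theorem of calculus, while part~(1) is a genuine measure-theoretic equivalence whose forward direction $(b)\Rightarrow(a)$ rests on Lebesgue differentiation and whose converse $(a)\Rightarrow(b)$ requires an explicit Cantor-type construction. Although both are quoted from \cite{Chiclana22}, I indicate below how each can be established from scratch.

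For part~(2), I would use that $\alpha$ is parametrized by arc length, so $\alpha'$ is continuous on the compact interval $[0,\rho]$ with $\|\alpha'(u)\|=1$ for every $u$; in particular $\alpha'$ is uniformly continuous, with some modulus of continuity $\omega$. Writing $\alpha(t)-\alpha(s)=\int_s^t \alpha'(u)\,du$ for $s<t$, the triangle inequality for the (Bochner) integral gives $\|\alpha(t)-\alpha(s)\|\le |t-s|$, so every quotient appearing in $g(x)$ is at most $1$. For the matching lower bound, for $|t-s|=x$ one estimates
\[
\|\alpha(t)-\alpha(s)\|\ge \|(t-s)\alpha'(s)\|-\Bigl\|\int_s^t\bigl(\alpha'(u)-\alpha'(s)\bigr)\,du\Bigr\|\ge |t-s|\bigl(1-\omega(x)\bigr),
\]
using $\|\alpha'(s)\|=1$. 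Hence every such quotient lies in $[1-\omega(x),1]$, so $1-\omega(x)\le g(x)\le 1$, and since $\omega(x)\to0$ as $x\to0^+$, we conclude $\lim_{x\to0^+}g(x)=1$.

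For the implication $(b)\Rightarrow(a)$ of part~(1), assume a measurable set $C$ of positive measure satisfying the stated inequality exists. The condition $\lim_{x\to0^+}f(x)=1$ follows from the Lebesgue density theorem: at a density point $p$ of $C$, the centred intervals $I_x=[p-\tfrac{x}{2},\,p+\tfrac{x}{2}]$ satisfy $|C\cap I_x|/x\to1$, and since $f(x)>|C\cap I_x|/x$ while $f\le1$, we obtain $f(x)\to1$. For $\inf_x f(x)>0$, fix any $x_0\in(0,1]$ and set $m=\lfloor 1/x_0\rfloor$; partitioning $[0,mx_0]$ into $m$ disjoint intervals of length $x_0$ and covering the remainder by $[1-x_0,1]$ (legitimate since $(m+1)x_0>1$), the hypothesis yields $|C|<mx_0 f(x_0)+x_0 f(x_0)\le 2f(x_0)$, so $f(x_0)>|C|/2>0$ uniformly in $x_0$.

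The converse $(a)\Rightarrow(b)$ is the main obstacle, and is where I would concentrate the effort. Given $f$ with $c:=\inf_x f(x)>0$ and $f(x)\to1$, I would build $C=\bigcap_n C_n$ as a generalized Cantor set: fix scales $\ell_0=1>\ell_1>\cdots\to0$, let each $C_n$ be a disjoint union of length-$\ell_n$ intervals, and pass from $C_n$ to $C_{n+1}$ by retaining in each component a fixed fraction $\rho_n\in(0,1)$ of its length (split into equally spaced sub-intervals). Then $|C\cap J|/|J|=P_n:=\prod_{k\ge n}\rho_k$ for each stage-$n$ component $J$, and $|C|=\prod_k\rho_k$. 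The crux is the calibration: setting $t_n:=-\log\inf\{f(x):\ell_{n+1}\le x<\ell_n\}$, one has $t_n\downarrow0$ (since $f\to1$) and $t_n\le-\log c$ (since $\inf f=c>0$), and choosing $-\log\rho_k:=(t_k-t_{k+1})+\delta_k$ with a summable $\delta_k>0$ forces $-\log P_n=\sum_{k\ge n}(-\log\rho_k)=t_n+\sum_{k\ge n}\delta_k>t_n$, i.e.\ $P_n<\inf\{f(x):\ell_{n+1}\le x<\ell_n\}$, while $\sum_k(-\log\rho_k)=t_1+\sum_k\delta_k<\infty$ guarantees $|C|>0$. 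It is precisely here that $\inf f>0$ enters, keeping the $\rho_k$ bounded below so that no stage removes everything. The remaining and most delicate step is the density estimate for \emph{arbitrary} (non-aligned) intervals: for $I$ with $|I|=x\in[\ell_{n+1},\ell_n)$ one checks that the densest admissible placement of $I$ sits inside a single stage-$n$ component, whence $|C\cap I|/|I|\le P_n<f(x)$ after absorbing the boundary overlap into the strict inequality, while intervals straddling gaps only lower the density. I expect verifying this estimate uniformly over all lengths and positions, together with realizing the prescribed fractions $\rho_n$ by genuine integer subdivisions, to be the technical heart of the argument.
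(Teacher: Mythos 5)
The paper itself offers no proof of this lemma: both parts are imported verbatim from \cite{Chiclana22} (Theorem 1.1 and Lemma 3.1 there), so your proposal has to be judged on correctness alone rather than against an in-paper argument. Your part (2) is correct: the two-sided bound $1-\omega(x)\le\|\alpha(t)-\alpha(s)\|/|t-s|\le 1$ via $\|\alpha'\|\equiv 1$ and uniform continuity of $\alpha'$ is the standard argument, and it settles the statement regardless of the $\inf$ that is evidently missing from the paper's display (if $E$ is not complete, either work in the completion or replace the integral step by the mean value inequality). Your $(b)\Rightarrow(a)$ is also correct: Lebesgue density at an interior density point of $C$ gives $f(x)\to 1$, and your partition count $|C|<(m+1)x_0f(x_0)\le 2f(x_0)$ gives $\inf f\ge |C|/2>0$.

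The genuine gap is in $(a)\Rightarrow(b)$, at exactly the step you flag but then mis-state. Your claimed extremal bound --- that an interval $I$ with $|I|=x\in[\ell_{n+1},\ell_n)$ lying inside a single stage-$n$ component satisfies $|C\cap I|/|I|\le P_n$ --- is false: taking $I$ equal to one stage-$(n+1)$ subcomponent gives $|I|=\ell_{n+1}$ and $|C\cap I|/|I|=P_{n+1}>P_n$, since $P_{n+1}=P_n/\rho_{n+1}$. The correct requirement for lengths in the block $[\ell_{n+1},\ell_n)$ is therefore $P_{n+1}<\inf\{f(x):\ell_{n+1}\le x<\ell_n\}=e^{-t_n}$, one index off from your calibration $-\log P_n>t_n$; with your summable-$\delta_k$ choice the needed inequality $-\log P_{n+1}=\tilde t_{n+1}+\sum_{k\ge n+1}\delta_k>t_n$ can genuinely fail when $t_n>\tilde t_{n+1}$. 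A second, smaller defect: $f(x)\to 1$ does not make $t_n$ monotone, yet your choice $-\log\rho_k:=(t_k-t_{k+1})+\delta_k$ requires $t_k-t_{k+1}+\delta_k>0$; you must first pass to the running suprema $\tilde t_n:=\sup_{m\ge n}t_m$ (nonincreasing, tending to $0$, bounded by $-\log c$). Both defects are repairable --- set $-\log\rho_{k+1}:=(\tilde t_k-\tilde t_{k+1})+\delta_k$, which forces $-\log P_{n+1}>\tilde t_n\ge t_n$ while keeping $\sum_k(-\log\rho_k)\le\tilde t_1+\sum_k\delta_k<\infty$ and hence $|C|>0$ --- and then the interval analysis closes in the way you anticipate: for $x$ near $\ell_{n+1}$ the worst placement covers one full subcomponent (density at most $P_{n+1}$), while for larger $x$ the gap fraction dilutes the density toward $P_n$. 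But as written, your key estimate fails precisely on aligned intervals, which is the configuration the Cantor-type construction of \cite{Chiclana22} is designed to control, so the proposal as it stands does not yet prove $(a)\Rightarrow(b)$.
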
 

\begin{proof}[Proof of Theorem \ref{thm:C1}]
Since $\alpha'$ is continuous and non-identically zero, we can take a non-trivial interval $J_0 \subseteq [0,1]$ so that $\alpha'(t) \neq 0$ for every $t \in J_0$. Reparametrize $\alpha$ in $J_0$ with respect to arc length. Thanks to \eqref{lem:C1_2} in Lemma \ref{lem:C1}, $J_0$ can be chosen small enough so that $|t-s|/2 \leq \| \alpha(t)-\alpha(s) \| \leq |t-s|$. Up to a change of variables, write $J_0 = [0,\rho]$ for some $0<\rho<1$. Note that $\Gamma_0 : = \{ \alpha(t) : t \in J_0 \}$ is arc-connected; so we can consider $\{q_n\}_{n=1}^\infty \subseteq \Gamma_0$ and $(r_n)_n \subseteq (0,\infty)$ so that $d(q_n,q_m) > 2 (r_n + r_m)$ for every $n\neq m$, and we choose $r_n$'s small enough so that $B(q_n, r_n) \cap \Gamma$ has only one connected component. Taking a subsequence if necessary, we assume that the distinguished point $0$ is not in $\cup_{m=1}^\infty B(q_m, r_m)$. Take $u_n <v_n$ in $J_0$ so that $\alpha([u_n,v_n]) \subseteq B(q_n, r_n/40)$. 

For each $n \in \mathbb{N}$, consider $g_n : (0, v_n-u_n] \rightarrow \mathbb{R}$ by 
\[
g_n (x) =  \inf \left\{ \frac{\|\alpha(t)-\alpha(s)\|}{|t-s|} : t, s \in [u_n,v_n], \, |t-s|=x \right\}, \quad x \in (0,v_n-u_n]. 
\]
By Lemma \ref{lem:C1}, there exists a measurable (Cantor) set $C_n \subseteq [u_n,v_n]$ satisfying that $|C_n|>0$ and $|C_n \cap I|/|I| < g_n (|I|)$
for every nontrivial interval $I \subseteq [u_n,v_n]$. Write $\Gamma_n := \{\alpha(t) : t \in [u_n, v_n]\}$ and define $F_n \in \lip ( \Gamma_n \cup (\Gamma \setminus B(q_n, r_n/5) ))$ by 
\[
F_n (\alpha(t)) = \int_{u_n}^t \chi_{C_n} (s) - \frac{1}{8} \chi_{[u_n,v_n]\setminus C_n} (s)  \, ds 
\]
for every $t \in [u_n, v_n]$, and $F_n (p) =0$ for $p \not\in B(q_n, r_n/5)$. Let $t_n \in C_n$ be a Lebesgue density point of $C_n$ and $s_n >0$ small enough so that $p_n:=\alpha (t_n)$ satisfies that $B(p_n, s_n) \subseteq \Gamma_n$ (see Figure \ref{fig:d}).

Now, define $H_n \in \lip( \Gamma_n \cup (\Gamma \setminus B(q_n, r_n/5) ) )$ 
by $H_n(\alpha(t)) = F_n (\alpha(t))-F_n(p_n)$ for each $t\in[u_n,v_n]$ and $H_n (p)= 0$ for $p \not\in B(q_n, r_n/5)$.

\textbf{Claim A}: $\|H_n\| =1$ for every $n \in \mathbb{N}$. 

Note from the proof of \cite[Theorem 1.2]{Chiclana22} that $\| F_n \vert_{\Gamma_n} \| =1$. Therefore, $\|H_n\| \geq \|H_n \vert_{\Gamma_n}\| = \|F_n \vert_{\Gamma_n}\| =1$. To prove the claim, it is then enough to consider $|S(F_n, p,q)|$ for $p \in \Gamma_n$ and $q \in \Gamma\setminus B(q_n, r_n/5)$. In this case, letting $p=\alpha(t)$ for some $t \in [u_n,v_n]$, 
\begin{equation}\label{eq:4/7}
|S(F_n, p,q)| = \frac{|H_n(\alpha(t))|}{\| \alpha(t) - q \| } = \frac{|F_n (p) - F_n (p_n)|}{\|p-q\|} \leq  \frac{ \frac{r_n}{20} }{\frac{r_n}{5}-\frac{r_n}{40}} =\frac{2}{7},
\end{equation}
where the last inequality holds since $p, p_n \in B(q_n, r_n/40)$ and $q \not\in B(q_n, r_n/5)$. So, we conclude that $\|H_n\|=1$. 

\begin{figure}[H]
\centering
\includegraphics[width=11cm]{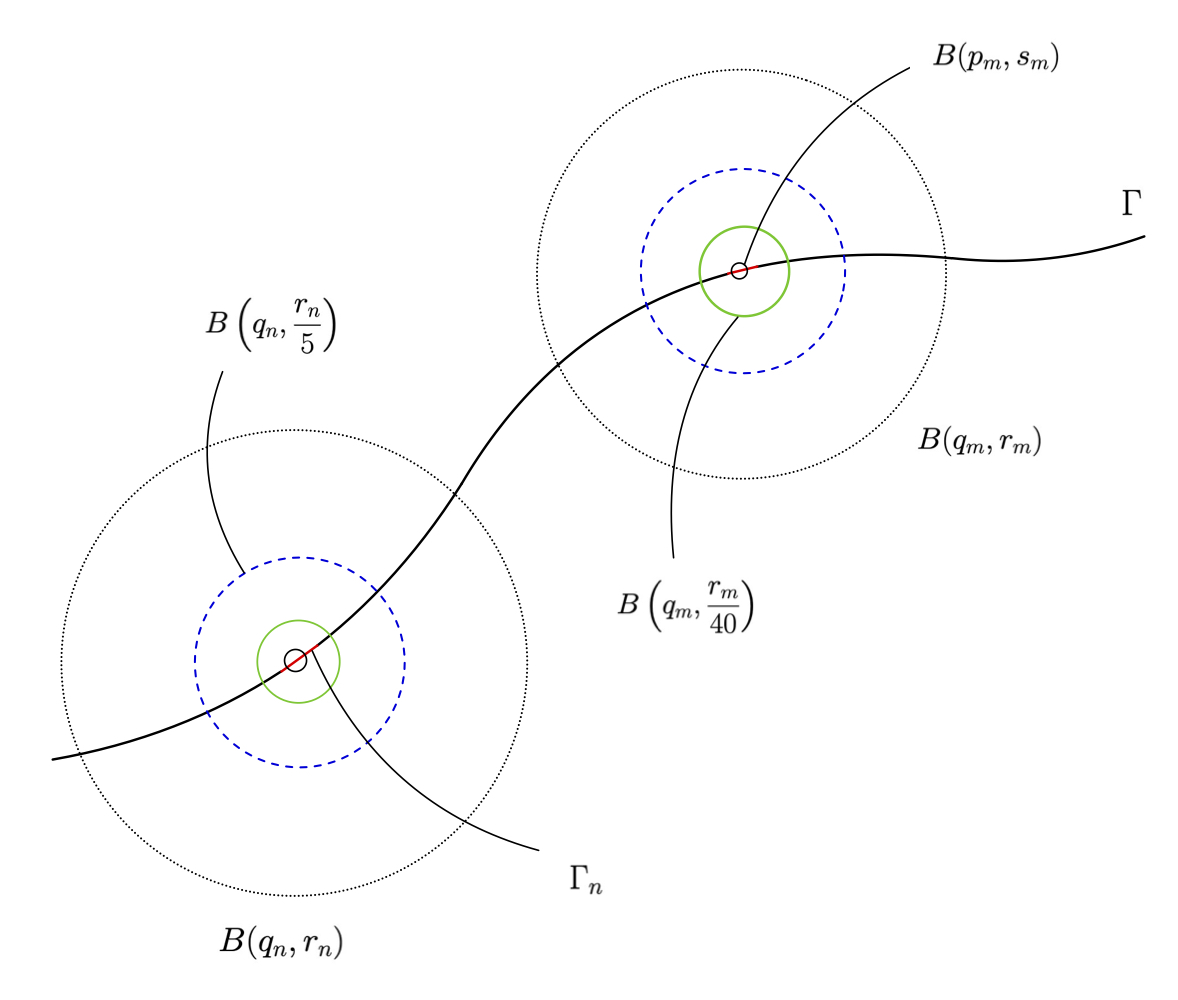}
\caption{The curves and balls}
\label{fig:d}
\end{figure}

\textbf{Claim B}: $H_n \not\in \overline{\sna (\Gamma_n \cup (\Gamma \setminus B(q_n, r_n/5)) )}$ for each $n \in \mathbb{N}$.

Let us first observe the following:
\begin{equation}\label{dagger}
\text{If $G\in \lip (\Gamma_n)$ satisfies that $\|G- F_n \vert_{\Gamma_n}\| < \frac{1}{17}$, then $G \not\in \overline{\sna(\Gamma_n)}$}. \tag{$\dagger$}
\end{equation} 
As a matter of fact, given such a Lipschitz function $G$, consider $\widetilde{G} := \frac{1}{\|\Psi(G)'\|_\infty } G$, where $\Psi: \lip (\Gamma_n) \rightarrow \lip ([u_n,v_n])$ is the isomorphism given by $\Psi(f)(t)=f(\alpha(t))$ for $f \in \lip (\Gamma_n)$ and $t \in [u_n,v_n]$. Note that $\Psi (\widetilde{G})' \in L_\infty ([u_n,v_n])$ has norm one and 
\[
\| \widetilde{G}- F_n \vert_{\Gamma_n}\| < \frac{1}{17}+\|G\| \left| 1 -\frac{1}{\|\Psi(G)'\|_\infty} \right| < \frac{1}{17} + \left(1+\frac{1}{17}\right) \left( \frac{ \frac{1}{17}}{\frac{16}{17}} \right) = \frac{1}{8}.
\]
Thus, $G \not\in \overline{\sna(\Gamma_n)}$ due to the argument in the proof of \cite[Theorem 1.2]{Chiclana22}, where it is proved that if $L \in \lip(\Gamma_n)$ satisfies that $\|\Psi(L)' \|_\infty = 1$ and $\|L-F_n \vert_{\Gamma_n}\| < \frac{1}{8}$, then $L \not\in \overline{\sna (\Gamma_n)}$. 

Back to the Claim B, assume to the contrary that $H_n \in \overline{\sna (\Gamma_n \cup (\Gamma \setminus B(q_n, r_n/5)) )}$. Then there is $L \in \sna (\Gamma_n \cup (\Gamma \setminus B(q_n, r_n/5)) )$ such that $\| L-H_n \| < 1/24$. Suppose that $L$ strongly attains its norm at $m_{\alpha(t), \alpha(s)}$ where $t<s$ are in $I_n \cup [u_n,v_n] \cup J_n$, where $I_n, J_n \subseteq J_0$ are disjoint intervals so that $\alpha(I_n \cup J_n) = \Gamma \setminus B(q_n, r_n/5)$. If $t, s \in [u_n,v_n]$, this would mean that $L\vert_{\Gamma_n}$ strongly attains its norm and $\|L\vert_{\Gamma_n}-F_n \vert_{\Gamma_n}\|  \leq \|L-H_n\| < \frac{1}{24}$. This contradicts to the observation \eqref{dagger}.
Thus, $t$ or $s$ must belong to $I_n \cup J_n$. If $t, s \in I_n \cup J_n$, then 
\[
\frac{1}{24} > \| L - H_n \| \geq \|L\|  > \|H_n\| - \frac{1}{24} = \frac{23}{24}.
\]
which is a contradiction. If $t \in I_n \cup J_n$ and $s \in [u_n,v_n]$, then by \eqref{eq:4/7}, we have 
\[
\frac{1}{24} > \|L-H_n\| \geq \|L\| - |S(H_n, \alpha(t),\alpha(s))| > \left(\|H_n\|-\frac{1}{24}\right) - \frac{2}{7}
\]
which is again a contradiction. The case $s \in I_n \cup J_n$ and $t \in [u_n,v_n]$ can be checked similarly; so we derive a contradiction in any case. Summarizing, if $\|L-H_n\| < \frac{1}{24}$, then $L \not\in \overline{\sna (\Gamma_n \cup (\Gamma \setminus B(q_n, r_n/5)) )}$. 

Next, as in the proof of \cite[Lemma 3.2]{Chiclana22}, we will pick $0<\eps_n<\frac{12\delta_n}{5s_n}$, where $\delta_n$ is a number satisfying that for every $g_0 \in \lip(\Gamma_n \cup (\Gamma \setminus B(q_n, r_n/5)) )$ with $\|g_0 - H_n \| < \delta_n$, we have $g_0 \not\in \overline{\sna (\Gamma_n \cup (\Gamma \setminus B(q_n, r_n/5)) )}$. As we just observed above, $\delta_n$ can be chosen $\frac{1}{24}$ for every $n \in \mathbb{N}$. For our purpose, set 
\[
\eps_n = \frac{12}{10 s_n} \frac{1}{24} = \frac{1}{20 s_n} \, \text{ for all } n \in \mathbb{N}.
\]
Define $\phi_n : \Gamma \rightarrow \mathbb{R}$ by 
$$\phi_n (p) = 
\begin{cases}
1,\quad &\text{if } p \in B\left(p_n, \frac{s_n}{4}\right), \\ 
1-\eps_n \left(d(p,p_n)-  \frac{s_n}{4}\right), \quad &\text{if } p \in B\left( p_n, \frac{s_n}{3}\right) \setminus B\left( p_n , \frac{s_n}{4} \right), \\ 
1-\frac{\eps_n s_n}{12}, \quad &\text{if } p \not\in B\left(p_n, \frac{s_n}{3}\right).
\end{cases}$$
Let us consider a McShane extension $\widetilde{H}_n :  \Gamma\rightarrow [-\frac{r_n}{20}, \frac{r_n}{20}]$ of $H_n$ (see, e.g., \cite[Corollary 2.5]{Matouskova2000}). 
Notice from the calculations in the proof of \cite[Lemma 3.2]{Chiclana22} that $\|\widetilde{H}_n - \widetilde{H}_n \phi_n \| < \delta_n = \frac{1}{24}$. 
Since $\widetilde{H}_n = \widetilde{H}_n \phi_n$ on $B(p_n, \frac{s_n}{4})$ (and as $p_n$ is a norming point for $H_n$), we have $\|\widetilde{H}_n \phi_n\| \geq 1$. For simplicity, put $G_n : = \frac{1}{\|\widetilde{H}_n \phi_n\|} \widetilde{H}_n \phi_n$ for every $n \in \mathbb{N}$. 

Let $\lambda =(\lambda_n)_n \in \ell_\infty$ and $G^{(\lambda)} := \sum_{n=1}^\infty \lambda_n G_n$, where the sum is given by the pointwise limit (noting that the support of $G_n$ is contained in $B(q_n, r_n/5)$ and the balls $B(q_n, r_n/5)$ are disjoint).

\textbf{Claim C}: $\|G^{(\lambda)}\| = \|\lambda\|$.

As the supports of $G_n$'s are disjoint, it is clear that $\|G^{(\lambda)}\|\geq \|\lambda\|$. Let $p \in B(q_n, \frac{r_n}{5})$ and $q \in B(q_m, \frac{r_m}{5})$ for $n \neq m$. Then 
\begin{align}\label{eq:12,20}
&|S(G^{(\lambda)}, p,q)| \nonumber \\
&= \left| \frac{ \frac{\lambda_n}{\|\widetilde{H}_n \phi_n \|} \widetilde{H}_n \phi_n (p) - \frac{\lambda_m}{\|\widetilde{H}_m \phi_m \|} \widetilde{H}_m \phi_m (q) } {\|p-q\|} \right| \nonumber \\ 
&\leq  \frac{|\lambda_n|}{\|\widetilde{H}_n \phi_n \|} \|\widetilde{H}_n \phi_n - \widetilde{H}_n \| + \left| \frac{ \frac{\lambda_n}{\|\widetilde{H}_n \phi_n \|} \widetilde{H}_n (p) - \frac{\lambda_m}{\|\widetilde{H}_m \phi_m \|} \widetilde{H}_m (q) }{\|p-q\|} \right| +  \frac{| \lambda_m| }{\|\widetilde{H}_m \phi_m \|} \|\widetilde{H}_m \phi_m - \widetilde{H}_m \| \nonumber \\ 
&\leq \frac{1}{24} (|\lambda_n| + |\lambda_m|) + \frac{|\lambda_n| \frac{r_n}{20} + |\lambda_m| \frac{r_n}{20}}{\|p-q\|} \nonumber \\ 
&< \|\lambda\| \left( \frac{1}{12} + \frac{ \frac{r_n}{20} + \frac{r_m}{20}}{r_n + r_m} \right) = \|\lambda\| \left( \frac{1}{12} + \frac{1}{20} \right). 
\end{align} 
If $p \in B(p_n, \frac{r_n}{5})$ and $q \not\in \cup_{m=1}^\infty B(p_m, r_m/5)$, then $|S(G^{(\lambda)},p,q)| = |\lambda_n S(G_n, p,q)| \leq \|\lambda\|$. This proves that $\|G^{(\lambda)}\| = \|\lambda\|$.

\textbf{Claim D}: $G^{(\lambda)} \not\in \overline{\sna(\Gamma)}$. 

Assume to the contrary that there exists $L \in \sna(\Gamma)$ such that $\|L-G^{(\lambda)}\| < \beta$, where 
\[
0< \frac{ \beta} {\|\lambda\|-\beta} < \frac{24}{25} \left( \frac{1}{17} - \frac{1}{24} \right) \,\,  \text {and } \,\,  0 < \beta < \frac{\|\lambda\|}{480}. 
\]
Suppose that $|L(m_{x,y})| =\|L\| = \|G^{(\lambda)}\|$. If $x,y \not\in \cup_{m=1}^\infty B(q_m, \frac{r_m}{5})$, then $\|L-G^{(\lambda)}\| \geq |L(m_{x,y})| = \|\lambda\|$, which is a contradiction. Thus, by relabelling $x$ and $y$ if necessary, we assume that $x \in B(q_n, \frac{r_n}{5})$ for some $n \in \mathbb{N}$. 

\underline{Case 1}: $y \in B(q_m, \frac{r_m}{5})$ for some $m \in \mathbb{N}$ with $m \neq n$. Then as in \eqref{eq:12,20} we observe that 
\[
|S(G^{(\lambda)}, x, y)| \leq \|\lambda\| \left( \frac{1}{12} + \frac{1}{20}\right),
\]
which contradicts that $|S(G^{(\lambda)}, x, y)| > \|\lambda\|-\beta$. 

\underline{Case 2}: $y \not\in \cup_{m=1}^\infty B(q_m, \frac{r_m}{5})$. We consider two subcases: 

\underline{Subcase 2-1}: If $x \in B(p_n, \frac{s_n}{3})$, then noting that $B(p_n, \frac{s_n}{3}) \subseteq B(q_n, \frac{r_n}{5})$, we have 
\[
|S(G^{(\lambda)}, x, y)| = |\lambda_n| \frac{|G_n(x)|}{\|x-y\|} < \|\lambda\| \frac{ \frac{r_n}{20} }{\frac{r_n}{5}-\frac{r_n}{40}} = \frac{2}{7} \|\lambda\|, 
\]
which is a contradiction.

\underline{Subcase 2-2}: If $x \not\in B(p_n, \frac{s_n}{3})$. As it is clear that $y \not\in B(p_n, \frac{s_n}{3})$, we observe from the definition of $\phi_n$ that 
\begin{align*}\label{eq:240}
\beta > \|G^{(\lambda)}-L\| &\geq \|\lambda\| - |\lambda_n| \frac{|G_n(x)|}{\|x-y\|} \nonumber \\
&\geq \| \lambda \| - \|\lambda\| \left( 1- \frac{\eps_n s_n}{12} \right) \| \widetilde{H}_n \| \frac{1}{\|\widetilde{H}_n \phi_n \|} \nonumber \geq \|\lambda\| \left( 1 - \left( 1 - \frac{1}{240}\right) \right) = \frac{\|\lambda\|}{240},
\end{align*} 
which is a contradiction again.

Finally, the only remain case is: 

\underline{Case 3}: $y \in B(q_n, \frac{r_n}{5})$. 

\underline{Subcase 3-1}: If $x, y \in B(p_n, s_n)$, then this would imply that $L \vert_{\Gamma_n}$ strongly attains its norm. Note that 
\[
\beta > \| L \vert_{\Gamma_n} - G^{(\lambda)} \vert_{\Gamma_n} \| = \left \| L \vert_{\Gamma_n} - \frac{\lambda_n}{\| \widetilde{H}_n\phi_n\| } \widetilde{H}_n \phi_n \vert_{|\Gamma_n} \right \|. 
\]
This implies that $|\lambda_n| > \|\lambda\| - \beta$ and, putting $G_0 := \frac{\| \widetilde{H}_n\phi_n\|}{\lambda_n} L \vert_{\Gamma_n}$ ($\in \sna(\Gamma_n)$), we have that 
\begin{align*}
\|H_n \vert_{\Gamma_n} - G_0 \| &\leq \|\widetilde{H}_n - \widetilde{H}_n \phi_n \| + \|\widetilde{H}_n \phi_n - G_0 \| \\
&< \frac{1}{24} +  \frac{\| \widetilde{H}_n\phi_n\|}{\lambda_n} \beta < \frac{1}{24} + \frac{1+\frac{1}{24}}{\|\lambda\|-\beta} \beta < \frac{1}{17},
\end{align*}
which contradicts to the above observation \eqref{dagger}. 

\underline{Subcase 3-2}: If $x \in B(p_n, \frac{s_n}{3})$, then by Subcase 3-1, $y$ cannot be in $B(p_n, s_n)$. Then 
\begin{align*}
&|S(G^{(\lambda)}, x,y)| = \frac{| \lambda_n| }{\| \widetilde{H}_n \phi_n\|} \left| \frac{ \widetilde{H}_n (x)\phi_n (x) - \widetilde{H}_n (y) \phi_n (y) }{\|x-y\|} \right| \leq \|\lambda\| \left| (I) \right|,
\end{align*} 
where 
\[
(I) =  \frac{ \widetilde{H}_n (x)\phi_n (x) - \left(1-\frac{\eps_n s_n}{12}\right)\widetilde{H}_n(y) }{\|x-y\|}. 
\] 
If $(I) \geq 0$, then 
\begin{align*}
|(I)| &\leq \frac{ \widetilde{H}_n (x)\phi_n (x) - \left(1-\frac{\eps_n s_n}{12}\right) (\widetilde{H}_n (x)- \|x-y\|) }{\|x-y\|} \\
&= \left( 1-\frac{\eps_n s_n}{12} \right) + \frac{H_n (x) \left[\phi_n (x) - \left( 1- \frac{\eps_n s_n}{12} \right)\right]}{\|x-y\|} < \left( 1-\frac{\eps_n s_n}{12} \right)  + \frac{ \frac{s_n}{3} \left( \frac{\eps_n s_n}{12} \right) }{\frac{2}{3} s_n} = 1 - \frac{1}{480},
\end{align*} 
which implies that $|S(G^{(\lambda)}, x,y)| \leq \|\lambda\| (1- \frac{1}{480})$, a contradiction. 

If $(I) \leq 0$, then 
\begin{align*}
|(I)| &= \frac{ \left(1-\frac{\eps_n s_n}{12}\right) \widetilde{H}_n (y) - \widetilde{H}_n (x)\phi_n (x) }{\|x-y\|} \\ 
&\leq \frac{ \left(1-\frac{\eps_n s_n}{12}\right) (H_n (x) + \|x-y\|) - \widetilde{H}_n (x)\phi_n (x) }{\|x-y\|} \\ 
&= \left( 1-\frac{\eps_n s_n}{12} \right) + H_n (x) \left( \frac{ \left(1 - \frac{\eps_n s_n}{12} \right) - \phi_n (x) }{\|x-y\| } \right) \\
&\leq \left(1-\frac{\varepsilon_n s_n}{12}\right) + \frac{|H_n(x)|}{\|x-y\|} |\phi(x)-\phi(y)| \leq \left(1-\frac{\varepsilon_n s_n}{12}\right) + \frac{\frac{s_n}{3}}{\frac{2 s_n}{3}} \frac{\varepsilon_n s_n}{12} = 1 - \frac{\varepsilon_n s_n}{24};
\end{align*} 
so it is a contradiction again.

\underline{Subcase 3-3}: Suppose that $x \not\in B(p_n, \frac{s_n}{3})$. If $y \in B(p_n, \frac{s_n}{3})$, then $x$ cannot be in $B(p_n, s_n)$ by Subcase 3-1. Then we argue as in Subcase 3-2 to derive that 
\[
|S(G^{(\lambda)}, y, x) | < \|\lambda\| \left( 1- \frac{1}{480} \right),
\]
which is a contradiction. Thus, $y \not\in B(p_n, \frac{s_n}{3})$. In this case, we compute as in Subcase 2-2 that 
\[
\beta > \|G^{(\lambda)} - L \| \geq \frac{\| \lambda \|}{240},
\]
so a contradiction.

Summarizing, if $L \in \lip(\Gamma)$ satisfies that $\|L-G^{(\lambda)}\| < \beta$, then $L \not\in \overline{\sna(\Gamma)}$. 
\end{proof}

We finish this section by presenting the spaceability results concerning the set of Lipschitz functions which attain their pointwise norm but are not strongly norm-attaining. Recall that $f \in \Lip(M,Y)$ is said to \textit{attain its pointwise norm} if there exists $p \in M$ such that
$$\sup_{q \in M \setminus \{p\}} \| S(f,p,q)\| = \|f\|.$$
In this case, we write $f \in \PNA(M,Y)$ (for simplicity, $f \in \pna(M)$ when $Y=\mathbb{R}$). The concept of pointwise norm attainment of Lipschitz functions is examined in \cite{CJLR2023} and it has been noted that given an infinite metric space $M$, there is always a subset $M_0\subset M$ such that the set of Lipschitz functions on $M_0$ attaining their pointwise norm contains an isometric copy of $c_0$ (\cite[Theorem 4.11]{CJLR2023}). Given a metric space $M$, let $M'$ denote the set of its accumulation points.

\begin{theorem}\label{prop:nosna-c0-Gamma}
Let $M$ be an infinite pointed metric space such that $M'$ has infinite density character $\Gamma$. Then, we have the following:
\begin{enumerate}
\itemsep0.25em
\item[\textup{(a)}] $(\pna(M) \setminus \sna(M)) \cup \{0\}$ isometrically contains $c_0(\Gamma)$;
\item[\textup{(b)}] If, in addition, $\Gamma \geq 2^{\aleph_0}$, then $(\pna(M) \setminus \sna(M))\cup \{0\}$ isometrically contains $\ell_1$.
\end{enumerate} 
\end{theorem}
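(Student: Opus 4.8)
The plan is to construct one fully explicit \emph{peak} lying in $\pna(M)\setminus\sna(M)$ and then to superpose disjointly supported copies of it in two different ways, producing $c_0(\Gamma)$ and $\ell_1$.

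\textbf{Building block.} Fix $\rho>0$ and set $\varphi_\rho(t)=(\rho/\pi)\sin(\pi t/\rho)$ for $t\in[0,\rho]$ and $\varphi_\rho(t)=0$ for $t>\rho$. Elementary calculus yields three facts: (i) $0\le\varphi_\rho(t)\le\rho-t$ on $[0,\rho]$ (cone domination); (ii) $\varphi_\rho(t)/t\to1$ as $t\to0^+$; and (iii) $|\varphi_\rho(a)-\varphi_\rho(b)|<|a-b|$ for all distinct $a,b\ge0$, since $\varphi_\rho'(s)=\cos(\pi s/\rho)$ has modulus strictly below $1$ off a null set. Given $p\in M'$, put $g_p:=\varphi_\rho\circ d(p,\cdot)$. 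As $\varphi_\rho$ and $d(p,\cdot)$ are $1$-Lipschitz, $\|g_p\|\le1$; using (iii) with $|d(p,x)-d(p,y)|\le d(x,y)$ one checks $|S(g_p,x,y)|<1$ for \emph{every} pair, while (ii) and any sequence $x_k\to p$ in $M$ force $S(g_p,p,x_k)\to1$. Hence $\|g_p\|=1$, $g_p$ attains its pointwise norm at $p$ but attains it strongly nowhere, with $\supp(g_p)\subseteq B(p,\rho)$ and $|g_p(x)|\le\max\{0,\rho-d(p,x)\}$.

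\textbf{Part (a).} First I would extract from $M'$ a family $\{p_\gamma\}_{\gamma\in\Gamma}$ of accumulation points and radii $\rho_\gamma>0$ with $d(p_\gamma,p_{\gamma'})>\rho_\gamma+\rho_{\gamma'}$ (gap-disjoint balls); see the obstacle below for how this is obtained. Then set $g^{(\lambda)}:=\sum_{\gamma}\lambda_\gamma\,\varphi_{\rho_\gamma}\circ d(p_\gamma,\cdot)$ for $\lambda\in c_0(\Gamma)$, a sum that is pointwise-finite (at most one nonzero term on each ball). The cone-domination estimate, exactly as in Lemma \ref{lemma:new-extension-lemma-[0,1]}, shows that on two distinct balls, on one ball, or on one ball versus the exterior, every slope of $g^{(\lambda)}$ is \emph{strictly} below $\|\lambda\|_\infty=\max_\gamma|\lambda_\gamma|$, while at an index $\gamma_0$ with $|\lambda_{\gamma_0}|=\|\lambda\|_\infty$ the slopes $S(g^{(\lambda)},p_{\gamma_0},x_k)$ tend to $\lambda_{\gamma_0}$. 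Thus $\|g^{(\lambda)}\|=\|\lambda\|_\infty$ (the supremum unattained), so $g^{(\lambda)}\in\pna(M)\setminus\sna(M)$, and $\lambda\mapsto g^{(\lambda)}$ is a linear isometry of $c_0(\Gamma)$.

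\textbf{Part (b).} The gain here is $\operatorname{cf}(2^{\aleph_0})>\omega$: since $\dens(M')=\sup_n\operatorname{sep}_{1/n}(M')=\Gamma\ge2^{\aleph_0}$, were every $\operatorname{sep}_{1/n}(M')<2^{\aleph_0}$ the supremum would have countable cofinality, contradicting König; hence some scale $1/n_0$ carries a \emph{uniformly} separated family of accumulation points of size $2^{\aleph_0}$. Index it as $\{p_\sigma:\sigma\in\{-1,1\}^{\bbn}\}$ with common radius $\rho$ and bumps $b_\sigma=\varphi_\rho\circ d(p_\sigma,\cdot)$. For each $n$ put $g_n:=\sum_\sigma\sigma(n)\,b_\sigma$; disjointness of supports and the cone estimate give $\|g_n\|=1$. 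For $\lambda\in\ell_1$, evaluating on the ball of $p_\sigma$ shows $g^{(\lambda)}:=\sum_n\lambda_n g_n=\sum_\sigma c_\sigma b_\sigma$ with $c_\sigma=\sum_n\lambda_n\sigma(n)$, so the same estimate yields $\|g^{(\lambda)}\|=\sup_\sigma|c_\sigma|=\sum_n|\lambda_n|=\|\lambda\|_1$, the supremum being realized at $\sigma=\sign(\lambda)\in\{-1,1\}^{\bbn}$. Pointwise attainment at $p_{\sign\lambda}$ and failure of strong attainment follow as in Part (a), so $\ell_1$ embeds isometrically.

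\textbf{Main obstacle.} The analytic content above is routine once the separated families are in hand; the real difficulty is the \emph{selection}. Part (b) is clean precisely because $2^{\aleph_0}$ has uncountable cofinality, and the same one-scale argument settles Part (a) whenever $\operatorname{cf}(\Gamma)>\omega$. The delicate case is Part (a) with $\operatorname{cf}(\Gamma)=\omega$ (e.g.\ $\Gamma=\aleph_\omega$): no single scale reaches $\Gamma$, and since $M'$ need not be doubling, a coarse ball may contain arbitrarily many fine separated points, so scales cannot be merged naively. I would resolve this by extracting a discrete subspace $E\subseteq M'$ with $|E|=\dens(M')=\Gamma$ (the spread of a metric space equals its density character) and assigning to each $e\in E$ the radius $\tfrac13\inf_{e'\in E\setminus\{e\}}d(e,e')>0$, which forces $\rho_e+\rho_{e'}\le\tfrac23 d(e,e')<d(e,e')$; carefully justifying this metric-space fact, or reproving the merging directly by transfinite recursion on $\Gamma$, is the step demanding the most care.
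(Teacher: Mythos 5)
Your proposal is correct and follows essentially the same route as the paper: disjointly supported sine-type bumps centered at a discrete family $\{p_\gamma\}_{\gamma\in\Gamma}$ of accumulation points, with cone domination and strictly-sub-norm slopes giving the $c_0(\Gamma)$ embedding into $\pna(M)\setminus\sna(M)$ for (a), and sign-indexed sums $\sum_\sigma \sigma(n)b_\sigma$ over a continuum-sized subfamily giving the $\ell_1$ embedding for (b), exactly as in the paper (your bump vanishing at the center rather than peaking there is immaterial). The selection fact you single out as the main obstacle --- a discrete subspace of $M'$ of cardinality $\dens(M')$, including the delicate case $\operatorname{cf}(\Gamma)=\omega$ --- is precisely what the paper imports from \cite[Proposition 5.1]{DMQR23}, so it is a known citable result rather than a gap; your K\"onig-cofinality argument in (b), producing a single-scale separated family of size $2^{\aleph_0}$, is a nice touch that makes part (b) independent of that selection lemma, whereas the paper simply reuses the part-(a) family.
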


\begin{proof}
Take, by using \cite[Proposition 5.1]{DMQR23}, a discrete metric subspace $\{p_\gamma\}_{\gamma\in\Gamma}\subset M' \setminus \{0\}$ such that $p_\alpha\neq p_\beta$ whenever $(\alpha, \beta)\in\widetilde{\Gamma}$.
For each $\gamma \in \Gamma$, choose $q_\gamma \in M$ so that 
\[
d(p_\gamma,q_\gamma) < \frac{1}{2} \inf\{ d(p_{\gamma},p_{\alpha}): \alpha \in \Gamma \setminus \{ \gamma\} \}. 
\]
The choice of $\{q_\gamma\}_{\gamma \in \Gamma}$ yields that $d(p_\alpha,p_\beta)> d(p_\alpha, q_\alpha) + d(p_\beta, q_\beta)$ for every $(\alpha,\beta)\in\widetilde{\Gamma}$. 
For each $\gamma \in \Gamma$, define the function $\phi_\gamma\in\lip(M)$ as
$$
\phi_{\gamma} (p) := \begin{cases}
\frac{2d(p_{\gamma},q_{\gamma})}{\pi}\left(1-\sin\left( \frac{\pi d(p_{\gamma}, p)}{2d(p_{\gamma}, q_{\gamma})} \right)\right),\ &\text{if $d(p_{\gamma}, p)\leq d(p_{\gamma}, q_{\gamma})$},\\
0,\ &\text{if $d(p_{\gamma}, p)\geq d(p_{\gamma}, q_{\gamma}).$}
\end{cases}
$$

(a): 
Given $a=\{a_\gamma:\, \gamma\in\Gamma\}\in c_0(\Gamma)\setminus\{0\}$, let $f^{(a)}:=\sum_{\gamma\in\Gamma} a_\gamma \phi_\gamma$.
We claim that $f^{(a)} \in \pna(M) \setminus \sna(M)$ with $\|f^{(a)}\|=\|a\|_\infty$. 
Since $\supp(\phi_\alpha)\cap \supp(\phi_\beta)=\emptyset$ for all $(\alpha, \beta)\in\widetilde{\Gamma}$, we clearly have 
\[
\|f^{(a)}\|\geq \max\{|a_\gamma| \|\phi_\gamma\|:\, \gamma\in \Gamma\}=\|a\|_\infty.
\]
Next, we show that $\|f^{(a)}\|\leq \|a\|_\infty$ and $f^{(a)} \in \pna(M)\setminus \sna(M)$. Let $(p,q)\in\widetilde{M}$ be given. 
\begin{itemize}
\itemsep0.25em
\item If $p, q \not\in \cup_{\gamma \in \Gamma} B(p_\gamma, d(p_\gamma, q_\gamma))$, then $S(f^{(a)}, p,q)=0$. 
\item Suppose that $p \in B(p_\alpha, d(p_\alpha,q_\alpha))$ for some $\alpha \in \Gamma$ while $q \not\in \cup_{\gamma \in \Gamma} B(p_\gamma, d(p_\gamma, q_\gamma))$. Since the function $\phi_\gamma$ is convex throughout every ray of $B(p_\gamma,\, d(p_\gamma, q_\gamma))$, we have that for each $\gamma \in \Gamma$, 
\begin{equation}\label{eq:convexity_phi}
\phi_\gamma(x) \leq \max\left\{0, \frac{2}{\pi}\left( d(p_{\gamma}, q_{\gamma})-d(p_{\gamma}, x) \right)\right\} =: T_\gamma (x) \, \text{ for every } x \in M.
\end{equation}
Thus, 
\begin{align*}
|S( f^{(a)}, p , q) | = \frac{|a_\alpha \phi_\alpha(p)|}{d(p,q)} &\leq |a_\alpha| \frac{T_\alpha (p)}{d(p,q)}\\
&\leq \frac{2 |a_n|}{\pi d(p,q)} (d(p_\alpha,q_\alpha)-d(p_\alpha,p)) \\ 
&< \frac{2 |a_n|}{\pi d(p,q)} (d(p_\alpha,q)-d(p_\alpha,p)) \leq  \frac{2 }{\pi}\|a\|_\infty. 
\end{align*}

\item Suppose that there exists $\alpha \in {\Gamma}$ such that $p, q \in B(p_\alpha,\, d(p_\alpha, q_\alpha))$. Without loss of generality, assume that $ d(p_\alpha,q)\leq d(p_\alpha,p)$. 
Then,
\begin{equation*}\label{eq:spike-1}
|S(f^{(a)}, p, q)|=|a_\alpha| |S(\phi_\alpha, p, q)|=|a_\alpha|\frac{2 d(p_\alpha, q_\alpha)}{\pi}\frac{\sin\left( \frac{\pi d(p_\alpha, p)}{2 d(p_\alpha, q_\alpha)} \right) - \sin\left( \frac{\pi d(p_\alpha, q)}{2 d(p_\alpha, q_\alpha)} \right)}{ d(p,q)}.
\end{equation*}
Note that $|\sin(r)-\sin(s)| \leq |r-s|$ for $r,s\in \mathbb{R}$ and the inequality is strict unless $r=s$. Therefore, 
$$
|S(\phi_\alpha, p, q)| < \frac{2 d(p_\alpha, q_\alpha)}{\pi}\frac{\frac{\pi d(p_\alpha, p)}{2 d(p_\alpha, q_\alpha)}-\frac{\pi d(p_\alpha, q)}{2 d(p_\alpha, q_\alpha)}}{ d(p,q)}=\frac{ d(p_\alpha, p) - d(p_\alpha, q)}{ d(p, q)}\leq 1.
$$

Moreover, choose a sequence $\{p_n\}_n\subset B(p_\alpha,\, d(p_\alpha, q_\alpha))$ such that $d(p_\alpha, p_n)$ converges decreasingly to $0$ as $n$ tends to infinity. 
Then 
\begin{equation}\label{eq:phi_pna}
\lim_{n\to\infty} |S(\phi_\alpha, p_{\alpha}, p_n)| = \lim_{n\to\infty} \frac{\sin\left( \frac{\pi d(p_\alpha, p_n)}{2d(p_\alpha, q_\alpha)} \right)}{\frac{\pi d(p_\alpha, p_n)}{2d(p_\alpha, q_\alpha)}} = 1.
\end{equation}

\item Suppose that there exist some $(\alpha, \beta) \in \widetilde{\Gamma}$ such that $p\in B(p_\alpha,\, d(p_\alpha, q_\alpha))$ and $q\in B(p_\beta,\, d(p_\beta, q_\beta))$ with $a_\alpha \neq 0$ and $a_\beta \neq 0$. We assume that $\alpha \neq \beta$. 
In this case,
\[
|S(f^{(a)}, p, q)|  = \frac{|a_\alpha \phi_\alpha (p) - a_\beta \phi_\beta (q)|}{d(p,q)}. 
\]
Consider $T:= a_\alpha T_\alpha + a_\beta T_\beta \in \lip(M)$.
Notice  that  
\begin{align*}
|S(T,p,q)| &= \frac{|a_\alpha| T_\alpha (p) + |a_\beta| T_\beta (q)}{d(p,q)} \\
&\leq \frac{2\|a\|_\infty}{\pi d(p,q)} (d(p_\alpha,q_\alpha)-d(p_\alpha,p)+d(p_\beta,q_\beta)-d(p_\beta,q)) \\ 
&\leq \frac{2\|a\|_\infty}{\pi d(p,q)} (d(p_\alpha, p_\beta) - d(p_\alpha, p) - d(p_\beta, q)) \leq \frac{2}{\pi}\|a\|_\infty. 
\end{align*}
As a consequence, we have 
\[
|S(f^{(a)}, p, q)| \leq |S(T, p, q)| \leq \frac{2}{\pi}\|a\|_\infty < \|a\|_\infty.
\]
\end{itemize}

Finally, let $\gamma_0\in\Gamma$ be such that $|a_{\gamma_0}|=\|a\|_\infty$, and note from \eqref{eq:phi_pna} that
$$
\sup_{r\in M\setminus\{p_{\gamma_0}\}} |S(f^{(a)}, p_{\gamma_0}, r)| = \sup_{r\in B(p_{\gamma_0}, d(p_{\gamma_0},q_{\gamma_0}))} |a_{\gamma_0}| |S(\phi_{\gamma_0}, p_{\gamma_0}, r)| = \|a\|_\infty.
$$
This concludes the proof.

(b): Suppose that $\Gamma \geq 2^{\aleph_0}$. Let $S = \{ -1, 1 \}^\mathbb{N} = \{ s^{(\gamma)} \}_{\gamma \in \Gamma_1}$, where $\Gamma_1\subset \Gamma$ is an index set with cardinality $|\Gamma_1|=2^{\aleph_0}$. 
For each $n \in \mathbb{N}$, consider $e_n \in \Lip(M)$ given by 
\[
e_n (p) = \begin{cases}
s^{(\gamma)} (n) \phi_\gamma (p),\ &\text{if $d(p_{\gamma}, p)\leq d(p_{\gamma}, q_{\gamma})$},\\
0,\ &\text{if $d(p_{\gamma}, p)\geq d(p_{\gamma}, q_{\gamma}),$}
\end{cases}
\]
where $s^{(\gamma)}= (s^{(\gamma)} (n))_{n=1}^\infty$. Note that $\|e_n\|=1$ since $\|\phi_\gamma\|=1$ for every $\gamma \in \Gamma_1$. 

For each $a= (a_n)_n \in \ell_1$, let $g^{(a)} := \sum_{n=1}^\infty a_n e_n$. Then it is clear that $\|g^{(a)} \| \leq \|a\|_1$. To see that $\|g^{(a)}\| \geq \|a\|_1$, consider $b \in S$ given by 
\[
b_n = 1 \text{ for every } n \in \mathbb{N} \text{ with } a_n = 0, \, \text{ and } \, b_n = \sign (a_n) \text{ for every } n \in \mathbb{N} \text{ with } a_n \neq 0.
\]
Pick $\gamma_0 \in \Gamma_1$ so that $b = s^{(\gamma_0)}$. Then 
\begin{align*}
\sup_{q \neq p_{\gamma_0}} |S(g^{(a)}, p_{\gamma_0}, q)| &\geq \sup_{q \in B(p_{\gamma_0}, d(p_{\gamma_0}, q_{\gamma_0}))} |S(g^{(a)},p_{\gamma_0}, q)| \\
&=  \sup_{q \in B(p_{\gamma_0}, d(p_{\gamma_0}, q_{\gamma_0}))} \left(\sum_{n=1}^\infty |a_n| \right) |S(\phi_{\gamma_0},p_{\gamma_0}, q)| = \|a\|_1.
\end{align*}

This shows that $\|g^{(a)}\| = \|a\|_1$ and $g^{(a)}$ attains its pointwise norm at $p_{\gamma_0}$, and note that $g^{(a)}$ does not strongly attain its norm, arguing as in case (a).
\end{proof}

\section{Linearity of strongly norm-attaining Lipschitz functions}\label{section:linearity}

In this section, we study whether the sets $\sna(M)$ and $(\lip(M)\setminus \sna(M))\cup\{0\}$ can be linear spaces themselves for some suitable metric space $M$. This leads to solve the problem \ref{q2} in the negative. Recall first the properties \ref{a1} and \ref{a2} that are essential in the proof of Theorem \ref{Main-Theorem-c0-PNA}. By slightly modifying \ref{a2}, we obtain the following.  

\begin{proposition}\label{prop:nosna-c0-and-non-linearity}
Let $M$ be an infinite pointed metric space such that $\sna(M)$ contains an isometric copy of $c_0$ with basis $\{f_n\}_n$ satisfying the following properties:
\begin{enumerate}
\itemsep0.25em
\item[\textup{(a1)}] if $n\neq m$, then $\supp(f_n) \cap \supp (f_m) = \emptyset$;
\item[\textup{(a2')}] given $a=(a_n)_n \in\ell_\infty$, if $f^{(a)}$ is the pointwise limit of $\sum_{n} a_n f_n$, then $\|f^{(a)}\|=\|a\|_\infty$, and moreover, $f^{(a)}$ strongly attains its norm if and only if there exists $n_0\in\bbn$ such that $|a_{n_0}|=\|a\|_\infty$.
\end{enumerate}
Then the sets $\sna(M)$ and $(\lip(M)\setminus \sna(M))\cup\{0\}$ are not linear spaces. 
\end{proposition}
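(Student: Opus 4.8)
The plan is to reduce both non-linearity statements to elementary facts about attainment of the sup norm in $\ell_\infty$. The crucial observation is that, by the disjointness of supports in (a1), for any $a=(a_n)_n, b=(b_n)_n\in\ell_\infty$ one has the pointwise identity $f^{(a)}+f^{(b)}=f^{(a+b)}$: at a point lying in a (necessarily unique) support $\supp(f_n)$ all terms but the $n$-th vanish, so the values add coordinatewise, while points outside every support contribute $0$. Thus the correspondence $a\mapsto f^{(a)}$ respects addition, and by (a2') the function $f^{(a)}$ belongs to $\sna(M)$ precisely when the sequence $a$ attains its sup norm at some coordinate. Since both $\sna(M)$ and $(\lip(M)\setminus\sna(M))\cup\{0\}$ are stable under multiplication by nonzero scalars (norm-attainment being scale-invariant) and contain $0$, any failure of linearity must already occur at the level of addition; hence it suffices, for each of the two sets, to exhibit two of its members whose sum lies outside it.

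To see that $\sna(M)$ is not linear, I would choose $a=(1,1,1,\dots)$ and $b=(-1/n)_n$. Here $\|a\|_\infty=1$ is attained at every coordinate and $\|b\|_\infty=1$ is attained at $n=1$, so $f^{(a)},f^{(b)}\in\sna(M)$ by (a2'). However $a+b=(1-1/n)_n$ has sup norm $1$, which is approached but never attained, so $f^{(a)}+f^{(b)}=f^{(a+b)}\notin\sna(M)$, and $\sna(M)$ fails to be closed under addition.

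To see that $(\lip(M)\setminus\sna(M))\cup\{0\}$ is not linear, I would instead take $a=(1-1/n)_n$ and $b=(1/n-\tfrac34)_n$. The sequence $a$ has sup norm $1$ approached from below and never attained; the sequence $b$ satisfies $|b_1|=\tfrac14<\tfrac34$ while $|b_n|=\tfrac34-1/n\uparrow\tfrac34$ for $n\ge 2$, so $\|b\|_\infty=\tfrac34$ is likewise not attained. Hence $f^{(a)}$ and $f^{(b)}$ are nonzero elements of $(\lip(M)\setminus\sna(M))\cup\{0\}$ by (a2'). On the other hand $a+b=(\tfrac14,\tfrac14,\dots)$ is a nonzero constant sequence whose sup norm is attained at every coordinate, so $f^{(a)}+f^{(b)}=f^{(a+b)}\in\sna(M)\setminus\{0\}$ does not belong to the set, which is therefore not closed under addition either.

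There is no deep obstacle in this argument; the only points requiring care are the verifications of the attainment and non-attainment claims for the chosen sequences. In particular, in the second example one must check that the downward shift by $\tfrac34$ keeps $|b_1|$ strictly below the tail supremum $\tfrac34$, which is exactly what makes $b$ fail to attain its norm while simultaneously forcing the cancellation $a+b\equiv\tfrac14$. The other essential ingredient is the coordinatewise additivity $f^{(a)}+f^{(b)}=f^{(a+b)}$, where the disjoint-support hypothesis (a1) is used.
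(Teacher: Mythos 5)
Your proposal is correct and takes essentially the same route as the paper: both arguments use the disjoint-support identity $f^{(a)}+f^{(b)}=f^{(a+b)}$ together with (a2') to reduce each non-linearity claim to exhibiting explicit sequences in $\ell_\infty$ whose sup norms are, or are not, attained at some coordinate. The paper's witnesses are $e_1$ and $e_1+w$ (respectively $z$ and $w$, with $z$ non-attaining, $z_1\neq 0$, and $w=z-z_1e_1$) where you use $(1,1,\dots)$ and $(-1/n)_n$ (respectively $(1-1/n)_n$ and $(1/n-\tfrac34)_n$), but the underlying mechanism is identical.
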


\begin{proof}
Pick any $z=(z_n)_{n=1}^\infty \in S_{\ell_\infty}$ so that $z_1 \neq 0$ and $|z_n| < 1$ for every $n \in \mathbb{N}$. Define $w=(w_n)_{n=1}^\infty \in S_{\ell_\infty}$ by $w_1 = 0$ and $w_n = z_n$ for every $n>1$. Note that $f_1$ and $f_1+f^{(w)}$ are both in $\sna(M)$, while $f^{(w)}$ is not, so $\sna(M)$ cannot be a linear space. Similarly, note that $f^{(z_1 e_1)}=f^{(z)}-f^{(w)}\in\sna(M)$, while $f^{(z)},f^{(w)}\notin \sna(M)$. This shows that $(\lip(M)\setminus \sna(M))\cup\{0\}$ is not a linear space.\qedhere
\end{proof}

The following lemma will be used later. 

\begin{lemma}\label{lemma:pna-minus-sna-non-empty-nosna}
Let $M$ be an infinite pointed metric space such that $\pna(M)\setminus \sna(M)\neq \emptyset$. Then $(\lip(M)\setminus \sna(M))\cup\{0\}$ is not a linear space.
\end{lemma}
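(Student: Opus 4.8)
The plan is to show that $V:=(\lip(M)\setminus\sna(M))\cup\{0\}$ fails to be closed under addition, by exhibiting two non-strongly-norm-attaining functions whose sum is a nonzero element of $\sna(M)$. Since $V$ is automatically closed under scalar multiplication (a nonzero scalar multiple of a non-$\sna$ function is again non-$\sna$), this will be enough to conclude that $V$ is not a linear space. The whole point is to let the function $f$ from the hypothesis prescribe \emph{which} strongly norm-attaining function to use.

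I would fix a nonzero $f\in\pna(M)\setminus\sna(M)$ (it is nonzero because the zero function attains its norm trivially, so lies in $\sna(M)$) and normalize $\|f\|=1$. Let $p\in M$ be a point at which $f$ attains its pointwise norm, so $\sup_{q\neq p}|S(f,p,q)|=1$. Picking $q_n$ with $|S(f,p,q_n)|\to 1$ and replacing $f$ by $-f$ if necessary (which remains in $\pna(M)\setminus\sna(M)$), I may assume $S(f,p,q_n)\to 1$. The key auxiliary function will be the normalized distance to $p$, namely $\psi_p(x):=d(0,p)-d(x,p)$, which lies in $\lip(M)$, is $1$-Lipschitz, and satisfies $S(\psi_p,p,q)=-1$ for every $q\neq p$; hence $\|\psi_p\|=1$ and $\psi_p\in\sna(M)\setminus\{0\}$. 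I then set $g_1:=f$ and $g_2:=\psi_p-f$, so that $g_1,g_2\in\lip(M)$ with $g_1+g_2=\psi_p\in\sna(M)\setminus\{0\}$, and it remains only to verify that $g_2\notin\sna(M)$.

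To do this I would first identify the norm of $g_2$: along the pairs $(p,q_n)$ one has $S(\psi_p-f,p,q_n)=-1-S(f,p,q_n)\to -2$, while the triangle inequality gives $\|\psi_p-f\|\le\|\psi_p\|+\|f\|=2$, so in fact $\|\psi_p-f\|=2$. The decisive step is that this value is never attained: for an arbitrary pair $(a,c)\in\widetilde{M}$, linearity of $S(\cdot,a,c)$ gives $|S(\psi_p-f,a,c)|\le|S(\psi_p,a,c)|+|S(f,a,c)|\le 1+|S(f,a,c)|$, and since $f$ does \emph{not} strongly attain its norm we have the strict inequality $|S(f,a,c)|<\|f\|=1$ for \emph{every} pair. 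Hence $|S(\psi_p-f,a,c)|<2=\|\psi_p-f\|$ for all $(a,c)$, so $\psi_p-f\notin\sna(M)$. Consequently $g_1,g_2\in\lip(M)\setminus\sna(M)\subseteq V$ while $g_1+g_2=\psi_p\notin V$, which proves that $V$ is not a linear space.

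I expect the only genuinely delicate step to be the verification that $g_2=\psi_p-f$ does not attain its norm. The crux is that basing the distance function at the pointwise-attaining point $p$ forces $\|\psi_p-f\|$ to equal the maximal possible value $\|\psi_p\|+\|f\|=2$ (this is precisely where the pointwise attainment at $p$, together with the sign normalization $S(f,p,q_n)\to 1$, enters), while the failure of $f$ to strongly attain its norm supplies the strict inequality $|S(f,a,c)|<1$ that prevents any single pair from realizing the value $2$. One should also keep the minor bookkeeping that $\psi_p$ and $\psi_p-f$ vanish at the base point and hence genuinely belong to $\lip(M)$, which is the reason for the normalizing constant $d(0,p)$ in the definition of $\psi_p$.
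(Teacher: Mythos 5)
Your proposal is correct and follows essentially the same route as the paper: the paper also uses the distance function based at the pointwise-attainment point (relocating the base point rather than subtracting the constant $d(0,p)$), shows that $f$ and $f+g$ both have norm $2$-type sums that cannot be attained because $|S(f,a,c)|<\|f\|$ for every pair, and concludes that $g=(f+g)-f\in\sna(M)$ violates linearity. Your $\psi_p$ is, up to sign and an additive constant, the paper's $g$, and your direct triangle-inequality verification is the contrapositive of the paper's "if $f+g$ attains, so does $f$" step.
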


\begin{proof}
Let $f\in \pna(M)\setminus \sna(M)\neq \emptyset$ be a norm one Lipschitz function that attains its pointwise norm at some $p_0\in M$. Without loss of generality, we assume that $p_0$ is the distinguished point $0$. 
Note that, up to multiplying $f$ by $-1$ if needed, there exists $\{p_n\}_{n=1}^{\infty}\subset M$ such that $S(f, p_0, p_n)$ converges to $1$. Consider $g\in\lip(M)$ given by $g(p):=d(p_0, p)$ for every $p\in M$, and note that $\|g\|=1$ and $g\in \sna(M)$. Since $S(f+g, p_0, p_n)$ converges to $2$, we have that $\|f+g\|=2$. However, $f+g \not\in\sna(M)$ since if $f+g$ strongly attains its norm at some pair $(p,q)$, then so does $f$, which is a contradiction. Summarizing, $f+g$ and $f$ are not in $\sna(M)$ while $g = (f+g)-f \in \sna(M)$.
\end{proof}

\begin{theorem}\label{thm:sna_not_linear}
For any infinite metric space $M$, we have that the sets $\sna(M)$ and $(\lip(M)\setminus \sna(M))\cup\{0\}$ are not linear spaces. 
\end{theorem}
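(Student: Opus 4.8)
The plan is to reduce both non-linearity statements to the construction of a single auxiliary function. Write $g_0(x):=d(x,0)$ for the distance to the base point; then $g_0\in\sna(M)$ with $\|g_0\|=1$, and $S(g_0,0,q)=1$ for every $q\neq 0$, so $g_0$ strongly attains at every pair of the form $(0,q)$. I would prove the theorem by producing a function $v\in\sna(M)$ for which $g_0+v\in\pna(M)\setminus\sna(M)$. This single object settles everything: since $\sna(M)$ is a symmetric cone, the fact that $g_0,v\in\sna(M)$ while $g_0+v\notin\sna(M)$ shows $\sna(M)$ is not closed under addition, hence is not a linear space; and since $g_0+v\in\pna(M)\setminus\sna(M)$, Lemma \ref{lemma:pna-minus-sna-non-empty-nosna} immediately gives that $(\lip(M)\setminus\sna(M))\cup\{0\}$ is not a linear space either.

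The mechanism behind the construction is the following. I want $v$ with $\|v\|=c$ (one may normalize $c\le 1$) whose directional supremum from the base point, $\sup_{q}S(v,0,q)$, equals $c$ but is only approached along some sequence $(x_n)$ and never attained, while $v$ genuinely attains its norm at a single \emph{off-radial} pair $(a,b)$, meaning $|d(a,0)-d(b,0)|<d(a,b)$ so that $S(g_0,a,b)<1$. Granting such a $v$, one checks automatically that $\|g_0+v\|=1+c$: indeed $\|g_0+v\|\le 1+c$, while $S(g_0+v,0,x_n)=1+S(v,0,x_n)\to 1+c$. This same computation shows $g_0+v$ attains its pointwise norm at $0$. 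It does not attain strongly: a pair $(0,q)$ realizes $1+c$ only if $S(v,0,q)=c$, which never happens; and a pair $(p,q)$ with $p,q\neq 0$ realizes $1+c$ only if simultaneously $S(g_0,p,q)=1$ and $S(v,p,q)=c$, the latter forcing $(p,q)=(a,b)$, where the former fails. Hence $g_0+v\in\pna(M)\setminus\sna(M)$.

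It remains to build $v$, and here I would split on whether $M$ is uniformly discrete. If $M$ is not uniformly discrete, one does not even need $v$: choosing pairs $(p_n,q_n)$ with $d(p_n,q_n)\to 0$ and passing to a separated subsequence (using different scales around an accumulation point if the $p_n$ cluster), one obtains disjointly supported norm-one bumps $\{f_n\}\subseteq\sna(M)$, each strongly attaining at $(p_n,q_n)$, for which the estimates of Lemma \ref{lemma:new-extension-lemma-[0,1]} yield properties (a1) and (a2$'$); Proposition \ref{prop:nosna-c0-and-non-linearity} then delivers both conclusions at once. If $M$ is uniformly discrete, with $\inf_{p\neq q}d(p,q)=\delta>0$, I would construct $v$ by hand: fix a sequence of distinct points $(x_n)$ along which the molecules $\delta_{x_n}/d(0,x_n)$ remain separated (so that no directional supremum is forced to be attained), assign values so that $S(v,0,x_n)\uparrow c$ without reaching $c$, and superimpose a controlled spike at an off-radial pair $(a,b)$ forcing $v\in\sna(M)$ with unique norming pair $(a,b)$.

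The main obstacle is exactly this uniformly discrete case. These are the spaces for which an isometric copy of $c_0$ inside $\sna(M)$ may fail to exist, so Proposition \ref{prop:nosna-c0-and-non-linearity} is unavailable and the $c_0$-machinery cannot be invoked. The delicate point in producing $v$ is the global norm bookkeeping: one must guarantee that no pair of $M$ (in particular no ``cross'' pair mixing the approaching sequence $(x_n)$ with the spike, and no pair of the form $(x_n,x_m)$) produces a slope exceeding $c$, so that $\|v\|=c$ holds exactly and the only norming pair is $(a,b)$, while still arranging $S(v,0,x_n)\to c$. Verifying simultaneously the non-attainment of the directional supremum from $0$ and the genuine attainment at $(a,b)$, all within the Lipschitz constraint, is where the real work lies.
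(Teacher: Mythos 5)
Your reduction scheme is internally consistent, and your first case is exactly the paper's: when $M$ is not uniformly discrete, disjointly supported bumps satisfying (a1) and (a2$'$) plus Proposition \ref{prop:nosna-c0-and-non-linearity} settle both claims. The genuine gap is the uniformly discrete case, which you flag as ``where the real work lies'' but never carry out --- and, worse, the auxiliary function $v$ you specify there provably does not exist in general, so no amount of norm bookkeeping can complete the plan as stated. Take $M=\{0\}\cup\{p_i:i\in\mathbb{N}\}$ equilateral, i.e.\ $d(x,y)=1$ for all $x\neq y$; this is infinite and uniformly discrete. Your $v$ must satisfy $S(v,0,x_n)=v(x_n)\to c=\|v\|$ while $S(v,0,q)=v(q)<c$ for every $q$. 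The Lipschitz condition then forces, for every $q\in M$,
\[
v(q)\;\geq\; v(x_n)-c\,d(q,x_n)\;=\;v(x_n)-c\;\longrightarrow\;0,
\]
so $v\geq 0$ on all of $M$. Consequently, for any pair of nonzero points $|v(a)-v(b)|\leq\max\{v(a),v(b)\}<c=c\,d(a,b)$, and for pairs through $0$ we have $|v(q)|<c$; hence $v$ attains its norm at no pair whatsoever, contradicting $v\in\sna(M)$. (A second, independent obstruction: you require the norming pair of $v$ to be off-radial, but in $M=\mathbb{N}\cup\{0\}\subseteq\mathbb{R}$ every pair satisfies $|d(a,0)-d(b,0)|=d(a,b)$, so off-radial pairs do not exist at all.) The theorem is of course still true for these spaces; what fails is your single-object reduction.

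The defect is structural, not a matter of missing estimates: by insisting that $\|g_0+v\|=1+c$ be approached along slopes from $0$, you make the ``missing'' part of the sum's norm equal to the \emph{entire} norm of $v$, and the Lipschitz constraint then propagates positivity from the approaching sequence to all of $M$, destroying strong attainment of $v$. The paper's proof decouples the two roles. For non-linearity of $\sna(M)$ in the uniformly discrete case it adds to a non-attaining norm-one $g$ a two-point spike $h$ of slope $>12$, so that $g+h$ attains only at the spike pair; then $h,\,g+h\in\sna(M)$ while $g=(g+h)-h\notin\sna(M)$. For the complement it splits further: if some $p_0$ has $\neib(p_0)=\emptyset$, a one-point spike at $p_0$ lies in $\pna(M)\setminus\sna(M)$ and Lemma \ref{lemma:pna-minus-sna-non-empty-nosna} applies (this is the step you do use correctly); otherwise a C\'uth--Johanis-type function $h^{(e_2)}\notin\sna(M)$ is combined with a \emph{small} spike $f\in\sna(M)$ supported off $\supp(h^{(e_2)})$, so that $h^{(e_2)}+f\notin\sna(M)$. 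In both constructions the $\sna$-witness attains at a pair where the other summand contributes nothing (or with controlled opposite sign), and the norm of the sum is never required to split additively along a common sequence --- precisely the flexibility your off-radial/$(1+c)$ mechanism forbids. To salvage your approach you would at minimum have to drop the off-radial requirement and allow $\|g_0+v\|<1+c$ with the norming pair of $v$ radial of opposite sign, which is in effect a different proof.
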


\begin{proof}
If $M$ is not uniformly discrete, then the proof of \cite[Theorem 4.2]{DMQR23} provides $c_0 \stackrel{1}{=} \overline{\spann}  \{f_n\} \subseteq \sna(M)$ such that $\{f_n\}$ satisfies the properties in Proposition \ref{prop:nosna-c0-and-non-linearity}. Therefore, the sets $\sna(M)$ and $(\lip(M)\setminus \sna(M))\cup\{0\}$ are not linear spaces. Thus, we may assume $M$ to be uniformly discrete. Denote $R:=\inf\{d(x,y):\, (x,y)\in\widetilde{M}\}$.

\textbf{Claim A}: $\sna(M)$ is not a linear space. 

Let $g\in\lip(M)\setminus \sna(M)$ be such that $\|g\|=1$ (such function exists as $\mathcal{F}(M)$ is not reflexive). Let $(x_0, y_0)\in\widetilde{M}$ be such that $d(x_0, y_0)<\frac{3R}{2}$. Define $h_1:M\rightarrow \bbr$ as $h_1(x_0)=9R$, $h_1(y_0)=-9R$, and $h_1(p)=0$ for all $p\in M\setminus\{x_0, y_0\}$, and let $h=h-h(0)\in\lip(M)$.

Let $(p,q)\in\widetilde{M}$. We clearly have that
$$\|h\|\geq |S(h, x_0, y_0)|=\frac{18R}{d(x_0, y_0)}>\frac{18R}{\frac{3}{2}R}=12.$$
On the other hand, if $\{p,q\}\neq\{x_0, y_0\}$, we have
$$|S(h, p, q)|\leq \frac{9R}{d(p,q)}\leq \frac{9R}{R}=9.$$
So $\|h\|>12$, and $h$ strongly attains its norm (only) at $(x_0,y_0)$.

Consider now the function $f=g+h$, and let $(p,q)\in\widetilde{M}$. We have the following.
\begin{itemize}
\itemsep0.25em
\item If $\{p,q\}=\{x_0, y_0\}$, then $|S(f,p,q)|\geq |S(h,x_0, y_0)| - |S(g,x_0,y_0)|>12-1=11$.
\item If $\{p,q\}\neq \{x_0, y_0\}$, then $|S(f, p, q)|\leq |S(h, p, q)| + |S(g, p, q)| < 9+1=10$.
\end{itemize}
Therefore, $\|f\|>11$, and $f$ strongly attains its norm (only) at $(x_0, y_0)$. As a consequence, $\sna(M)$ is not a linear space.

\textbf{Claim B}: $(\lip(M)\setminus \sna(M))\cup\{0\}$ is not a linear space.

For each $p\in M$, let $\neib(p):=\{q\in M:\, d(p,q)=R(p)\}$. We distinguish two cases.

\underline{Case 1}: Suppose that there exists $p_0 \in M$ such that $\neib(p_0 )=\emptyset$.

Let $f:M\rightarrow \bbr$ be given by $f(p_0 )=R(p_0)$ and $f(p)=0$ for all $p\in M\setminus\{p_0\}$. Note that $|S(f,p,q)| < 1$ for every $(p,q) \in \widetilde{M}$ and $|S(f,p_0, q_n)| \rightarrow 1$ whenever $\{q_n\}_{n=1}^\infty \subseteq M$ is such that $d(p_0, q_n) \rightarrow R(p_0)$. 
Therefore, $\|f-f(0)\|=\|f\|=1$ and $f-f(0) \in \pna(M) \setminus \sna(M)$. By Lemma \ref{lemma:pna-minus-sna-non-empty-nosna}, we conclude that $(\lip(M)\setminus \sna(M))\cup\{0\}$ is not a linear space.

\underline{Case 2}: For every $p\in M$, the set $\neib(p)$ is non-empty.

Recall from the proof of \cite[Theorem 5]{CJ17}, we can find a sequence $\{x_n\}_n \subseteq M\setminus \{0\}$ and $\{r_n\}_n \subseteq [0,\infty)$ satisfying $d(x_n,x_m) \geq r_n + r_m$ for every $n\neq m$, and 
\[
\lim_{n\rightarrow\infty} \frac{r_{2n} + r_{2n+1}}{d(x_{2n}, x_{2n+1})} = 1. 
\]
Let us point out that the proof of \cite[Theorem 5]{CJ17} actually shows that $\{r_n\}_n$ can be chosen so that $r_n \geq \frac{R}{4}$ for all $n\in\bbn$.
Let us consider $M$ as a metric subspace of $\mathcal{F}(M)$. Given disjoint increasing sequences $\{n_\ell^k\}_{\ell=1}^\infty \subseteq \mathbb{N}$, $k \in \mathbb{N}$, put $\widetilde{f}_k = \sum_{\ell=1}^\infty g_{2n_\ell^k} - g_{2n_\ell^k + 1} : \mathcal{F}(M) \rightarrow \mathbb{R}$, where $g_n(\mu) = \max \{r_n - \|\mu- {x_n}\|, 0\}$ for every $\mu \in \mathcal{F}(M)$. For simplicity, set $f_k := \widetilde{f}_k \vert_M \in \lip(M)$ for every $k \in \mathbb{N}$. From the proof of \cite[Theorem 5]{CJ17}, we observe $\{f_n \}_n \subseteq \lip(M)$ satisfies the conditions \ref{a1} and \ref{a2} in Theorem \ref{Main-Theorem-c0-PNA}. 

Consider the subspace $\{ z^{(a)}: a \in \ell_\infty\}$ of $\ell_\infty$ from the proof of Lemma \ref{lemma:special-c0-in-ell-infty}. 
For each $a=(a_n)_n \in \ell_\infty$, let us define $h^{(a)} \in \lip(M)$ to be the pointwise limit of
\[
h^{(a)} := f^{( z^{(a)} )} = \sum_n z^{(a)} (n) f_n. 
\]
Then $\|h^{(a)}\| = \|z^{(a)}\|_\infty = \|a\|_\infty$, and the argument from the proof of Theorem \ref{Main-Theorem-c0-PNA} shows that $h^{(a)}\in (\lip(M)\setminus \sna(M))\cup\{0\}$.

Fix $p_0=x_{2n_1^2}\in \supp(f_2)$. Let us denote by $\{e_n\}_n$ the canonical coordinate vectors in $\ell_\infty$. Then we have $\supp(h^{(e_n)})\cap \{p_0\}=\emptyset$ for all $n >1$. Consider $h^{(e_2)}\in \lip(M)\setminus \sna(M)$ and $f\in\lip(M)$ given by $f(p_0)=\frac{R}{8}$ and $f(p)=0$ for all $p\in M\setminus\{p_0\}$. Since $\neib(p_0)$ is non-empty, it is clear that $\|f\|=\frac{R/8}{R(p_0)}$ and $f$ strongly attains its norm (only) at every pair $(p_0,q)$ with $q\in\neib(p_0)$. Finally, consider the function $g:=h^{(e_2)}+f \in \lip(M)$ and let $(p,q)\in\widetilde{M}$ be given.
\begin{itemize}
\itemsep0.25em
\item If $p,q\notin \{p_0\}\cup \supp(h^{(e_2)})$, then $|S(g, p, q)|=0$.
\item If $p=p_0$ and $q\notin \supp(h^{(e_2)})$, then $|S(g, p, q)|=|S(f,p,q)|\leq \frac{\frac{R}{8}}{R(p)}\leq \frac{1}{8}$.
\item Suppose that $p=p_0$ and $q\in\supp(h^{(e_2)})$. Note first from the construction that 
\[
h^{(e_1)} (p) = \left( \sum_{n=1}^\infty z^{(e_1)} (n) f_n \right) (x_{2n_1^2}) = \left( \sum_{m=1}^\infty (1-2^{-m}) f_{2^m} \right) (x_{2n_1^2}) = \frac{r_{2n_1^2}}{2}.
\]
Therefore, setting $\alpha = \frac{R}{4 r_{2n_1^2}}$, we have $\alpha h^{(e_1)}(p) = \frac{R}{8}$.
From our choice of $\{r_n\}_n$, note that $\alpha \in (0,1]$. Now, 
\begin{align*}
|S(g, p, q)| = \left|\frac{h^{(e_2)}(q) - \frac{R}{8}}{d(p,q)}\right| = |S(\alpha h^{(e_1)} + h^{(e_2)}, p, q)| <  \| h^{ (\alpha e_1 + e_2 )} \|=1. 
\end{align*} 
\item If $p\in \supp(h^{(e_2)})$ and $q\neq p_0$, then $|S(g, p, q)|=|S(h^{(e_2)},p,q)|<1$. 
\end{itemize}
Therefore, $\|g\|=1$ and $g\in \lip(M)\setminus \sna(M)$. Since $g-h^{(e_2)} = f \in\sna(M)$, we conclude that $(\lip(M)\setminus \sna(M))\cup\{0\}$ is not a linear space.
\end{proof}

\section{Spaceability of some non-norm attaining Lipschitz functions}\label{LipNA}

In recent years, there has been growing interest in the study of Lipschitz functions which attain their norm in many different ways, due to the limitations of strong norm-attainment for Lipschitz functions (see \cite{CCGMR19,Choi23,CCM20,Godefroy16,KMS16} for instance). Along this direction, we present some further spaceability results for classes of Lipschitz functions which attain their norm in one way but do not in another.
First, let us recall some notations.

Let $X$ and $Y$ be real Banach spaces. Recall that $f\in\lip(X, Y)$ \textit{attains its norm locally directionally} at a point $\overline{x}$ in the direction $u\in S_X$ toward $z\in \|f\| S_Y$ if there exists $\{(p_n, q_n)\}_{n=1}^{\infty}\subset \widetilde{X}$ such that
$$
S(f,p_n,q_n)\longrightarrow z,\quad \frac{p_n-q_n}{\|p_n-q_n\|}\longrightarrow u\quad \text{and}\quad p_n,q_n\longrightarrow \overline{x}.
$$
The set of Lipschitz function $f\in\lip(X, Y)$ that attain their norm locally directionally is denoted by $\ldira(X, Y)$.

We say that $f\in\lip(X, Y)$ \textit{attains its norm through a derivative} at a point $x\in X$ in the direction $e\in S_X$ if 
$$
f'(x,e):=\lim_{t\to 0}\frac{f(x+te)-f(x)}{t}\in Y\text{ exists and } \quad\|f'(x,e)\|=\|f\|.
$$
The set of Lipschitz function $f\in\lip(X, Y)$ that attain their norm through a derivative is denoted by $\der(X, Y)$.

We will be only considering real-valued Lipschitz functions since analogous results for the vector-valued case can be obtained immediately by fixing a vector. It was shown in \cite{CJLR2023} that for every real Banach space $X$, we have
$$\der(X)\subset \pna(X)\cap \ldira(X),\,\, \pna(X)\setminus \ldira(X)\neq\emptyset,\,\, \text{and}\,\, \ldira(X)\setminus \pna(X)\neq \emptyset.$$

\begin{figure}[H]
\centering
\includegraphics[width=9.5cm]{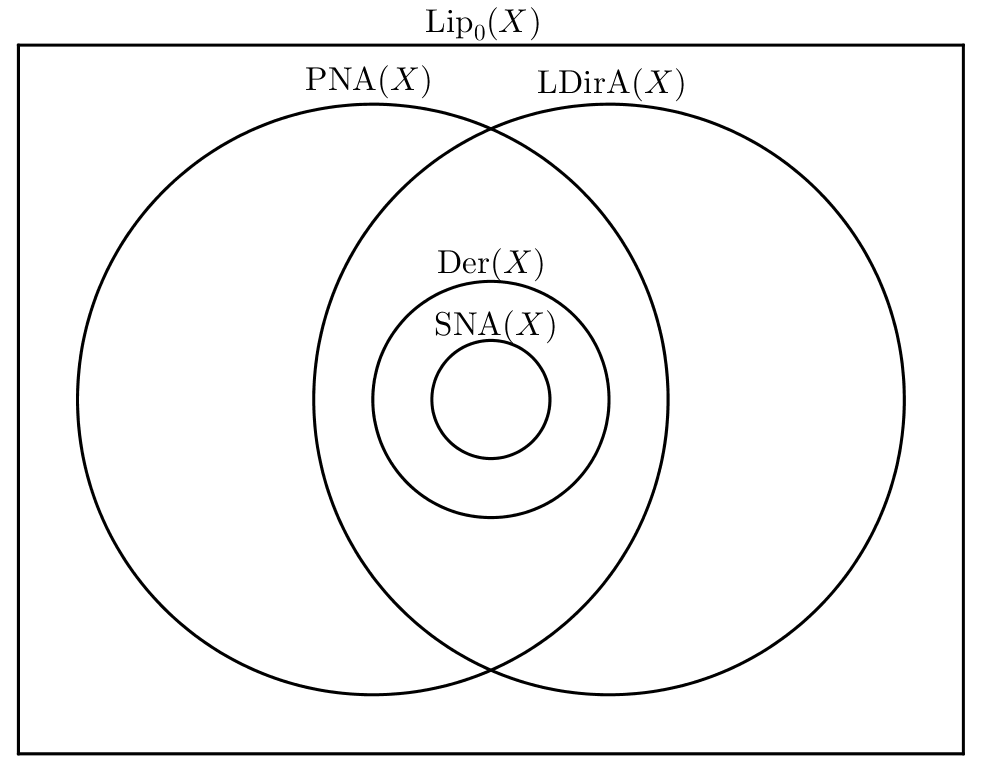}
\caption{Set inclusions for $\der$, $\ldira$, $\pna$ and $\lip$.}
\label{fig:venn-diagram}
\end{figure}

In what follows we will show that every region from the diagram in Figure \ref{fig:venn-diagram} is spaceable. To show this, we will only provide the arguments when the domain space is $\mathbb{R}$. In fact, as in \cite[Lemma 6.5]{CJLR2023}, if $X$ is any real Banach space and $Z \subseteq \lip(\mathbb{R})$ is an isometric copy of $c_0$ (resp. $\ell_\infty$) supported in $[0,+\infty)$, then the corresponding vector-valued version result can be obtained by considering the space $Z_2=\{\widetilde{f}\in\lip(X):\, f\in Z\}$, where $\widetilde{f}(x):=f(\|x\|)$ for all $x\in X$ and for each $f\in Z$. 

\begin{remark}\label{remark:der-spaceble}
It is clear that $\sna(X) \subseteq \der(X)$. Notice that $(\der(X)\setminus \sna(X))\cup\{0\}$ is clearly spaceable. In fact, $(\der(\mathbb{R})\setminus \sna(\mathbb{R}))\cup\{0\}$ contains an isometric copy of $c_0$ with basis functions $\{f_n\}_{n=1}^{\infty}$ given by $f_n(x):= \sin(x)$ if $x\in [4n\pi, (4n+2)\pi]$ and $f_n(x)=0$ otherwise (apply Lemmas \ref{lemma:equiv-sna_naf} and \ref{lemma:new-extension-lemma-[0,1]}). 
\end{remark}

\begin{proposition}\label{prop:lip-nopna-noldira-ell-infty}
For any real Banach space $X$, we have
\[
[\lip(X)\setminus (\pna(X)\cup \ldira(X)) ]\cup \{0\} \text{ isometrically contains $\ell_\infty$.}
\]
\end{proposition}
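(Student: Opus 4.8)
The plan is to reduce everything to the scalar case $X=\bbr$, exactly as indicated just before Remark \ref{remark:der-spaceble}: it suffices to produce an isometric copy $Z$ of $\ell_\infty$ inside $(\lip(\bbr)\setminus(\pna(\bbr)\cup\ldira(\bbr)))\cup\{0\}$ all of whose elements are supported in $[0,+\infty)$, and then transport it to $\lip(X)$ via the map $f\mapsto\widetilde f$, $\widetilde f(x):=f(\|x\|)$, invoking \cite[Lemma 6.5]{CJLR2023}. So from now on I would work in $\lip(\bbr)$, and the whole construction will mimic that of Theorem \ref{Main-Theorem-c0-PNA}.

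For the building blocks I would use widely separated tent functions. Fix $p_n:=10^n$ and $q_n:=10^n+1$, so that $r_n:=d(p_n,q_n)=1$ and $d(p_n,p_m)\geq 3(r_n+r_m)$ whenever $n\neq m$; note that $0$ lies in no ball $B(p_n,r_n)$. Let $g_n\in\lip(\bbr)$ be the tent $g_n(x)=\max\{0,\,r_n-|x-p_n|\}$, which is supported in $[10^n-1,10^n+1]\subseteq[0,+\infty)$, has $\|g_n\|=1$, and satisfies the cone bound \eqref{eq:g_n_bounded_by_cone} with equality. I would then take the special embedding $a\mapsto z^{(a)}$ of $\ell_\infty$ from Lemma \ref{lemma:special-c0-in-ell-infty} and set $h^{(a)}:=\sum_n z^{(a)}(n)\,g_n$ (pointwise limit). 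Applying Lemma \ref{lemma:new-extension-lemma-[0,1]} with $\lambda=z^{(a)}$ yields at once $\|h^{(a)}\|=\|z^{(a)}\|_\infty=\|a\|_\infty$, so $a\mapsto h^{(a)}$ is a linear isometry of $\ell_\infty$ into $\lip(\bbr)$. The crucial feature, inherited from Lemma \ref{lemma:special-c0-in-ell-infty}, is that every coordinate satisfies $|z^{(a)}(n)|<\|z^{(a)}\|_\infty=\|a\|_\infty$; that is, the supremum defining the norm is never realised by a single block, and this is what will simultaneously defeat both pointwise and local directional norm attainment.

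Now fix $a\neq 0$ and normalise $\|a\|_\infty=1$. To see $h^{(a)}\notin\ldira(\bbr)$ I would exploit that local directional attainment is a purely local condition: because the supports are pairwise separated, every $\bar x\in\bbr$ has a neighbourhood on which $h^{(a)}$ coincides with $z^{(a)}(n)\,g_n$ for a single index $n$ (or with $0$ when $\bar x$ lies in a gap). Hence for $p,q$ close enough to $\bar x$ one has $|S(h^{(a)},p,q)|\leq|z^{(a)}(n)|\,\|g_n\|=|z^{(a)}(n)|<1$ (or $S\equiv 0$), so the local quantity $\lim_{\delta\to 0}\sup\{|S(h^{(a)},p,q)|:p,q\in(\bar x-\delta,\bar x+\delta)\}$ stays strictly below $1=\|h^{(a)}\|$ at every $\bar x$. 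Therefore no $\bar x$ can serve as a point of local directional norm attainment.

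The verification that $h^{(a)}\notin\pna(\bbr)$ is global, and I expect it to be the main obstacle: it requires, for each fixed $p$, a bound on $\sup_q|S(h^{(a)},p,q)|$ that is strictly below $1$ \emph{uniformly in} $q$, whereas the crude per-pair estimate \eqref{lem:ga1} only gives $<\max\{|z^{(a)}(n)|,|z^{(a)}(m)|\}$, which tends to $1$ as the block index $m$ varies. I would resolve this using the extra separation $d(p_n,p_m)\geq 3(r_n+r_m)$: for $p\in B(p_n,r_n)$ and $q\in B(p_m,r_m)$ with $m\neq n$ one estimates $|S(h^{(a)},p,q)|\leq(|g_n(p)|+|g_m(q)|)/|p-q|\leq(r_n+r_m)/(d(p_n,p_m)-r_n-r_m)\leq 1/2$, a bound independent of $q$ and of $m$; for $p,q$ meeting a single block (and for $q$ outside all supports, where $h^{(a)}$ vanishes and the slope reduces to $z^{(a)}(n)S(g_n,p,q)$) one has $|S(h^{(a)},p,q)|\leq|z^{(a)}(n)|<1$. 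Consequently, for every fixed $p$, $\sup_q|S(h^{(a)},p,q)|\leq\max\{|z^{(a)}(n_p)|,\,1/2\}<1=\|h^{(a)}\|$, where $n_p$ is the only block possibly meeting a neighbourhood of $p$; hence $p$ is never a point of pointwise norm attainment and $h^{(a)}\notin\pna(\bbr)$. Combining the three parts, $\{h^{(a)}:a\in\ell_\infty\}$ is the desired isometric copy of $\ell_\infty$ inside $(\lip(\bbr)\setminus(\pna(\bbr)\cup\ldira(\bbr)))\cup\{0\}$, and the transfer to an arbitrary real Banach space $X$ described above completes the proof.
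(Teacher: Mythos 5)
Your construction is correct, but it is genuinely different from the paper's. The paper does not invoke Lemma \ref{lemma:special-c0-in-ell-infty} here: instead it builds the failure of norm-attainment into each basis function, spreading $f_n$ over the infinitely many intervals $I_{s_n^k}$ ($s_n$ the $n$-th prime) and giving it slope $1-2^{-s_n^k}<1$ on $I_{s_n^k}$, so that every slope of $f_n$ --- and hence of $f^{(a)}=\sum_n a_nf_n$ --- stays strictly below the norm, the near-norming slopes escaping to infinity; in particular $f^{(a)}$ avoids $\pna(\bbr)\cup\ldira(\bbr)$ even when some coordinate satisfies $|a_{n_0}|=\|a\|_\infty$. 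You instead keep honest tents $g_n$ (each of which \emph{does} strongly attain its norm) and import the non-attainment from the coordinatewise-strict embedding $a\mapsto z^{(a)}$ of Lemma \ref{lemma:special-c0-in-ell-infty}, which is exactly the mechanism of Theorem \ref{Main-Theorem-c0-PNA}; Lemma \ref{lemma:new-extension-lemma-[0,1]} then gives the isometry, the exclusion from $\ldira(\bbr)$ becomes a one-line locality argument, and the transfer to a general $X$ via $f\mapsto f(\|\cdot\|)$ is the same reduction the paper uses. Your route recycles earlier lemmas and keeps the blocks elementary; the paper's route is self-contained at the level of the blocks and needs no special embedding.

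One step of your pointwise argument needs patching. You conclude that for every fixed $p$ one has $\sup_q|S(h^{(a)},p,q)|\leq\max\{|z^{(a)}(n_p)|,\,1/2\}$, with $n_p$ ``the only block possibly meeting a neighbourhood of $p$.'' When $p$ lies outside all supports this does not follow from the per-pair bounds you state: $q$ may then sit in an arbitrary block $m$, your bound for that pair is $|z^{(a)}(m)|$, and $\sup_m|z^{(a)}(m)|=1$. The conclusion is nevertheless true, because of decay in the distance: for $p$ outside the support of $g_m$ and $q$ inside it, $|S(h^{(a)},p,q)|\leq |z^{(a)}(m)|\,g_m(q)/|p-q|\leq |z^{(a)}(m)|\,r_m/|p-p_m|$, so every block whose centre lies at distance at least $2$ from $p$ contributes slope at most $1/2$, while your separation ($|p_n-p_m|\geq 90$) guarantees that at most one centre can be closer --- and that block is your $n_p$. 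Adding this one observation closes the gap and completes the proof.
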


\begin{proof}
Let $\{I_n\}_{n=1}^\infty = \{[a_n,b_n]\}_{n=1}^\infty \subseteq \mathbb{R}$, where for each $n\in\bbn$, $a_n=2n-1$ and $b_n=2n$, and for simplicity, put $c_n = (a_n+b_n)/2=2n-1/2$. Let $\{s_n\}_{n=1}^\infty$ be the sequence of increasing prime numbers. For each $n \in \mathbb{N}$, we define $f_n \in \lip(\bbr)$ as follows:
\[
f_n (x): = 
\begin{cases}
(1-2^{-s_n^k}) (\frac{1}{2} -  d(x,c_{s_n^k}) ),\ &\text{ if } x \in I_{s_n^k} = [a_{s_n^k}, b_{s_n^k}] \text{ for some } k \in \mathbb{N}, \\ 
0,\ &\text{ else}.
\end{cases}
\]
Notice that $\|f_n\| = 1$ for every $n \in \mathbb{N}$ and that $\supp(f_n) \cap \supp(f_m) = \emptyset$ whenever $n \neq m$. 

Now, let $a=(a_n)_n \in \ell_\infty$ be given and consider $f^{(a)} = \sum_{n=1}^\infty a_n f_n$ (pointwise limit). Note that $\|f^{(a)}\|=\|a\|_\infty$ arguing for instance as in the proof of Lemma \ref{lemma:new-extension-lemma-[0,1]}. Similarly, arguing again as in Lemma \ref{lemma:new-extension-lemma-[0,1]}, it is routine to show that for any fixed $a \in \ell_\infty \setminus \{0\}$, $f^{(a)}$ neither attain its norm locally directionally nor attain its pointwise norm, since the local slopes and the pointwise slopes are always bounded by less than $\|a\|_\infty$ by construction.
\end{proof}

Before we present the next result, recall from \cite[Proposition 2.8]{CJLR2023} that there exists a Lipschitz function on $[0,1]$ which does not attain its pointwise norm. More precisely, given $0<\eps,\eta <1$, we can  construct $g_{\varepsilon,\eta} \in \lip ([0,1])$ as in Figure \ref{figure} below. Observe that $\|g_{\varepsilon,\eta} \| = 1$,  
\[
\sup_{q\in\bbr\setminus\{p\}} |S(g_{\varepsilon,\eta} ,p,q)| < 1 \text{ for any fixed } p \in (0,1], \,\, \text{ and } \sup_{q\in\bbr\setminus\{0\}} |S(g_{\varepsilon,\eta} ,0,q)| \leq \varepsilon.
\]
The graph of $g_{\varepsilon,\eta}$ is the union of certain cones $\{ [u_n,v_n] \cup [v_n, u_{n+1}] : n \in \mathbb{N}\}$ as in Figure \ref{figure}, and for each $n\in\bbn$, the segments $[u_n, v_n]$ and $[u_{n+1}, v_n]$ have respective slopes of $-(1-\eta^n)$ and $(1-\eta^n)$, $u_1=(1,0)$, and $u_n$ converges to $(0,0)$ as $n \rightarrow \infty$.
\vspace{-1em}

\begin{center}
\begin{figure}[H]
\centering
\includegraphics[width=10.5cm]{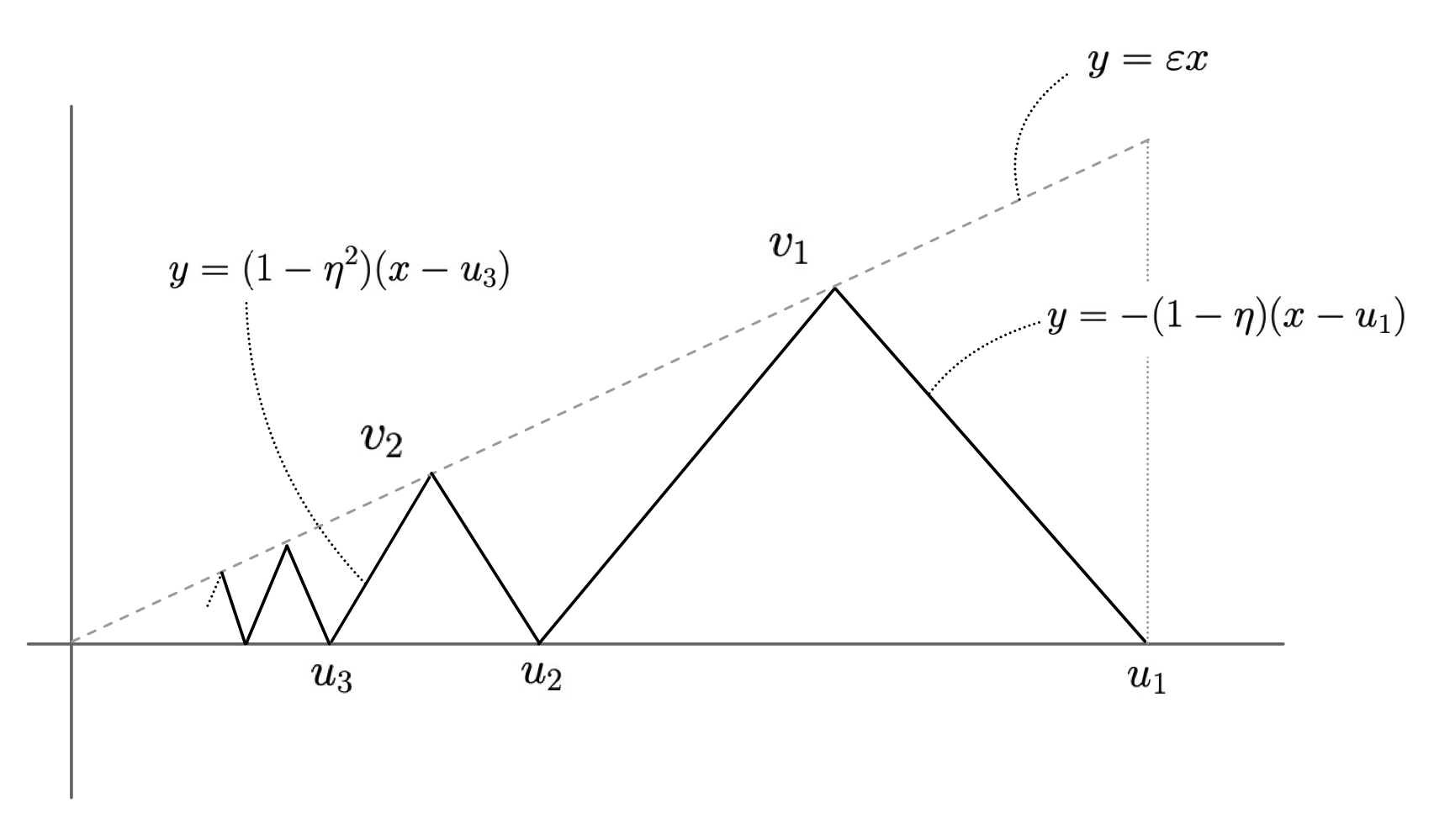}
\caption{The graph of the function $g_{\varepsilon,\eta} \in \Lip([0,1])$ }
\label{figure}
\end{figure}
\end{center}
\vspace{-2em}

\begin{proposition}\label{prop:ldira-nopna-ell-infty}
For any real Banach spaces $X$, we have
\[
(\ldira(X)\setminus \pna(X)) \cup \{0\} \text{ isometrically contains $\ell_\infty$.}
\]
\end{proposition}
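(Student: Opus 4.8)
The plan is to first reduce to the case $X=\mathbb{R}$: exactly as explained before Remark \ref{remark:der-spaceble} (following \cite[Lemma 6.5]{CJLR2023}), it suffices to produce an isometric copy of $\ell_\infty$ inside $(\ldira(\mathbb{R})\setminus\pna(\mathbb{R}))\cup\{0\}$ all of whose members are supported in $[0,+\infty)$, and then transport it to $\lip(X)$ via $\widetilde{f}(x)=f(\|x\|)$. So I will work on $\mathbb{R}$. The basic building block is the function $g_{\varepsilon,\eta}\in\lip([0,1])$ recalled before the statement (from \cite[Proposition 2.8]{CJLR2023}): it has norm one, vanishes at both endpoints, satisfies $|g_{\varepsilon,\eta}(x)|\le\min\{x,1-x\}$ (since its slopes lie in $(-1,1)$), and crucially it lies in $\ldira([0,1])\setminus\pna([0,1])$. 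Indeed $\sup_{q\neq p}|S(g_{\varepsilon,\eta},p,q)|<1$ for every fixed $p$, so it does not attain its pointwise norm; on the other hand the slopes $1-\eta^n$ of its successive cones tend to $1$ while the corresponding points tend to $0$, so $g_{\varepsilon,\eta}$ attains its norm locally directionally at $0$.

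Next I would fix the disjoint intervals $I_n=[4^{-n},2\cdot 4^{-n}]\subseteq(0,1]$, which accumulate at $0$ and whose gaps are comparable to their lengths, and on each $I_n$ I place the affine, norm-one copy $g_n$ of $g_{\varepsilon,\eta}$ (extended by $0$ outside $I_n$); note that this rescaling preserves slopes, hence the norm, the local-directional attainment, and the non-attainment of the pointwise norm. For $a=(a_n)_n\in\ell_\infty$ put $f^{(a)}:=\sum_n a_n g_n$ (pointwise limit; the supports are disjoint). The identity $\|f^{(a)}\|=\|a\|_\infty$ and the fact that $f^{(a)}\notin\pna(\mathbb{R})$ will both follow from a tent-domination estimate in the spirit of Lemma \ref{lemma:new-extension-lemma-[0,1]}: since $|f^{(a)}(x)|\le\|a\|_\infty\,\operatorname{dist}(x,\partial I_{n(x)})$ on each $I_{n(x)}$ and $f^{(a)}\equiv 0$ elsewhere, for $p\in I_n$ and $q\in I_m$ with $n\neq m$ one gets
\[
|S(f^{(a)},p,q)|\le\|a\|_\infty\,\frac{|p-q|-\operatorname{gap}(I_n,I_m)}{|p-q|},
\]
and the geometric choice of the $I_n$ guarantees $\operatorname{gap}(I_n,I_m)\ge\tfrac14|p-q|$, so that all cross-interval (and boundary) slopes are at most $\tfrac34\|a\|_\infty$. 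Within a single $I_n$ the slopes are bounded by $|a_n|$, with supremum $|a_n|$, so $\|f^{(a)}\|=\sup_n|a_n|=\|a\|_\infty$; and for each fixed $p$ the supremum of $|S(f^{(a)},p,\cdot)|$ is strictly below $\|a\|_\infty$ (within its own interval by the pointwise estimate for $g_{\varepsilon,\eta}$, across intervals by the $\tfrac34$ bound), whence $f^{(a)}\notin\pna(\mathbb{R})$ for $a\neq 0$.

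The heart of the argument, and the step I expect to be the main obstacle, is to check that $f^{(a)}\in\ldira(\mathbb{R})$ for \emph{every} $a\neq0$ — including the delicate case in which $\|a\|_\infty=\sup_n|a_n|$ is attained by no single coordinate. This is precisely why the intervals are forced to accumulate at a finite point: were they spread out, then for such an $a$ the norming pairs living in $I_n$ would escape to infinity and no local-directional limit could form. Concretely, given $a\neq0$ I would choose indices $n_k$ with $|a_{n_k}|\to\|a\|_\infty$ and, using the local-directional attainment of the copy $g_{n_k}$ inside $I_{n_k}$, select pairs $(p_k,q_k)\in\widetilde{I_{n_k}}$ lying within $1/k$ of the norming endpoint $\alpha_{n_k}=4^{-n_k}$ and with $|S(f^{(a)},p_k,q_k)|=|a_{n_k}|\,|S(g_{n_k},p_k,q_k)|\ge|a_{n_k}|-\tfrac1k$. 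Then $|S(f^{(a)},p_k,q_k)|\to\|a\|_\infty$, and after passing to a subsequence so that $\{n_k\}$ is either constant or tends to $\infty$, the pairs $(p_k,q_k)$ converge (to $\alpha_{n_0}$ in the first case, to $0$ in the second), the directions $\frac{p_k-q_k}{|p_k-q_k|}\in\{-1,1\}$ stabilize to some $u$, and $S(f^{(a)},p_k,q_k)\to z$ with $|z|=\|a\|_\infty$. Thus $f^{(a)}$ attains its norm locally directionally, and $\{f^{(a)}:a\in\ell_\infty\}$ is the desired isometric copy of $\ell_\infty$ inside $(\ldira(\mathbb{R})\setminus\pna(\mathbb{R}))\cup\{0\}$, from which the vector-valued statement follows by the reduction above.
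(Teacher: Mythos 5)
Your proof is correct, but it follows a genuinely different construction from the paper's. The paper keeps a \emph{single} copy of $g_{\varepsilon,\eta}$ on $[0,1]$ and splits its cones among the basis functions: using the increasing primes $\{s_n\}_n$, the function $h_n$ consists of the cones of $g_{\varepsilon,\eta}$ indexed by the prime powers $s_n^m$, $m\in\mathbb{N}$ (and vanishes elsewhere), so the supports of the $h_n$ are interleaved inside $[0,1]$, each $h_n$ inherits cone slopes $1-\eta^{s_n^m}\to 1$ accumulating at $0$, and non-attainment of the pointwise norm is read off directly from the cone structure ($\varepsilon$-bound at $0$, $(1-\eta^{n+1})$-bound on each $[p_n,p_{n+1}]$, $(1-\eta)$-bound outside $[0,1]$). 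You instead take countably many \emph{disjoint rescaled copies} of the whole function $g_{\varepsilon,\eta}$, placed on geometrically shrinking intervals $I_n=[4^{-n},2\cdot 4^{-n}]$, and control cross-interval slopes by a gap/tent-domination estimate in the spirit of Lemma \ref{lemma:new-extension-lemma-[0,1]}. Both arguments hinge on the same key insight, which you correctly isolate as the crux: the supports must accumulate at a single finite point, so that for $a\in\ell_\infty$ whose supremum is attained by no coordinate, the almost-norming pairs from $I_{n_k}$ (or from the cones $s_{n_k}^{m_k}$) converge to $0$ and yield local directional attainment there; your two-case analysis (constant subsequence of $\{n_k\}$ versus $n_k\to\infty$) matches the paper's implicit argument. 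What the paper's interleaving buys is uniformity: a single cone structure on $[0,1]$ gives clean, global non-$\pna$ bounds, and it reuses the prime-power indexing already deployed in Lemma \ref{lemma:special-c0-in-ell-infty}. What your version buys is modularity: each basis vector is itself a canonical member of $\ldira(\mathbb{R})\setminus\pna(\mathbb{R})$, the verification reduces to elementary gap estimates (your $\tfrac34\|a\|_\infty$ bound is correct, since consecutive gaps have length $4^{-n}/2$ while cross-interval distances are at most $2\cdot 4^{-n}$), and the scheme visibly generalizes to any sequence of disjoint blocks shrinking to a point. The only expository gaps are the slopes from points lying in the gaps between intervals (and from $0$ itself), which you subsume under ``boundary slopes''; these are routine to bound strictly below $\|a\|_\infty$ for each fixed $p$, at the same level of detail the paper itself leaves to the reader.
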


\begin{proof}
We shall use the function $g = g_{\varepsilon,\eta} \in \lip ([0,1])$ in Figure \ref{figure} for some $0<\varepsilon,\eta<1$. 
Let $\{s_n\}_{n=1}^\infty$ be the sequence of increasing prime numbers. For each $n \in \mathbb{N}$, consider $h_n \in \lip(\bbr)$ whose graph is the union of line segments $[u_{s_n^m}, v_{s_n^m}]$ and $[v_{s_n^m}, u_{s_n^m +1}]$ for all $m \in \mathbb{N}$, and $x$-axis otherwise. Given $a=(a_n)_n \in \ell_\infty$, define $h^{(a)}$ to be the pointwise limit of $\sum_{n=1}^\infty a_n h_n$. Note that $\|h^{(a)}\|=\|a\|_\infty$ arguing as in Lemma \ref{lemma:new-extension-lemma-[0,1]}. Fix $a\in \ell_\infty\setminus \{0\}$. As the support of $h^{(a)}$ is contained in $[0,1]$, it is clear that that $h^{(a)} \in \ldira (\bbr)$ (see for instance the argument used in \cite[Proposition 6.7]{CJLR2023}). It remains to check that $h^{(a)} \not\in \pna(\bbr)$. First, note that if $p\notin [0,1]$, then 
\begin{align*}
\sup_{q\neq p} |S(h^{(a)}, p, q)| &\leq \max \left\{ \sup_{q\neq p} |S(h^{(a)}, 0, q)|,\, \sup_{q\neq p} |S(h^{(a)}, 1, q)| \right\}\\
&\leq \max\{\varepsilon \|a\|_\infty,\, (1-\eta)\|a\|_\infty\}<\|a\|_\infty.
\end{align*}
Moreover, if for each $k\in\bbn$ we denote $p_k$ to the first coordinate of $u_k$, it is clear that if $p\in [p_n, p_{n+1}]$ for some $n\in\bbn$, then 
$$\sup_{q\neq p} |S(h^{(a)}, p, q)| \leq (1-\eta^{n+1}) \|a\|_\infty <\|a\|_\infty.$$
Finally, by construction, as in \cite[Proposition 2.8]{CJLR2023},
$$\sup_{q\neq 0} |S(h^{(a)}, 0, q)| \leq \varepsilon \|a\|_\infty <\|a\|_\infty.$$
Thus, $h^{(a)}\notin \pna(\bbr)$. Therefore, $(\ldira(\bbr)\setminus \pna(\bbr)) \cup \{0\}$ contains an isometric copy of $\ell_\infty$.
\end{proof}

\begin{proposition}\label{prop:pna-noldira-ell-infty}
For any real Banach space $X$, we have
\[
(\pna(X)\setminus \ldira(X)) \cup \{0\} \text{ isometrically contains $\ell_\infty$.}
\]
\end{proposition}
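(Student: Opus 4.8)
The plan is to reduce, as for the previous propositions in this section, to the case $X=\bbr$ and to exhibit a linear isometric copy of $\ell_\infty$ inside $(\pna(\bbr)\setminus\ldira(\bbr))\cup\{0\}$ all of whose members are supported in $[0,+\infty)$; the general case then follows by the radialization $\widetilde f(x)=f(\|x\|)$ exactly as in \cite[Lemma 6.5]{CJLR2023}. The conceptual point dictating the construction is the following dichotomy on the line: if $f\in\pna(\bbr)$ realizes its pointwise norm at $p$ through a sequence $(q_k)$, then either $(q_k)$ has a bounded subsequence — in which case, by properness of $\bbr$, $f$ either strongly attains its norm or attains it locally directionally at $p$ (the directions $\tfrac{p-q_k}{|p-q_k|}\in\{-1,1\}$ converge along a subsequence), so that $f\in\ldira(\bbr)$ in any event — or else $|q_k|\to\infty$. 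Hence the only room for a function in $\pna(\bbr)\setminus\ldira(\bbr)$ is that its pointwise norm be visible \emph{only at infinity}, while all of its local slopes remain uniformly below the norm on every bounded set. I would build the whole copy of $\ell_\infty$ around this mechanism, working through the isometric identification $f\mapsto f'$ of $\lip(\bbr)$ with $L_\infty(\bbr)$ (cf.\ \cite[Example 3.11]{Weaver18}).

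Concretely, I would fix a rapidly increasing sequence of scales $0=T_0<T_1<T_2<\cdots$ with $T_{j-1}/T_j\to 0$, and a schedule $\kappa:\bbn\to\bbn$ that is surjective and takes each value infinitely often. For $a=(a_n)_n\in\ell_\infty$ I then set, with $z^{(a)}$ the element furnished by Lemma \ref{lemma:special-c0-in-ell-infty},
\[
G^{(a)}:=\sum_{j=1}^\infty z^{(a)}(\kappa(j))\,\chi_{(T_{j-1},T_j]}\in L_\infty(\bbr),\qquad f^{(a)}(x):=\int_0^x G^{(a)},
\]
so that $\supp(f^{(a)})\subseteq[0,+\infty)$. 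Since $\kappa$ is surjective and $|z^{(a)}(k)|<\|a\|_\infty$ for every $k$ while $\sup_k|z^{(a)}(k)|=\|a\|_\infty$, the map $a\mapsto z^{(a)}\mapsto G^{(a)}$ is linear with $\|G^{(a)}\|_\infty=\sup_k|z^{(a)}(k)|=\|a\|_\infty$; hence $a\mapsto f^{(a)}$ is a linear isometry of $\ell_\infty$ into $\lip(\bbr)$. The decisive feature inherited from Lemma \ref{lemma:special-c0-in-ell-infty} is that no value taken by $G^{(a)}$ ever equals $\|a\|_\infty$.

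To verify the two membership claims for $a\neq 0$, I would first test pointwise norm attainment at the base point $0$: evaluating at $q=T_j$ gives
\[
S(f^{(a)},0,T_j)=\frac{1}{T_j}\int_0^{T_j}G^{(a)}=z^{(a)}(\kappa(j))+O\!\left(T_{j-1}/T_j\right),
\]
because the last interval $(T_{j-1},T_j]$ fills a fraction $1-T_{j-1}/T_j\to 1$ of $[0,T_j]$. Given $\eps>0$, choosing $k_0$ with $|z^{(a)}(k_0)|>\|a\|_\infty-\eps$ and then $j$ large with $\kappa(j)=k_0$ yields $\sup_q|S(f^{(a)},0,q)|=\|a\|_\infty=\|f^{(a)}\|$, so $f^{(a)}\in\pna(\bbr)$. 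For the failure of local directional attainment I would use that $G^{(a)}$ is piecewise constant and that only finitely many intervals $(T_{j-1},T_j]$ meet any bounded set $[0,N]$; their finitely many values are each strictly below $\|a\|_\infty$ in modulus, so there is $c<\|a\|_\infty$ with $|G^{(a)}|\le c$ on $[0,N]$. Hence every slope $S(f^{(a)},p,q)$ with $p,q$ in a small neighbourhood of a fixed $\bar x$ is bounded by $c<\|a\|_\infty$, which precludes any sequence of pairs shrinking to $\bar x$ with slopes tending to $\pm\|a\|_\infty$; thus $f^{(a)}\notin\ldira(\bbr)$. The radialization then transfers both properties to an arbitrary $X$.

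The main obstacle is precisely the tension flagged in the first paragraph: any naive attempt to make $\ell_\infty$-many functions attain their pointwise norm hands one either strong norm attainment or the local attainment of $\ldira$ for free, so the design must force the pointwise norm to manifest only at infinity. The two ingredients that defeat this are the super-geometric growth of the scales $T_j$ — so that each coordinate in turn controls the running average $\tfrac1{T_j}\int_0^{T_j}$ at arbitrarily large scales, giving $\pna$ — and the strict-inequality property of Lemma \ref{lemma:special-c0-in-ell-infty} — so that no local slope ever reaches the norm, ruling out $\ldira$. Maintaining an exact isometry while securing both simultaneously is exactly what the requirement that $\kappa$ be surjective and hit each index infinitely often guarantees.
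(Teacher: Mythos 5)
Your proof is correct, and while it rests on the same underlying mechanism as the paper's — the pointwise norm is made visible from the base point $0$ only through witnesses escaping to infinity at super-geometrically separated scales, while every slope computed inside a bounded set stays strictly below the norm, which kills membership in $\ldira(\bbr)$ — the implementation is genuinely different. The paper builds, for each coordinate $n$, a function $f_n=\sum_k(1-2^{-k})g_{s_n^k}$ out of tent functions $g_m$ supported on disjoint blocks $[2^{(m-1)^2},2^{m^2}]$, with prime powers $s_n^k$ forcing disjointness of supports across coordinates; the strict factors $(1-2^{-k})$ re-implement inside each $f_n$ the ``no coordinate of maximum modulus'' feature of Lemma \ref{lemma:special-c0-in-ell-infty}, the equality $\|f^{(a)}\|=\|a\|_\infty$ is checked via slope estimates as in Lemma \ref{lemma:new-extension-lemma-[0,1]}, and the $\pna$ verification requires a two-case analysis (with a diagonal subsequence) according to whether some $|a_n|$ equals $\|a\|_\infty$. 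You instead work on the derivative side through the identification of $\lip(\bbr)$ with $L_\infty(\bbr)$, interleave the coordinates in time with a schedule $\kappa$ that revisits every index infinitely often, and invoke Lemma \ref{lemma:special-c0-in-ell-infty} as a black box. This buys three simplifications: the isometry $a\mapsto f^{(a)}$ is immediate, since $\|G^{(a)}\|_\infty=\|z^{(a)}\|_\infty=\|a\|_\infty$ by surjectivity of $\kappa$, with no geometric slope estimates; the failure of $\ldira$ reduces to the remark that on any bounded set $G^{(a)}$ takes finitely many values, each of modulus strictly below $\|a\|_\infty$, so all local slopes are bounded away from the norm; and $\pna$ needs no case distinction, because $\sup_k|z^{(a)}(k)|=\|a\|_\infty$ is approached by single coordinates $k_0$ that the schedule realizes at arbitrarily large scales, where your running-average estimate $S(f^{(a)},0,T_j)=z^{(a)}(\kappa(j))+O(T_{j-1}/T_j)$ applies. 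Both proofs rely identically on the radialization $\widetilde f(x)=f(\|x\|)$ of \cite[Lemma 6.5]{CJLR2023} to pass from $\bbr$ to a general real Banach space $X$, and your support condition $\supp(f^{(a)})\subseteq[0,+\infty)$ is satisfied by construction, so that step is legitimate.
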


\begin{proof}
Define for each $n \in \N$ the function $g_n\in\lip(\bbr)$ given by
$$
g_n(x) := \max \left\{ 0, \frac{2^{n^2} - 2^{(n-1)^2}}{2} - \left| x - \frac{2^{n^2} + 2^{(n-1)^2}}{2} \right| \right\} \quad \text{for }x\in\bbr.
$$
Let $\{s_n\}_n$ denote the sequence of increasing prime numbers, and for each $n\in\bbn$ define
$$
f_n(x) := \sum_{k=1}^\infty \left( 1 - \frac{1}{2^k} \right) g_{s_n^k} (x).
$$
Note that for all $(m,n)\in\widetilde{\mathbb N}$, $\supp(f_n)\cap \supp(f_m)=\emptyset$. For each $a = (a_n)_n \in \ell_\infty$, define $f^{(a)} = \sum_{n=1}^\infty a_n f_n $ by the pointwise limit. Note that $\|f^{(a)}\| = \|a\|$ for every $a \in \ell_\infty$ arguing as in Lemma \ref{lemma:new-extension-lemma-[0,1]}. Fix $a\in \ell_\infty\setminus \{0\}$. It is not difficult to check that $f^{(a)}\notin \ldira(\bbr)$, similar to with Proposition \ref{prop:lip-nopna-noldira-ell-infty}. On the other hand, we will show that $f^{(a)}\in\pna(M)$. If there exists some $n\in\bbn$ such that $|a_{n}|=\|a\|_\infty$, then we have
\begin{align*}
\sup_{q\neq 0} |S(f^{(a)}, 0, q)|&\geq \lim_{k\to\infty} \left|S\left(f^{(a)}, 0, \frac{2^{(s_n^k)^2} + 2^{(s_n^k -1)^2}}{2}\right)\right|\\
&=\lim_{k\to\infty} |a_n| \left( 1 - \frac{1}{2^k} \right) \frac{2^{(s_n^k)^2} - 2^{(s_n^k -1)^2}}{2^{(s_n^k)^2} + 2^{(s_n^k -1)^2}}=\|a\|_\infty.
\end{align*}
Otherwise, there exists a subsequence of $\bbn$, $\{n_k\}_{k=1}^{\infty}$, such that $|a_{n_k}|$ converges to $\|a\|_\infty$. In this case, we have
\begin{align*}
\sup_{q\neq 0} |S(f^{(a)}, 0, q)|&\geq \lim_{k\to\infty} \left|S\left(f^{(a)}, 0, \frac{2^{(s_{n_k}^k)^2} + 2^{(s_{n_k}^k -1)^2}}{2}\right)\right|\\
&=\lim_{k\to\infty} |a_{n_k}| \left( 1 - \frac{1}{2^k} \right) \frac{2^{(s_{n_k}^k)^2} - 2^{(s_{n_k}^k -1)^2}}{2^{(s_{n_k}^k)^2} + 2^{(s_{n_k}^k -1)^2}}=\|a\|_\infty.
\end{align*}
Therefore, $(\pna(\bbr)\setminus \ldira(\bbr)) \cup \{0\}$ contains an isometric copy of $\ell_\infty$.
\end{proof}

Finally, by carefully modifying a construction from \cite[Proposition 2.7]{CJLR2023}, we also have the following.

\begin{proposition}\label{prop:pna-ldira-noder-ell-infty}
For any real Banach space $X$, we have
\[
[(\pna(X)\cap \ldira(X))\setminus \der(X)] \cup\{0\} \text{ isometrically contains $\ell_\infty$.}
\]
\end{proposition}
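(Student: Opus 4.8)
The plan is to follow the scheme of the previous three propositions: reduce to the scalar case $X=\bbr$ and assemble the isometric copy of $\ell_\infty$ from disjointly supported blocks in $\lip(\bbr)$ supported in $[0,+\infty)$, after which the general Banach-space statement follows from the composition $\widetilde f(x)=f(\|x\|)$ discussed before Remark \ref{remark:der-spaceble}. The guiding observation is that the difference between Proposition \ref{prop:pna-noldira-ell-infty} and the present statement is a matter of \emph{where} the spikes accumulate: in Proposition \ref{prop:pna-noldira-ell-infty} the tents run off to $+\infty$, so the norm cannot be approached by nearby pairs of points and $\ldira$ fails; here I will instead let the tents cluster at a single point (say $0$), which produces $\ldira$ while preserving $\pna$ and still ruling out $\der$.

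Concretely, I would set up a prototype in the spirit of \cite[Proposition 2.7]{CJLR2023}: a sequence of pairwise disjoint tents $\{T_j\}_{j\ge1}\subseteq[0,1]$, where $T_j$ has slopes $\pm1$, peak abscissa $c_j\downarrow0$ and peak height $h_j$, arranged so that the tents are disjoint and $h_j/c_j\to1$. Disjointness together with $h_j/c_j\to1$ forces the peaks to cluster at $0$ faster than geometrically (the left foot of each tent must be negligible compared with its distance to $0$), and this is exactly what lets a secant drawn from $0$ to the apex of $T_j$ have slope tending to $1$. I would then distribute the tents among countably many coordinates by the prime-power device of Lemma \ref{lemma:special-c0-in-ell-infty}: with $\{s_n\}_n$ the increasing primes, assign the slot $T_{s_n^k}$ to coordinate $n$ with weight $1-2^{-k}$, and set
\[
f_n:=\sum_{k=1}^{\infty}\bigl(1-2^{-k}\bigr)\,T_{s_n^{k}},\qquad f^{(a)}:=\sum_{n=1}^{\infty}a_n f_n\ \ (a=(a_n)_n\in\ell_\infty),
\]
the sums being pointwise limits. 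Since the index sets $\{s_n^k:k\in\bbn\}$ are pairwise disjoint, the $f_n$ have disjoint supports and norm one, and all tents of all coordinates cluster at the common point $0$. The identity $\|f^{(a)}\|=\|a\|_\infty$ is then obtained exactly as in Lemma \ref{lemma:new-extension-lemma-[0,1]}: inside a single tent the slope equals $a_n(1-2^{-k})(\pm1)$, of modulus $|a_n|(1-2^{-k})<\|a\|_\infty$, while secants joining points of two different tents are controlled by the separation of the slots.

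It then remains to verify the three memberships for every $a\ne0$. For $\pna$ I would argue as in Proposition \ref{prop:pna-noldira-ell-infty}, splitting into the case $\|a\|_\infty=|a_{n}|$ for some $n$ (take secants from $0$ to the apexes of the slots $T_{s_n^{k}}$, whose slopes tend to $\|a\|_\infty$ because $1-2^{-k}\to1$ and $h_{s_n^k}/c_{s_n^k}\to1$) and the case where only a subsequence $|a_{n_k}|\to\|a\|_\infty$ (use the diagonal slots $T_{s_{n_k}^{k}}$). For $\ldira$, points on the rising edge of the slot $T_{s_n^{k}}$ (resp.\ $T_{s_{n_k}^{k}}$) give secant slopes tending to $\|a\|_\infty$ while the points tend to $0$ and the directions are constantly $+1$, so the norm is attained locally directionally at $0$. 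For $\der$, at $0$ the difference quotient oscillates between $0$ (reached at the feet of the tents, where $f^{(a)}$ vanishes) and values tending to $\|a\|_\infty$ (reached along the apexes), so no directional derivative at $0$ can have modulus $\|a\|_\infty$; at an interior edge point the derivative exists but equals $a_n(1-2^{-k})(\pm1)$, of modulus $|a_n|(1-2^{-k})<\|a\|_\infty$; and at the corner points of the tents the derivative does not exist. Hence $f^{(a)}\notin\der$.

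The main obstacle is precisely this design tension: the blocks must \emph{reach} the norm in the limit (indispensable for $\pna$ and $\ldira$) while \emph{staying below} it at every single point (indispensable to avoid $\der$). Geometrically this is the requirement that disjoint tents clustering at $0$ have secant-to-apex slopes tending to $1$, yet every edge slope strictly below the norm. The resolution is the weight $1-2^{-k}<1$, which keeps each edge slope strictly smaller than $\|a\|_\infty$ and yet tends to $1$, so that the supremum of the slopes equals $\|a\|_\infty$ without ever being realized as a genuine derivative.
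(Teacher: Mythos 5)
Your proposal is correct and follows essentially the same route as the paper: the paper likewise takes the tent functions of \cite[Propositions 2.7, 2.8 and 6.6]{CJLR2023} (spikes clustering at $0$ with apex-secant slopes tending to $1$), applies the prime-power reweighting of Lemma \ref{lemma:special-c0-in-ell-infty} so that no single coordinate ever realizes the supremum, and then notes that the resulting functions attain their pointwise norm only at $0$, lie in $\ldira(\bbr)$ because of bounded support, and fail $\der(\bbr)$ because the right-hand derivative at $0$ does not exist. The only difference is presentational: you inline the tent construction and the $1-2^{-k}$ weighting and verify $\pna$, $\ldira$, and non-$\der$ by direct computation, where the paper cites \cite{CJLR2023} and Lemma \ref{lemma:special-c0-in-ell-infty} as black boxes and shortcuts the $\der$ exclusion via the inclusion $\der(X)\subseteq\pna(X)$.
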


\begin{proof}
It was shown in \cite[Propositions 2.8 and 6.6]{CJLR2023} that $\pna(\bbr)$ isometrically contains $\ell_\infty$ with canonical vectors isometrically given as the functions
$$f_n(x):=\max\left\{0,\, \frac{2^{2n-1}-1}{2^{n^2+1}} - \left| x-\frac{2^{2n-1}+1}{2^{n^2+1}} \right|\right\}\quad \text{for }x\in \bbr.$$
It was also implicitly shown that given $a=(a_n)_{n}\in\ell_\infty$, if $f^{(a)}$ denotes the pointwise limit of $\sum_n a_n f_n$, if there is some $n_0\in\bbn$ such that $|a_{n_0}|=\|a\|_\infty$, then $f^{(a)}$ attains its pointwise norm wherever $f_{n_0}$ does, and otherwise $f^{(a)}$ can only attain its pointwise norm at the point $x=0$. We apply Lemma \ref{lemma:special-c0-in-ell-infty} to this copy of $\ell_\infty$ to find a new space $Z$ isometrically isomorphic to $\ell_\infty$ and also isometrically contained in $\pna(\bbr)$ such that every function in $Z$ only attains its pointwise norm at $0$. Moreover, note that for all $f\in Z$, $\supp(f)$ is bounded, so $f\in\ldira(\bbr)$. Finally, let $f\in Z\setminus \{0\}$, and let $a=(a_n)_n\in\ell_\infty$ be the element such that $f=f^{(a)}$. If $f\in\der(\bbr)$, then the only point at which $f$ can attain its norm through derivatives is $x=0$, as that is the only point where $f$ attains its pointwise norm. However, the right-hand derivative of $f$ does not even exist at $0$. 
\end{proof}

\textbf{Acknowledgement}. \\
The authors are grateful to Sheldon Dantas, Vladimir Kadets, Daniel L. Rodr\'{\i}guez-Vidanes and Andres Quilis for fruitful conversations on the topic of the paper. Mingu Jung was supported by KIAS Individual Grants (MG086601, HP086601) at Korea Institute for Advanced Study and June E Huh Center for Mathematical Challenges.
Han Ju Lee and \'Oscar Rold\'an are supported by Basic Science Research Program, National Research Foundation of Korea (NRF), Ministry of Education, Science and Technology [NRF-2020R1A2C1A01010377]. \'Oscar Rold\'an is also supported by the Spanish project PID2021-122126NB-C33/MCIN/AEI/10.13039/501100011033 (FEDER).

\end{document}